\documentclass{article}

\usepackage{arxiv}

\usepackage{natbib}

\usepackage{tikz}
\usepackage{pgfplots}
\usepackage{pgfplotstable}
\pgfplotsset{compat=1.7}
\usepgfplotslibrary{groupplots}


\usepackage[utf8]{inputenc} 
\usepackage[T1]{fontenc}    
\usepackage{hyperref}       
\usepackage{url}            
\hypersetup{colorlinks,
            linkcolor=blue,
            citecolor=blue,
            urlcolor=magenta,
            linktocpage,
            plainpages=false}
\usepackage{booktabs}       
\usepackage{amsfonts}       
\usepackage{amsmath} 
\usepackage{pifont} 
\usepackage{dsfont}
\usepackage{mathrsfs}
\usepackage{nicefrac}       
\usepackage{microtype}      
\usepackage{xcolor}         
\usepackage{amssymb}

\usepackage{comment}

\usepackage{bm}
\usepackage[justification=justified]{caption}
\usepackage{subcaption} 
\usepackage{graphicx}
\usepackage{tikz}
\usepackage{float}
\usepackage{enumitem}
\usepackage{wrapfig}
\usepackage{array}

\usepackage{multirow}



\newcommand{\X}{\mathcal{X}} 
\newcommand{\ind}[1]{\mathds{1}_{ #1 }}

\newcommand{\sumT}{\sum_{t=1}^T} 
\newcommand{\T}{\mathcal{T}} 
\newcommand{\Pcal}{\mathcal{P}} 
 

\newcommand{\Reg}{\operatorname{Reg}}

\newcommand{\w}{\mathbf{w}}
\newcommand{\g}{\mathbf{g}}

\renewcommand{\l}{\mathrm{l}}

\renewcommand{\a}{\mathbf{a}}

\newcommand{\D}{\mathcal{D}}

\newcommand{\argmin}{\operatornamewithlimits{arg \, min}}
\renewcommand{\L}{\mathcal{L}}

\renewcommand{\a}{\mathbf{a}}

\newcommand{\R}{\mathbb{R}}

\newcommand{\C}{\mathscr C}

\newcommand{\N}{\mathcal{N}}
\newcommand{\alphabold}{\boldsymbol{\alpha}}
\newcommand{\betabold}{\boldsymbol{\beta}}

\renewcommand{\le}{\leqslant}
\renewcommand{\leq}{\leqslant}

\renewcommand{\epsilon}{\varepsilon}
\renewcommand{\ge}{\geqslant}
\renewcommand{\geq}{\geqslant}

\usepackage{tikz}
\usetikzlibrary{trees}
\usepackage{forest}

\newcommand{\nosemic}{\renewcommand{\@endalgocfline}{\relax}}
\makeatother
\usepackage[algoruled,linesnumbered, vlined]{algorithm2e}
\SetKwComment{Comment}{\# }{}

\SetCommentSty{mycommfont}

\usepackage{amsthm}
\newtheorem{theorem}{Theorem}
\newtheorem{prop}{Proposition}
\newtheorem{cor}{Corollary}
\newtheorem{assumption}{Assumption}

\newtheorem{defi}{Definition}

\makeatletter 
\renewenvironment{proof}[1][{\bfseries \proofname}]{\par \pushQED{\qed}%
\normalfont \topsep6\p@\@plus6\p@\relax \trivlist \item\relax {\itshape \bfseries #1\@addpunct{.}}\hspace\labelsep\ignorespaces }{%
\popQED\endtrivlist\@endpefalse } \makeatother




\usepackage[colorinlistoftodos,textwidth=2cm]{todonotes}

\usepackage[normalem]{ulem}

\DeclareMathSizes{10}{9}{6}{5}

\setlength{\parskip}{0.5em}

\usepackage[utf8]{inputenc} 
\usepackage[T1]{fontenc}    
\usepackage{hyperref}       
\usepackage{url}            
\usepackage{booktabs}       
\usepackage{amsfonts}       
\usepackage{nicefrac}       
\usepackage{microtype}      
\usepackage{xcolor}         

\usetikzlibrary{decorations.pathmorphing}
\usetikzlibrary{positioning, arrows.meta, shapes}

\title{Minimax Adaptive Online Nonparametric Regression over Besov spaces}

%

\author{%
  Paul Liautaud \\
  Sorbonne Université, CNRS, LPSM \\
  F-75005 Paris, France \\
  \texttt{paul.liautaud@sorbonne-universite.fr} \\
  \And
  Pierre Gaillard \\
  Université Grenoble Alpes, Inria \\
  CNRS, Grenoble INP, LJK \\
  38000 Grenoble, France \\
  \texttt{pierre.gaillard@inria.fr} \\
  \And
  Olivier Wintenberger \\
  Sorbonne Université, CNRS, LPSM \\
  F-75005 Paris, France \\
  Institut Pauli, CNRS and University of Vienna \\
  Oskar-Morgenstern-Platz 1, 1090 Wien, Austria \\
  \texttt{olivier.wintenberger@sorbonne-universite.fr}
}

\begin{document}

\maketitle

\begin{abstract}
We study online adversarial regression with convex losses against a rich class of continuous yet highly irregular prediction rules, modeled by Besov spaces $B_{pq}^s$ with general parameters $1 \leq p,q \leq \infty$ and smoothness $s > \tfrac{d}{p}$. 
We introduce an adaptive
wavelet-based algorithm that performs sequential prediction without prior knowledge of $(s,p,q)$, and establish minimax-optimal regret bounds against any comparator in $B_{pq}^s$.
We further design a locally adaptive extension capable of dynamically tracking spatially inhomogeneous smoothness. This adaptive mechanism adjusts the resolution of the predictions over both time and space, yielding refined regret bounds in terms of local regularity. Consequently, in heterogeneous environments, our adaptive guarantees can significantly surpass those obtained by standard global methods.
\end{abstract}



\section{Introduction}

We consider the online regression framework \cite{cesa1999prediction,cesa2006prediction}, where inputs $x_1, \ldots, x_t,\ldots \in \X$ arrive in a stream, and the task is to sequentially predict a response $\hat f_t(x_t) \in \R$ using an online predictive algorithm $\hat f_t : \X \to \R$ based on past observations $s = 1, \dots, t-1$ and the current input $x_t$. The goal is to design a sequence of predictors $(\hat f_t)$ in the \emph{competitive approach}, i.e., with guarantees that hold uniformly over all individual (and potentially adversarial) data sequences.
Prediction accuracy is assessed over time using a sequence of convex loss functions $(\ell_t)_{t \geq 1}$, for instance $\ell_t(\hat f_t(x_t)) = |\hat f_t(x_t) - y_t|$ or $(\hat f_t(x_t) - y_t)^2$, where $y_t$ is the observed response associated with $x_t$. After $T \geq 1$ rounds, the performance of the algorithm is measured through its \emph{regret} with respect to competitive continuous functions $f$,
\begin{equation}
    \label{eq:reg}
    R_T(f) := \sum_{t=1}^{T} \ell_t(\hat f_t(x_t)) - \sum_{t=1}^{T} \ell_t(f(x_t)).
\end{equation}

Much of the early literature \cite{hazan2007online,gaillard2015chaining,cesa2017algorithmic,jezequel2019efficient} 
focuses on competitors $f$ belonging to smooth benchmark classes, such as Lipschitz or kernel-based functions. In this work, we extend this setting by designing constructive algorithms that are competitive with a much richer class of prediction rules, namely functions in general Besov spaces \cite{triebel2006theory, gine2021mathematical, vovk2007competing}. 
Building on wavelet-based representations, we design an adaptive algorithm that achieves optimal regret performance \eqref{eq:reg}, against broad classes of prediction rules, modeled by Besov spaces.
Wavelets \cite{cohen2003numerical,daubechies1992ten} are indeed a powerful and widely used tool for capturing local features and regularities in signals. Their applications range from image segmentation and change point detection to EEG analysis and financial time series.
Moreover, in many practical scenarios, the environment may exhibit spatial heterogeneity, with varying degrees of regularity across the domain. This motivates the need for methods that can adapt locally to different smoothness levels. To tackle this challenge, we develop a locally adaptive algorithm \cite{liautaud2025minimax,kuzborskij2020locally} that dynamically adjusts the resolution of its predictions over both time and space, effectively tracking inhomogeneous regularity. Our analysis provides minimax optimal regret guarantees that depend on the local smoothness of the target function, improving upon globally-tuned algorithms.

\paragraph{Wavelets and multiscale approaches.}

Classical wavelet-based methods for statistical function estimation have been primarily developed and analyzed in the batch (i.i.d.) setting, where the entire dataset is available upfront. Notable examples include the wavelet shrinkage procedure of \cite{donoho1998minimax}, which achieves near-minimax estimation rates over Besov spaces. More generally, wavelets play a central role in the signal processing and compressed sensing framework developed by \cite{mallat1999wavelet}, where they are well understood and widely applied.
In the context of adaptive and nonparametric estimation, \cite{binev2005universal,binev2007universal} introduced universal algorithms based on tree-structured approximations, closely related in spirit to wavelet thresholding. While these methods are computationally efficient and amenable to online implementation, their theoretical analysis is performed in the batch statistical learning setting and focuses on specific classes of approximation spaces.
Multiscale and chaining ideas have also emerged in the online learning literature, beginning with the early work of \cite{cesa1999prediction} and continuing more recently in \cite{rakhlin2014online} (in a non-constructive fashion) and \cite{gaillard2015chaining}, although typically without relying on explicit wavelet constructions. Recently, \cite{zhang2023unconstrained} studied an online algorithm that combines discrete wavelets with parameter-free learning to minimize dynamic regret under general convex losses. Beyond this, the combination of wavelet-based representations with principled online nonparametric learning guarantees remains largely unexplored. Our work contributes to this direction by developing an online algorithm that leverages multiscale wavelet structures with theoretical regret guarantees over large nonparametric Besov function classes.

\paragraph{Online nonparametric regression.}
A classical line of work in online regression focuses on competing with smooth benchmark functions with a given degree of smoothness $s>0$. For instance, \cite{gaillard2015chaining,cesa2017algorithmic,liautaud2025minimax} design constructive online algorithms that achieve optimal regret against Lipschitz functions $(s \leq 1)$ by using chaining-based techniques and exploiting regularity properties such as uniform continuity to build refined predictors. Much of the early literature also focused on reproducing kernel Hilbert spaces (RKHS) \citep{gammerman2004line,vovk2006line,calandriello2017second,calandriello2017efficient,jezequel2019efficient}, which correspond to the case where the smoothness index satisfies $s > d/2$ and $p = 2$. This setting offers convenient geometric properties, such as inner products and representer theorems, but it excludes many natural function classes of interest, e.g. general $L^p(\X)$ spaces with $p \ge 1$, Sobolev spaces with low smoothness $s$, or more generally Besov spaces.
A key milestone in the direction of generalizing beyond RKHS is the work \cite{vovk2007competing}, which introduces the method of defensive forecasting to compete with \emph{wild prediction rules}, i.e., rules drawn from general Banach spaces (e.g., $L^p(\X), p \geq 2$). Their framework shows that online learning is possible in highly irregular settings and provides regret bounds that depend on the geometry of the underlying Banach space. However, their analysis yields bounds that depend solely on the integrability parameter $p \geq 2$, and does not account for any additional smoothness structure that the benchmark functions may possess.
This motivates the need for online learning strategies that adapt not only to \emph{integrability}, but also to spatial \emph{regularity} or \emph{smoothness}.
Another paper in this line is \cite{zadorozhnyi2021online}, where they study the performance of Sobolev kernels on restricted classes of Sobolev spaces $W^{s}_p(\X)$ with integrability $p \geq 2$ and smoothness $s > \frac{d}{p}$.
Going one step further, our paper proposes 
an algorithm with regret guarantees against any competitor in general Besov spaces $B_{pq}^s$, for any integrability parameters $1 \leq p, q \leq \infty$ and smoothness $s > \frac{d}{p}$. This generalizes and improves upon previous methods by addressing a broad range of function spaces. More importantly, none of the constructive methods mentioned above provide the minimax optimal rate for generic Besov spaces, as established by~\cite{rakhlin2014online}.
To the best of our knowledge, we present the first constructive and adaptive algorithm that bridges wavelet theory with online nonparametric learning, while providing minimax optimal regret guarantees against functions in general Besov spaces. Table~\ref{table:rates} summarizes our contributions and the corresponding regret rates in the literature.


\captionsetup[table]{labelfont=bf, font=small}
\begin{table}[htbp]
\centering
\renewcommand{\arraystretch}{1.3}
\setlength{\tabcolsep}{10pt}
\caption{Comparison of regret rates and parameter requirements for online regression.}
\label{table:rates}
\resizebox{0.9\textwidth}{!}{
\begin{tabular}{@{} llll @{}}
\toprule
\textbf{Paper} & \textbf{Setting} ($(\ell_t)$ square losses, $s > \tfrac{d}{p}$) & \textbf{Input Parameters} & \textbf{Regret Rate} \\
\midrule
\citet{vovk2006metric}
    &  $f \in B^s_{pq}, p,q \geq 1$ & $s,p,B \geq \|f\|_\infty$ & $T^{1 - \frac{s}{s+d}}$ \\
\cmidrule(l){1-4}
\multirow{2}{*}{\citet{vovk2007competing}} 
    & $f \in B^s_{pq},\ p \geq 2, q \in [\frac{p}{p-1},p]$ & \multirow{2}{*}{$ s, p, B \geq \|f\|_\infty$} & $T^{1 - \frac{1}{p}}$ \\
    & $f \in W_{\infty}^s = \C^s,\ p = \infty, s \in [\frac{d}{2},1]$ &  & $T^{1 - \frac{s}{d} + \varepsilon}$ \\
\cmidrule(l){1-4}
\multirow{2}{*}{\citet{gaillard2015chaining}}
& $f \in W^s_p,\ p \geq 2,\ s \geq \frac{d}{2}$ & \multirow{2}{*}{$s,p, B \geq \|f\|_\infty$} & $T^{1 - \frac{2s}{2s + d}}$ \\
& $f \in W^s_p,\ p > 2,\ s < \frac{d}{2}$ & & $T^{1 - \frac{s}{d}}$  \\
\cmidrule(l){1-4}
\multirow{2}{*}{\citet{zadorozhnyi2021online}} & $f \in W^s_p,\ p \geq 2,\ s \geq \frac{d}{2}$ & \multirow{2}{*}{$s, p$} & $T^{1 - \frac{2s}{2s + d} + \varepsilon}$ \\
 & $f \in W^s_p,\ p > 2,\ s < \frac{d}{2}$ & & $T^{1 - \frac{s}{d}\frac{p - \nicefrac{d}{s}}{p - 2} + \varepsilon}$ \\
\cmidrule(l){1-4}
\multirow{2}{*}{\textbf{This work} - Alg.~\ref{alg:online_adaptive_wavelet}}
    & $f \in B^s_{pq}, p,q\geq 1, s \geq \frac{d}{2}$ or $p\leq 2$ & \multirow{2}{*}{$S \geq s, \varepsilon < s - \frac{d}{p}, B \geq \|f\|_\infty$} & $T^{1 - \frac{2s}{2s + d}}$ \\
    & $f \in B^s_{pq},p > 2,q\geq 1, s < \frac{d}{2}$ & & $T^{1 - \frac{s}{d}}$ \\
\bottomrule
\end{tabular}}
\end{table}

\paragraph{Local adaptivity in inhomogeneous smoothness regimes.}
Many real-world functions exhibit spatially varying regularity, motivating the development of locally adaptive methods. In the batch setting, \cite{donoho1994ideal} pioneered spatially adaptive wavelet estimators that adjust to unknown smoothness. Bayesian approaches such as \cite{rovckova2024ideal} further model locally Hölder functions with hierarchical priors. In a distribution-free framework, \cite{kornowski2023near,hanneke2024efficient} introduced the notion of \emph{average smoothness}, based on averaging local Hölder semi-norms at a fixed degree of regularity.  In the online setting, \cite{kuzborskij2020locally,liautaud2025minimax} developed algorithms that sequentially adapt to local smoothness across time. However, these approaches typically focus on adapting to local norms while assuming a fixed degree of regularity. In contrast, our method jointly adapts to both the local regularity norm and the local smoothness exponent, enabling a data-driven compromise that is well suited to highly inhomogeneous environments.

\paragraph{Context and notation.}  Throughout the paper, we assume the following. $\X$ denotes a compact domain of $\R^d$, $d \geq 1$. For any subset $\X' \subseteq \X$, we set its diameter as $|\X'| = \sup_{x,y \in \X'} \|x-y\|_\infty$. Without loss of generality, we assume that $\X$ is a regular hypercube of volume $|\X|^d$. We denote the horizon of time by $T \ge 1$. The sequence losses $(\ell_t)$ are assumed to be convex and $G$-Lipschitz for some $G > 0$. For any natural integer $k \in \mathbb N$, we denote $[k]:=\{0,\dots,k\}$.





\section{Background and function representation}

We consider compactly supported functions $f : \X \to \R$ that lie in $L^2(\X)$ equipped with the standard inner product $\langle f, g \rangle = \int_{\X} f(x)g(x)\,dx$. 
To design a sequential algorithm we rely on a multiscale representation of $f$ based on an orthonormal wavelet basis. For a chosen starting scale $j_0 \in \mathbb{N}$, we write:
\begin{equation}
\label{eq:wavelet_decomposition}
\textstyle
    f = \sum_{k \in \bar \Lambda_{j_0}} \alpha_{j_0,k} \phi_{j_0,k} + \sum_{j = j_0}^\infty \sum_{k \in \Lambda_j} \beta_{j,k} \psi_{j,k},
\end{equation}
where 
the families $(\phi_{j_0,k})_{k \in \bar \Lambda_{j_0}}$ and $(\psi_{j,k})_{k \in \Lambda_j,j\geq j_0}$ form an orthonormal basis of $L^2(\X)$. We now highlight the key properties of the expansion~\eqref{eq:wavelet_decomposition}, and refer the interested reader to Appendix~\ref{appendix:wavelet} for further details.

\paragraph{Scaling (coarse-scale) component.} The functions $\phi_{j_0,k}(x) := 2^{j_0d/2} \phi(2^{j_0} x - k)$ are the \emph{scaling functions} at resolution level $j_0$, constructed from a fixed \emph{father wavelet} $\phi$. 
They span 
\[
V_{j_0} := \operatorname{span} \{\phi_{j_0,k} : k \in \bar \Lambda_{j_0}\},
\]  
where the index set $\bar \Lambda_{j_0}$ satisfies $|\bar \Lambda_{j_0}| \leq \lambda 2^{j_0 d}$ for some constant $\lambda>0$. 
The corresponding coefficients $\alpha_{j_0,k} := \langle f, \phi_{j_0,k} \rangle$ are known as the \emph{scaling coefficients}. 

\paragraph{Wavelet (detail-scale) components.} The functions $\psi_{j,k}(x) := 2^{jd/2} \psi(2^j x - k)$ are the \emph{wavelet functions} at scale $j$, obtained from a fixed \emph{mother wavelet} $\psi$. Here, the multi-index $k$ encodes both spatial position and directional information in $d$ dimensions - see Appendix \ref{appendix:wavelet} for a brief summary of the tensor-product construction used to define such wavelets in dimension $d \geq 1$. The detail space at level $j$ is defined as
\[
W_j := \operatorname{span} \{\psi_{j,k} : k \in \Lambda_j\},
\]
where $\Lambda_j$ indexes the active wavelet functions whose supports intersect $\X$, and $|\Lambda_j| \leq \lambda 2^{jd}$, $j\ge j_0$, for the same constant $\lambda>0$ as above with no loss of generality. The coefficients $\beta_{j,k} := \langle f, \psi_{j,k} \rangle$ are called \emph{detail coefficients} at scale $j$.
\begin{figure}
\centering
\includegraphics[width=0.45\linewidth]{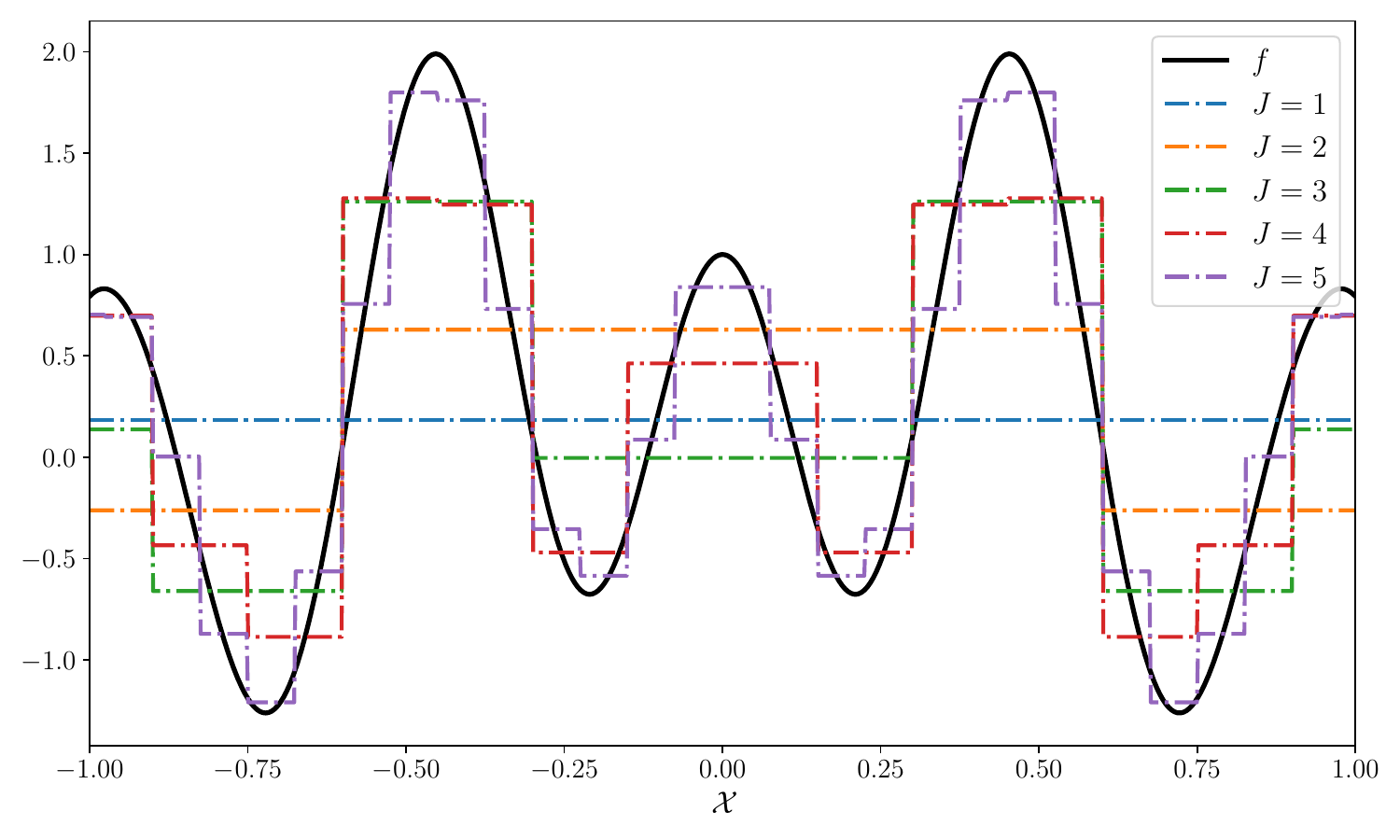} \hspace{0.1cm}
\includegraphics[width=0.45\linewidth]{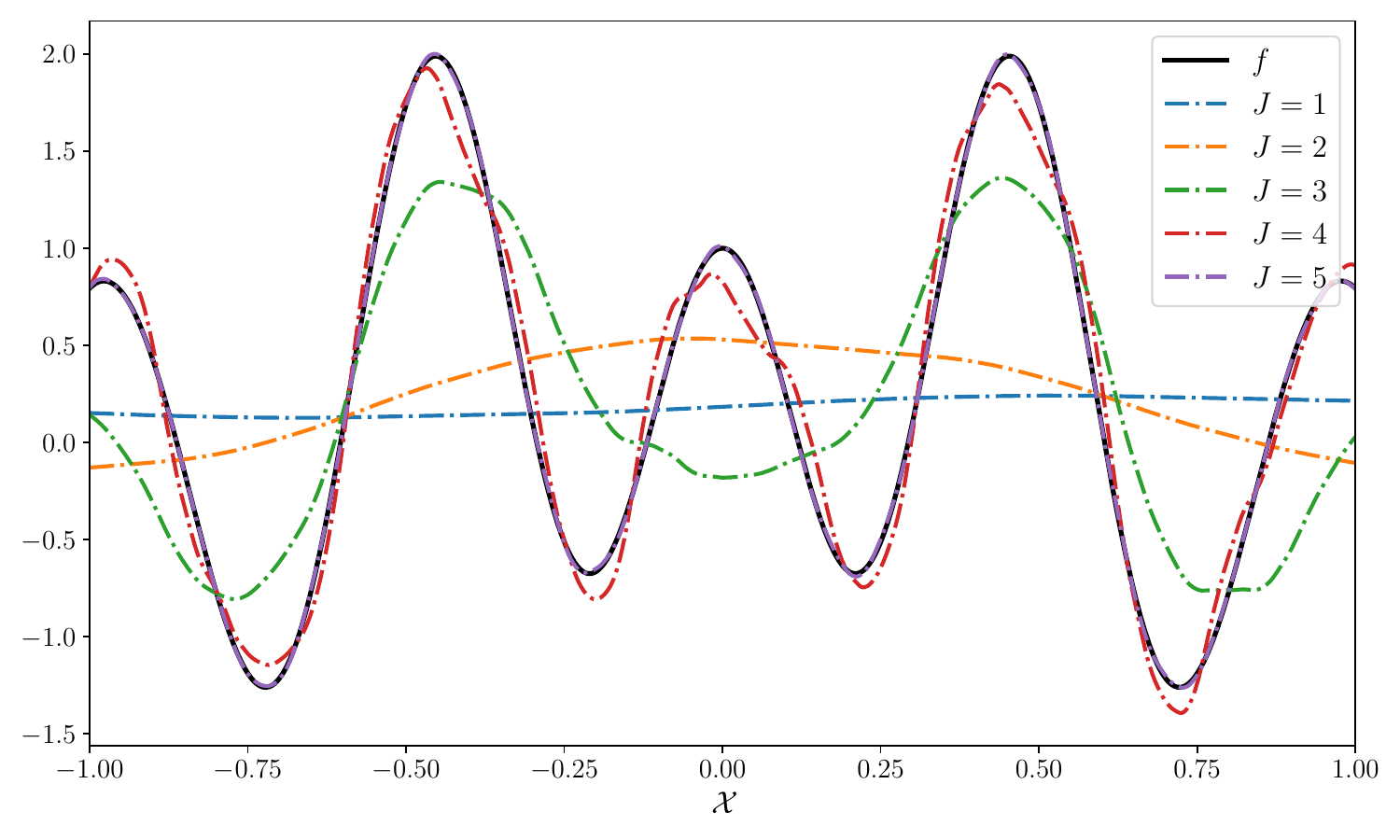}
\caption{Approximation with Daubechies wavelets of regularity $S=1$ (left) and $S=5$ (right) at levels $J = 1,\dots,5$.}
\label{fig:level_approx}
\end{figure}

Within the multiresolution analysis framework, the sequence of spaces $(V_j)_{j \in \mathbb{Z}}$ forms a nested hierarchy with $V_j \subset V_{j+1}$ and dense union in $L^2(\mathbb{R}^d)$, while the wavelet spaces $W_j$ are orthogonal complements such that $V_{j+1} = V_j \oplus W_j$. Figure~\ref{fig:level_approx} illustrates the hierarchical, stage-wise approximation process over levels $j$ that enables multiresolution analysis.
The decomposition \eqref{eq:wavelet_decomposition} is a classical result from multiresolution analysis (see \cite[Chap.~3]{hardle2012wavelets} for a deeper introduction), and holds for all functions in $L^2(\X)$ when the basis functions are derived from appropriately constructed wavelets. More generally, similar dyadic expansions can also be built from splines or piecewise polynomial systems. In this work, we focus on the wavelet setting as described above; we refer to \cite[Chap.~6]{daubechies1992ten} and \cite{cohen2003numerical} for further details.

One property of this representation \eqref{eq:wavelet_decomposition} is that it can begin at any arbitrary scale $j_0 \in \mathbb{N}$, offering flexibility to adapt the starting resolution level. In particular, we will later allow $j_0$ to be selected in a data-driven and local fashion.

Throughout the paper, we do not rely on a specific wavelet basis, but require that it satisfies the standard \emph{$S$-regularity property} for some $S \in \mathbb{N}^*$, as recalled in Definition~\ref{def:regular_wavelet} in Appendix~\ref{appendix:wavelet}. This condition notably implies compact support, smoothness, vanishing moments, and bounded overlap. Notable examples include the compactly supported orthonormal wavelets of \citet[Chap.~7]{daubechies1992ten}, and the biorthogonal, symmetric, and highly regular wavelet bases of \citet{cohen1992biorthogonal} - see Figure~\ref{fig:level_approx} for an illustration of approximation with $S$-regular Daubechies wavelets.
When working with $S$-regular wavelet bases, the expansion in \eqref{eq:wavelet_decomposition} converges not only in $L^2(\X)$ but also in other function spaces, such as $L^p(\X)$ for $p \geq 1$ (or the space of uniformly continuous functions) depending on whether $f \in L^p(\X), p\geq 1$. This broader convergence behavior is a key reason for adopting such bases. 

\paragraph{Approximation results with wavelets.}
Approximation properties of wavelet expansions are by now classical and well understood; see, e.g., \cite{devore1993constructive,cohen2003numerical,gine2021mathematical} for reviews. In particular, for functions $ f $ belonging to smoothness spaces such as Besov spaces $ B^s_{pq} $ with $ s > \tfrac{d}{p} $ (so that $f \in L^\infty(\X)$), one can construct an approximation $ \hat f$ using so called \emph{nonlinear methods} - see \cite{devore1998nonlinear}. For instance, best $N$ term approximations in a $S$-regular wavelet basis ($S>s$, see Definition \ref{def:regular_wavelet}) achieves the bound $\|f - \hat f\|_\infty \lesssim N^{-s/d}$,
where the hidden factor depends on the wavelet basis and the norm of the target function $f$. The precise construction of $\hat f$ and justification of this rate are provided in the Appendix (see the proof of Theorem~\ref{theorem:regret_besov}). These rates will serve as a benchmark in our analysis of the regret~\eqref{eq:reg}.

\section{Parameter-free online wavelet decomposition}

In this section, we develop a sequential algorithm that performs an online, parameter-free decomposition of incoming data using wavelets. The key idea is to learn wavelet coefficients incrementally - without prior knowledge of the function's regularity or the optimal resolution depth - while obtaining strong regret guarantees over broad function classes.

\subsection{Algorithm: Online Wavelet Decomposition}

Let $\{\phi_{j_0,k}, \psi_{j,k}\}$ denote an $S$-regular wavelet basis as introduced in the previous section. We consider an online predictor based on a wavelet expansion \eqref{eq:wavelet_decomposition} that begins at scale $j_0 \in \mathbb{N}$ and is truncated at level $J \geq j_0$, where the predictor at time $t \geq 1$ takes the form:
\begin{equation}
\textstyle
\label{eq:predictor}
    \hat f_t(x) = \sum_{k \in \bar \Lambda_{j_0}} \alpha_{j_0,k,t} \phi_{j_0,k}(x) + \sum_{j = j_0}^J \sum_{k \in \Lambda_j} \beta_{j,k,t} \psi_{j,k}(x),
\end{equation}
where the scaling and detail coefficients $\{\alpha_{j_0,k,t},\beta_{j,k,t}\}$ are updated sequentially over time. 

\paragraph{Online optimization of wavelet coefficients.}

Our algorithm maintains and updates the collection of scaling and detail coefficients $\{\alpha_{j_0,k,t}, \beta_{j,k,t}\}$ in \eqref{eq:predictor} across scales $j$ and positions $k$. At each round $t \geq 1$, after observing a new input $x_t$, the prediction is computed using only the coefficients and basis functions whose support intersects $x_t$. This defines the active index set at time $t$:
\begin{equation}
\label{eq:active_indices}
\textstyle
    \Gamma_t := \big\{(j_0, k) : \phi_{j_0,k}(x_t) \neq 0 \big\} \cup \bigcup_{j = j_0}^J \big\{(j, k) : \psi_{j,k}(x_t) \neq 0 \big\} \subsetneq \bar \Lambda_{j_0} \cup \bigcup_{j = j_0}^J \Lambda_j.
\end{equation}
Only the coefficients indexed by $\Gamma_t$ are updated based on gradient feedback from the loss. The full procedure is summarized in Algorithm~\ref{alg:training}.

\begin{algorithm}[h]
\caption{Online Wavelet Decomposition at time $t$}
\label{alg:training}
\SetKwInOut{Input}{Input}
\SetKwInOut{Output}{Output}
\Input{Current coefficients $\{\alpha_{j_0,k,t}, \beta_{j,k,t}\}$; active index set $\Gamma_t$ defined in \eqref{eq:active_indices}.}
Predict with coefficients in $\Gamma_t$
\[
\textstyle
\hat f_t(x_t) = \sum_{(j,k) \in \Gamma_t} c_{j,k,t} \, \varphi_{j,k}(x_t)
\]
where $\varphi_{j,k}$ stands for either $\phi_{j_0,k}$ or $\psi_{j,k}$; similarly, $c_{j,k,t} = \alpha_{j_0,k,t}$ or $\beta_{j,k,t}$\;
Receive gradients $\{g_{j,k,t}\}$ associated with the active coefficients, defined in Eq.~\eqref{eq:gradient}\;
\For{$(j,k) \in \Gamma_t$}{
    Update $c_{j,k,t}$ to $c_{j,k,t+1}$ by approximately minimizing
    \[
    c \mapsto \ell_t\big( \hat f_t(x_t) - c_{j,k,t} \varphi_{j,k}(x_t) + c\, \varphi_{j,k}(x_t) \big)
    \]
    using corresponding gradient $g_{j,k,t}$ and a parameter-free update rule satisfying Assumption~\ref{assumption:paramfree}\;
}
\Output{Updated coefficients $\{\alpha_{j_0,k,t+1}, \beta_{j,k,t+1}\}$.}
\end{algorithm}

\paragraph{Computation of gradients.}

We assume that after making its prediction, the algorithm receives first-order feedback in the form of gradients $\{g_{j,k,t}\}$ with respect to each active coefficient $c_{j,k,t}$, where $c_{j,k,t}$ denotes either a scaling coefficient $\alpha_{j_0,k,t}$ or a wavelet coefficient $\beta_{j,k,t}$. These gradients are efficiently computed using the chain rule:
\vspace{-0.1cm}
\begin{equation}
\label{eq:gradient}
\textstyle
    g_{j,k,t} = \left[\nabla_{c} \ell_t\big( \hat f_t(x_t) - c_{j,k,t} \varphi_{j,k}(x_t) + c\, \varphi_{j,k}(x_t) \big)\right]_{c = c_{j,k,t}} 
    = \ell_t'(\hat f_t(x_t)) \cdot \varphi_{j,k}(x_t),
\end{equation}
where $\varphi_{j,k}$ is either $\phi_{j_0,k}$ or $\psi_{j,k}$ depending on the scale, and $\ell_t'$ is the derivative of the loss function with respect to its prediction argument. Note that the gradient expression in~\eqref{eq:gradient} vanishes whenever the corresponding basis function satisfies $\varphi_{j,k}(x_t) = 0$. This justifies restricting the optimization step at round $t$ to the active set $\Gamma_t$ defined in~\eqref{eq:active_indices}.


\paragraph{Assumption on the gradient step.}

To analyze the regret \eqref{eq:reg} of our method, 
we assume that the update rule used 
in Algorithm~\ref{alg:training} satisfies a parameter-free regret guarantee of the following form.
\begin{assumption}[Parameter-free regret bound]
\label{assumption:paramfree}
Let $T \geq 1$ and suppose $g_1, \dots, g_T \in [-G, G]$ are the gradients observed over time. We assume that the coefficient update rule satisfies, for any $c \in \mathbb{R}$,
\[
\sum_{t=1}^T g_t (c_t - c) \leq |c - c_1| \bigg( C_1 \sqrt{\textstyle \sum_{t=1}^T |g_t|^2} + C_2 G \bigg)
\]
for some $C_1, C_2 > 0$.
\end{assumption}

This assumption holds for a broad class of first-order online learning algorithms, 
such as online mirror descent with self-tuned learning rates or coin-betting style updates \cite{orabona2016coin}. 
These are referred to as “parameter-free” algorithms, and they provide optimal adaptivity to $|c|$, 
at the expense of logarithmic factors absorbed in $C_1$ and $C_2$ (see, e.g., \cite{cutkosky2018black,mhammedi2020lipschitz}).

\subsection{Regret analysis of Online Wavelet Decomposition (Alg.~\ref{alg:training})}

In this section, we analyze the regret performance of Algorithm~\ref{alg:training} under general convex losses $(\ell_t)$, \begin{wrapfigure}{r}{0.4\textwidth}
\centering
\begin{tikzpicture}[xscale=4.3, yscale=1.5, every node/.style={scale=0.55}]

\draw[->] (0.1,0) -- (1.15,0) node[right] {$\frac{1}{p}$};
\draw[->] (0,0) -- (0,2.7) node[above] {$s$};

\foreach \x/\xlabel in {0.1/{$\frac{1}{\infty}$}, 0.8/{$\frac{1}{2}$}, 1.1/{$1$}} 
    \draw (\x,0.03) -- (\x,-0.03) node[below] {\xlabel};
\foreach \y/\ylabel in {0/0, 1/1, 2/2}
    \draw (0.03,\y) -- (-0.03,\y) node[left] {\ylabel};

\fill[green!20, opacity=0.5] (0.1,1.5) -- (0.8,1.5) -- (1.1,1.95) -- (1.1,3.1) -- (0.1,3.1) -- cycle;
\fill[orange!20, opacity=0.5] (0.1,0.45) -- (0.8,1.5) -- (0.1,1.5) -- cycle;
\fill[black!10, opacity=0.5] (0.1,0.45) -- (1.1,1.95) -- (1.1,0) -- (0.1,0) -- cycle;
\node[rotate=0, scale=0.9, orange!50!black] at (0.37,1.3) {\textbf{Intermediate optimal regret}};
\node[rotate=0, scale=1, orange!50!black] at (0.37,1.15) {$T^{1-\frac{s}{d}}$};

\fill[green!20, opacity=0.6] (0.005,0) -- (0.1,0) -- (0.1,3.1) -- (0.005,3.1) -- cycle;
\node[green!50!black] at (0.6,2.8) {\textbf{Smooth functions}};
\node[anchor=west,green!50!black] at (0.3,2.4) {\textbf{Fast optimal regret}};
\node[anchor=west,green!40!black] at (0.3,2.2) {$\ell_t$ convex: $\sqrt{T}$};
\node[anchor=west,green!40!black] at (0.3,2.) {$\ell_t$ exp-concave: $T^{1-\nicefrac{2s}{(2s+d)}}$};
\node[darkgray] at (0.75,0.8) {\textbf{Wild non-continuous functions}};
\node[black] at (0.75,0.6) {See Appendix~\ref{appendix:discussion_unbounded_case}};


\draw[thick, green!50!black] (0.1,1.5) -- (0.8,1.5); 
\draw[thick, green!50!black] (0.1,1.5) -- (0.1,0.45); 
\node[rotate=0, scale=0.8, black,above left] at (0.8,1.5) {$s = \frac{d}{2}$};

\draw[thick] (0.1,0.45) -- (1.1,1.95);    
\node[rotate=30, scale=0.8] at (1,1.9) {$s = \frac{d}{p}$};


\begin{scope}[transform shape=false]
\filldraw (0,0) circle[radius=0.015];
\filldraw (0,1) circle[radius=0.015];
\filldraw (0,2) circle[radius=0.015];
\filldraw (0,3.1) circle[radius=0.015];
\foreach \y in {0.5, 1.5, 2.5}
  \filldraw[gray] (0,\y) circle[radius=0.015];
\filldraw (0.8,0) circle[radius=0.015];
\filldraw (1.1,0) circle[radius=0.015];
\filldraw[gray] (0.9,0) circle[radius=0.015];
\end{scope}

\node[above right] at (0.02,0) {$L^\infty \C^0$};
\node[right] at (0.02,1) {$\C^1$};
\node[right] at (0.02,2) {$\C^2$};
\node[right] at (0.02,3.1) {$\C^\infty$};
\node at (0,2.97) {$\vdots$};
\node[darkgray, right] at (0.02,0.6) {$\C^{\alpha}$};
\node[darkgray, right] at (0.02,1.6) {$\C^{1,\alpha}$};
\node[darkgray, right] at (0.02,2.6) {$\C^{2,\alpha}$};
\node[darkgray, left] at (-0.02,0.6) {$s=\alpha$};

\node[above] at (0.8,0) {$L^2$};
\node[above] at (1.1,0) {$L^1$};
\node[darkgray, above] at (0.9,0) {$L^p$};
\node[darkgray, below] at (0.9,0) {$\frac{1}{p}$};


\end{tikzpicture}
\caption{Diagram $(\nicefrac{d}{p},s)$ of regret regimes against Besov spaces.}
\vspace{-0.6cm}
\end{wrapfigure}
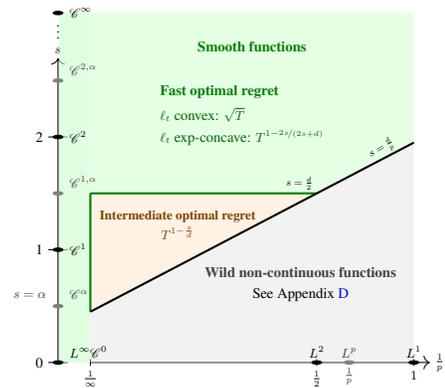
and against a broad class of potentially irregular prediction rules, specifically those lying in Besov spaces $B_{pq}^s(\X)$, which we introduce later.
We focus in particular on the case $s > \frac{d}{p}$, ensuring that the competing functions are continuous and bounded; see the standard embedding results in \cite{triebel1983theory,gine2021mathematical}.
As a corollary, we show that Algorithm~\ref{alg:training} achieves minimax-optimal rates when competing against functions in Hölder spaces, while automatically adapting to the unknown regularity of the target function.

\paragraph{Besov spaces.}

Besov spaces $B_{pq}^s(\X) := B^s_q(L^p(\X))$ are a classical family of function spaces indexed by three parameters: a smoothness parameter $s > 0$, an integrability parameter $p \in [1, \infty]$, and a summability parameter $q \in [1, \infty]$. For those unfamiliar with Besov spaces, this space can be intuitively viewed as the space of functions with $s > 0$ derivatives in $L^p(\X)$, with $p \ge 1$, and parameter $q \ge 1 $ allows for additional finer control of the regularity of the underlying functions. These spaces interpolate between Sobolev and Hölder spaces and are designed to capture both smooth and non-smooth behaviors in functions. There exist several equivalent definitions of Besov spaces (e.g. using differences, or interpolation theory): we refer to \cite{gine2021mathematical,hardle2012wavelets,triebel2006theory} for detailed and general background on Besov spaces. 
In this work, we adopt the wavelet characterization, which is particularly well suited for the analysis of our wavelet-based algorithm.

Let $s > 0$ and let $\{\phi_{j_0,k}, \psi_{j,k}\}$ be an orthonormal $S$-regular wavelet basis with $S > s$ (see Definition~\ref{def:regular_wavelet}). We say that a function $f$ belongs to the Besov space $B_{pq}^s$ if the following wavelet-based norm is finite, with $s' = s + \frac{d}{2} - \frac{d}{p}$:
\begin{equation}
\label{eq:def_besov}
\begin{split}
\|f\|_{B_{pq}^s} := \|\alphabold_{j_0}\|_p + \bigg(\sum_{j \geq j_0}2^{j q s'} \|\betabold_j\|_p^{q} \bigg)^{1/q}, \quad &\text{if } 1 \leq q < \infty, \\
\|f\|_{B_p^s} := \|\alphabold_{j_0}\|_p + \sup_{j \geq j_0} 2^{j s'} \|\betabold_j\|_p, \quad &\text{if } q = \infty,
\end{split}
\end{equation}
where $\alphabold_{j_0} = (\langle f, \phi_{j_0,k} \rangle)_{k \in \bar \Lambda_{j_0}}$ denotes the vector of scaling coefficients at level $j_0$, and $\betabold_j = (\langle f, \psi_{j,k} \rangle)_{k \in \Lambda_j}$ are the wavelet coefficients at scale $j \geq j_0$.


We now present our first result, establishing regret guarantees for Algorithm~\ref{alg:training} when competing against irregular but bounded prediction rules. 

\begin{theorem}[Regret against Besov predictors]
\label{theorem:regret_besov}
Let $T \geq 1$, $s > 0, 1 \leq p,q \leq \infty$ with $s - \frac{d}{p} > \varepsilon > 0$. Let $\{\phi_{j_0,k},\psi_{j,k}\}$ be an $S$-regular wavelet basis (Definition~\ref{def:regular_wavelet}) for some $S > s$. Suppose Algorithm~\ref{alg:training} is run with updates satisfying Assumption~\ref{assumption:paramfree}, starting at $\alphabold_{j_0} = 0, \betabold_j = 0, j \geq j_0$ and using a wavelet expansion \eqref{eq:predictor} from scale $j_0 = 0$ to $J = \lceil \frac{S}{d \varepsilon} \log_2 T\rceil$. Then, for any $f \in B_{pq}^s(\X)$, the regret satisfies:
\[
R_T(f) \leq C(\lambda,\psi,s,p) G \|f\|_{B_{pq}^s}
\begin{cases} 
\sqrt{T} & \text{if } s \ge \frac{d}{2} \text{ or } p < 2 \\[0.5ex] 
T^{1 -  \nicefrac{s}{d}} & \text{else}, 
\end{cases}
\]
where $C(\lambda,\psi,s,p)>0$ depend only on $s$, $p$, $q$, the wavelet basis, and the constants in Assumption~\ref{assumption:paramfree}.
\end{theorem}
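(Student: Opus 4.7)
The plan is to decompose $R_T(f)$ into an \emph{approximation} and an \emph{estimation} term via the wavelet truncation
\[
f_J := \sum_{k\in\bar\Lambda_0}\alpha_{0,k}\,\phi_{0,k} + \sum_{j=0}^{J}\sum_{k\in\Lambda_j}\beta_{j,k}\,\psi_{j,k}
\]
of $f$ at the algorithm's finest scale $J$, writing $R_T(f) = \sum_t[\ell_t(\hat f_t(x_t)) - \ell_t(f_J(x_t))] + \sum_t[\ell_t(f_J(x_t)) - \ell_t(f(x_t))]$.

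\emph{Approximation term.} By $G$-Lipschitzness of $\ell_t$, it suffices to prove $\|f - f_J\|_\infty \lesssim 2^{-J(s - d/p)}\|f\|_{B^s_{pq}}$. This follows from bounded overlap of the $S$-regular wavelets, the pointwise estimate $|\psi_{j,k}|\leq \|\psi\|_\infty 2^{jd/2}$, the trivial inequality $\|\betabold_j\|_\infty \leq \|\betabold_j\|_p$, and the characterization \eqref{eq:def_besov} giving $\|\betabold_j\|_p \leq 2^{-js'}\|f\|_{B^s_{pq}}$ with $s' = s + d/2 - d/p$. Plugging in $J = \lceil (S/(d\varepsilon))\log_2 T\rceil$, together with $s - d/p > \varepsilon$ and $S > s$, bounds this term by $G\,T^{1 - S/d}\|f\|_{B^s_{pq}}$, dominated by both $\sqrt T$ and $T^{1-s/d}$.

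\emph{Estimation term.} Convexity of $\ell_t$ and \eqref{eq:gradient} give
\[
\textstyle\sum_{t=1}^T\bigl[\ell_t(\hat f_t(x_t)) - \ell_t(f_J(x_t))\bigr] \leq \sum_{t=1}^T \sum_{(j,k) \in \Gamma_t} g_{j,k,t}\bigl(c_{j,k,t} - c_{j,k}^{\star}\bigr),
\]
where $c_{j,k}^{\star}$ is the wavelet coefficient of $f_J$. Since inactive indices carry zero gradients, applying Assumption~\ref{assumption:paramfree} coefficient-by-coefficient to the \emph{active} ones (initialized at $0$) bounds this by
\[
\textstyle\sum_{(j,k)\text{ active}} |c_{j,k}^{\star}|\bigl(C_1\sqrt{\sum_{t=1}^T g_{j,k,t}^2} + C_2 G\bigr),\quad |g_{j,k,t}| \leq G\,\|\psi\|_\infty\, 2^{jd/2}\,\mathbf{1}\{\psi_{j,k}(x_t)\neq 0\}.
\]
Setting $T_{j,k}:=|\{t:\psi_{j,k}(x_t)\neq 0\}|$ and $N_j := |\{k:T_{j,k}\geq 1\}|$, bounded overlap yields $\sum_k T_{j,k}\leq c_\psi T$ and $N_j \leq \lambda 2^{jd}\wedge c_\psi T$. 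The remaining task is to bound $\sum_{k} |\beta_{j,k}|\sqrt{T_{j,k}}$ by $\|\betabold_j\|_p$ times a controlled factor, and then to combine per-scale bounds via a final H\"older step in $j$ with exponents $(q',q)$ that converts $\|\betabold_j\|_p$ into $2^{-js'}\|f\|_{B^s_{pq}}$ through \eqref{eq:def_besov}.

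\emph{Main obstacle.} The H\"older step bifurcates according to the convexity of $x\mapsto x^{p'/2}$. For $p<2$ (so necessarily $s > d/p > d/2$), the crude bound $T_{j,k}^{p'/2}\leq T^{p'/2-1}T_{j,k}$ combined with $\sum_k T_{j,k}\leq c_\psi T$ gives a per-scale contribution of order $2^{jd/2}\sqrt T\,\|\betabold_j\|_p$, summing to $\|(2^{-j(s-d/p)}\sqrt T)_j\|_{q'}\|f\|_{B^s_{pq}}\lesssim \sqrt T\,\|f\|_{B^s_{pq}}$ since $s>d/p$. For $p\geq 2$, Jensen's inequality gives $\|T_j^{1/2}\|_{p'} \lesssim N_j^{1/2-1/p}\sqrt T$; splitting the scales at the crossover $j^{\star}\approx(\log_2 T)/d$ (where $\lambda 2^{j^{\star}d}\asymp T$) produces per-scale contributions $2^{j(d/2-s)}\sqrt T$ for $j\leq j^{\star}$ and $2^{-j(s-d/p)}\,T^{1-1/p}$ for $j>j^{\star}$. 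For $s\geq d/2$ both series sum to $O(\sqrt T)$; otherwise their common dominant $j^{\star}$-term equals $T^{1-s/d}$, yielding the intermediate rate. The scaling coefficients at $j_0=0$ contribute only $O(\sqrt T\,\|f\|_{B^s_{pq}})$ (a finite-dimensional analog), and the residual $C_2G$ terms sum to $O(T^{1/2-s/d}\|f\|_{B^s_{pq}})$, absorbed in the $\sqrt T\vee T^{1-s/d}$ rate. Combining the approximation and estimation bounds yields the announced regret.
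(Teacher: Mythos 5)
Your proposal is correct, but it reaches the theorem by a genuinely different route from the paper. The paper does \emph{not} compete against the full linear truncation $f_J$ at depth $J=\lceil\frac{S}{d\varepsilon}\log_2 T\rceil$; it explicitly warns that doing so ``requires estimating a large number of wavelet coefficients'' and instead builds a \emph{nonlinear} oracle $f^*=f_{J^*}+f_{\Lambda^*}$, keeping all coefficients only up to $J^*=\lceil\frac1d\log_2 T\rceil$ and, beyond that, only the $|\Lambda^*|=2^{J^*d}$ largest rescaled coefficients $v_{j,k}=2^{js'}\beta_{j,k}$, whose tail is controlled through the $\ell^p$ bound \eqref{eq:bound_lp}. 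That device is needed there because the paper's per-scale estimation bound replaces the count of \emph{active} coefficients by the full cardinality $|\Lambda_j|\le\lambda 2^{jd}$ (via $\|x\|_{p'/2}\le|\Lambda_j|^{(2/p'-1)_+}\|x\|_1$), which blows up at fine scales. You sidestep the nonlinear oracle entirely by keeping the data-dependent activity counts $T_{j,k}$ and using the two caps $\sum_k T_{j,k}\le c_\psi T$ and $N_j\le\lambda 2^{jd}\wedge c_\psi T$; your crossover at $2^{j^\star d}\asymp T$ then plays exactly the role of the paper's cut at $J^*$, and your per-scale contributions $2^{j(d/2-s)}\sqrt T$ (for $j\le j^\star$, $p\ge2$), $2^{-j(s-d/p)}T^{1-1/p}$ (for $j>j^\star$) and $2^{-j(s-d/p)}\sqrt T$ (for $p<2$) reproduce the paper's three regimes governed by the sign of $\beta=s-\frac dp-d(\frac12-\frac1p)_+$, including the common dominant term $T^{1-s/d}$ and the $\log T$ factor at $s=d/2$. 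Your treatment of the approximation term ($T\cdot2^{-J(s-d/p)}\le T^{1-S/d}$, dominated in every regime because $p<2$ forces $s>d/2$ and hence $S>d/2$) is also sound and simpler than the paper's, which must additionally control $\|(f_{J^*}+f_{\Lambda^*})-f_J\|_\infty$ via the decay \eqref{eq:bound_beta_nonlinear} of the discarded coefficients. The one point to make explicit is that your bound $\sum_k T_{j,k}\le c_\psi T$ uses bounded overlap of the wavelets at each scale (finitely many $\psi_{j,k}$ nonvanishing at any point), which Definition~\ref{def:regular_wavelet} only delivers through the compact-support assumption the paper invokes in the main text; for non-compactly-supported $S$-regular bases your counting argument would need the paper's $L^2$-overlap condition \ref{def:s_regular_wavelet:D2} instead. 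What each approach buys: the paper's nonlinear oracle is the standard best-$N$-term approximation device and makes the link to \cite{devore1998nonlinear,donoho1998minimax} transparent; yours is more elementary, avoids the oracle-set bookkeeping, and localizes the gain directly in the gradient energy.
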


Theorem~\ref{theorem:regret_besov} is proved in Appendix~\ref{appendix:proof_theorem:regret_besov}, where we provide the full statement including explicit constants. Our bounds are \emph{minimax-optimal} in the regimes $s > \frac{d}{p}$ when facing convex losses; see~\cite{rakhlin2015online}. Note that logarithmic factors in $T$ may be absorbed into the constant in the regret bound of Theorem~\ref{theorem:regret_besov}, typically in the case $p<q$. 
The details are provided in Appendix~\ref{appendix:proof_theorem:regret_besov}.




\paragraph{Adaptivity and tradeoff.} Importantly, our procedure is \emph{adaptive} to the Besov norm $\|f\|_{B_{pq}^s}$, and the parameters $(s, p, q)$ whenever $s < S$. 
Notably, via the usual embeddings, $f$ can belong simultaneously to several Besov spaces $B^s_{p\infty}$ with different norms $\|f\|_{B_{p\infty}^s}$ depending on $s$ and $p$. Remarkably, our algorithm effectively competes against any oracle associated to the best (which are not necessarily the largest) values of $(s,p)$, yielding a regret bound of type:
\begin{equation} 
\label{eq:tradeoff}
R_T(f) \leq \inf_{s,p} C(\lambda,\psi,s,p) G \|f\|_{B_{p\infty}^s}T^{r(s,p)}
\end{equation}
where the infimum is taken over all admissible pairs $(s, p)$ such that $f \in B^s_{p\infty}(\X)$, the exponent $r(s, p)$ reflects the rate in each regime according to $(s,p)$ (see Theorem~\ref{theorem:regret_besov}), and the constant $C(\lambda,\psi,s,p)$ detailed in Appendix~\ref{appendix:proof_theorem:regret_besov}.

\paragraph{Complexity and choice of wavelet basis.} While one can use wavelets with infinite regularity (i.e., $S = \infty$), such as Meyer wavelets, these are not compactly supported in space and thus lack good localization properties. In practice, it is more common to use compactly supported wavelets that offer a good trade-off between smoothness and spatial localization. Their compact support implies that most basis functions $\psi_{j,k}$ vanish at any given input $x_t$: only the indices $k \in \Lambda_j$ such that $x_t \in \operatorname{supp}(\psi_{j,k})$ contribute to the prediction. For example, Daubechies wavelets of regularity $S$ are supported on the hypercube $[0, S]^d$, so at most $O(S^d)$ coefficients per scale $j$ are nonzero at any point $x_t$. As a result, the per-round computational cost of our algorithm scales as $O(JS^d)$, where $J$ is the total number of levels. Among all wavelet families, the Haar basis (corresponding to $S=1$) yields the most efficient updates, as its basis functions are non-overlapping, but it is limited to capturing piecewise constant (i.e., Lipschitz-1) regularity; see \cite{liautaud2025minimax} for an algorithm exploiting this structure and Figure~\ref{fig:level_approx} for an illustration in the cases $S=1$ and $S=5$.



\paragraph{The case of Hölder function spaces $\C^s(\X)=B_{\infty \infty}^s(\X)$.}

We previously showed that Algorithm~\ref{alg:training} effectively competes against any comparator in the broad class of Besov spaces $B_{pq}^s$. In particular, by classical embedding results, when $p = q = \infty$ one has the identification $\C^s(\X) = B_{\infty \infty}^s(\X)$ where $\C^s(\X)$ is the set of Hölder continuous functions. A function $f \in \C^s(\X)$ with $s \in (0,1]$ if it satisfies the Hölder condition:
\begin{equation}
\label{eq:holder}
|f(x) - f(y)| \leq L \|x - y\|_\infty^s \quad \text{for all } x, y \in \X,
\end{equation}
where $L > 0$ is the smallest such constant, denoted $|f|_{s}$. For $s > 1$, we extend the definition by requiring that all derivatives $D^m f$ exist and satisfy~\eqref{eq:holder} with exponent $s - \lfloor s \rfloor$ for any multi-indices $m \in \mathbb N^d$ such that $|m| = \lfloor s \rfloor$. 

We now state a corollary of Theorem~\ref{theorem:regret_besov} for Hölder continuous functions, expressed in terms of the Hölder semi-norm $|f|_s$ and sup norm $\|f\|_\infty$.


\begin{cor}[Regret against Hölder predictors]
\label{cor:regret_holder}
Let $T \geq 1$ and $s > 0$. Let $\{\phi_{j_0,k},\psi_{j,k}\}$ be an $S$-regular wavelet basis with $S>s$. Under the same assumptions as in Theorem~\ref{theorem:regret_besov}, Algorithm~\ref{alg:training} has regret bounded for any $f \in \C^s(\X)$ as
\[
R_T(f) \leq C G \|f\|_\infty \sqrt{T} + C G|f|_s \cdot
\begin{cases}
\sqrt{T} & \text{if } s > \frac{d}{2}, \\
\log_2(T) \sqrt{T} & \text{if } s = \frac{d}{2}, \\
T^{1 - \frac{s}{d}} & \text{if } s < \frac{d}{2}.
\end{cases}
\]
where $C=C(C_1,C_2,\lambda,\phi,\psi,s)$ is a constant independent of $T$ and $f$, and depend only on $s$, $p$, the wavelet basis, and Assumption~\ref{assumption:paramfree}.
\end{cor}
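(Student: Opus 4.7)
The plan is to reduce the Hölder statement to Theorem~\ref{theorem:regret_besov} via the wavelet characterization of Hölder spaces, while carefully isolating the contributions of $\|f\|_\infty$ and $|f|_s$ to the regret. Recall the classical identification $\C^s(\X) = B^s_{\infty\infty}(\X)$ obtained via an $S$-regular wavelet basis with $S > s$. The key coefficient estimates I would use are: the scaling coefficients satisfy $|\alpha_{0,k}| \leq \|f\|_\infty \|\phi\|_1$ so that $\|\alphabold_0\|_\infty \lesssim \|f\|_\infty$, while the detail coefficients satisfy the Hölder bound $|\beta_{j,k}| \leq C(\psi,s)\,2^{-j(s+d/2)}|f|_s$. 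This latter estimate follows from the vanishing-moment property of the $S$-regular wavelet combined with a Taylor expansion of $f$ of order $\lfloor s\rfloor$ on the compact support of $\psi_{j,k}$, and is classical in wavelet theory.

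With these coefficient bounds in hand, I would reopen the regret decomposition used in the proof of Theorem~\ref{theorem:regret_besov} and split $R_T(f) \leq R_T^{\text{coef}} + R_T^{\text{approx}}$ into a sum of per-coefficient regrets from Assumption~\ref{assumption:paramfree} plus the approximation error of truncating \eqref{eq:wavelet_decomposition} at level $J$. Only indices in $\Gamma_t$ are updated at time $t$. Summing Assumption~\ref{assumption:paramfree} over each coefficient learner, using $|g_{j,k,t}| \leq G 2^{jd/2}\|\psi\|_\infty$ together with Cauchy--Schwarz aggregation over $k$ and the bounded-overlap property of $S$-regular wavelets, the scaling layer (a finite set of $O(1)$ learners, since $j_0 = 0$) contributes $\lesssim G\|\alphabold_0\|_\infty\sqrt{T} \lesssim G\|f\|_\infty\sqrt{T}$. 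Substituting the Hölder estimate into $\|\betabold_j\|_2 \leq \sqrt{|\Lambda_j|}\|\betabold_j\|_\infty \lesssim 2^{-js}|f|_s$ yields a detail contribution of order $G|f|_s \sum_{j=0}^{J} 2^{j(d/2-s)}\sqrt{T}$.

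The final step is to evaluate the geometric sum in the three regimes. When $s > d/2$, the series is summable and gives $|f|_s\sqrt{T}$; the approximation error $\lesssim T \cdot 2^{-Js}|f|_s$ is negligible under the prescribed $J=\lceil \tfrac{S}{d\varepsilon}\log_2 T\rceil$. When $s = d/2$, the partial sum is of order $J = \Theta(\log_2 T)$, producing the $\log_2(T)\sqrt{T}$ factor. When $s < d/2$, the sum is dominated by its last term $2^{J(d/2-s)}\sqrt{T}$, and balancing this against the approximation error $T \cdot 2^{-Js}|f|_s$ prescribes $2^J \sim T^{1/d}$ and the rate $T^{1-s/d}$. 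In every case the $G\|f\|_\infty\sqrt{T}$ contribution from the scaling layer remains unchanged.

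The main obstacle I anticipate is precisely this $\|f\|_\infty$ versus $|f|_s$ separation. A naive application of Theorem~\ref{theorem:regret_besov} via $\|f\|_{B^s_{\infty\infty}} \lesssim \|f\|_\infty + |f|_s$ would assign the slower rate $T^{1-s/d}$ to both pieces when $s<d/2$, whereas the corollary asserts that $\|f\|_\infty$ always appears with a $\sqrt{T}$ rate. Addressing this requires exploiting the finite-dimensional nature of the scaling layer at $j_0=0$ and reopening the aggregation step in the proof of Theorem~\ref{theorem:regret_besov} to treat the scaling and detail layers on separate footings.
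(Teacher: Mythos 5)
Your proposal is correct and follows essentially the same route as the paper's proof: a purely linear truncated oracle (no nonlinear coefficient selection), the Hölder decay bound $|\beta_{j,k}|\lesssim |f|_s 2^{-j(s+d/2)}$ from vanishing moments plus Taylor expansion, the separate $\|f\|_\infty\|\phi\|_1$ treatment of the $O(1)$ scaling coefficients at $j_0=0$, the three-regime evaluation of $\sum_j 2^{j(d/2-s)}$, and the balancing $2^J\sim T^{1/d}$ (which the paper implements by truncating the oracle at $\lceil\tfrac1d\log_2 T\rceil$ even when the algorithm runs with the larger $J$ of Theorem~\ref{theorem:regret_besov}). Your identification of the $\|f\|_\infty$ versus $|f|_s$ separation as the reason one cannot simply invoke Theorem~\ref{theorem:regret_besov} with $\|f\|_{B^s_{\infty\infty}}$ is exactly the point the paper's argument is built around.
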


We prove Corollary~\ref{cor:regret_holder} in Appendix~\ref{appendix:proof_corollary:regret_holder}. Our results are minimax-optimal for general convex losses, as established in \cite{rakhlin2014online,rakhlin2015online}, and improve over the guarantees of \cite{liautaud2025minimax}, which are restricted to functions with at most Lipschitz regularity ($s \leq 1$). In contrast, our method adapts to any smoothness level $s > 0$.
Moreover, Corollary~\ref{cor:regret_holder} shows that Algorithm~\ref{alg:training} adapts simultaneously to both the smoothness $s$ and the Hölder semi-norm $|f|_s$ of any competitor $f \in \C^s$. As in the Besov case, our algorithm automatically trades off between leveraging higher smoothness $s$ and benefiting from smaller $|f|_s$, see \eqref{eq:tradeoff}. This tradeoff will be discussed and exemplified in the next section.


\section{Adaptive learning in inhomogeneous regularity regimes}
\label{section:adaptive_wavelet}

In this section, we extend Algorithm~\ref{alg:training} to enable local adaptivity, with a particular focus on settings where the target function exhibits spatially inhomogeneous regularity; see Figure~\ref{fig:inhomogeneous_function} for an illustration. Our method is inspired by the localized chaining approach of \cite{liautaud2025minimax}, and we show that it can adapt to local variations in regularity across a broad class of functions in Besov spaces $B_{pq}^s$. 
This adaptive procedure also yields improved global regret rates over those of Theorem~\ref{theorem:regret_besov} for exp-concave loss functions with optimal guarantees formally established in Theorem~\ref{theorem:local_besov_regret}.


\subsection{Adaptive Online Wavelet Regression}

 We begin by describing the partitionning process we use in our strategy, and we further describe the aggregation procedure leading to our adaptive Algorithm~\ref{alg:online_adaptive_wavelet}.

\paragraph{Partitioning tree.} A common strategy to construct partitions of $\X$ is via hierarchical refinement, with dyadic partitions being a canonical example. Fix $J_0 \in \mathbb{N}^*$. For each $j_0 \in [J_0]$, let $\D_{j_0} = \D_{j_0}(\X)$ denote the collection of dyadic subcubes of $\X$ at resolution level $j_0$, where each subcube has side length $|\X| 2^{-j_0}$. We define the full multiscale dyadic collection as $\D = \bigcup_{j_0=0}^{J_0} \D_{j_0}$, spanning scales $j_0 = 0, \dots, J_0$.
This collection is naturally aligned with a tree structure $\T = \T(\D)$ with node set $\N(\T)$. Each node $n \in \N(\T)$ is associated with a unique cube $\X_n \in \D$ at some level $\l(n) \in [J_0]$, such that $\X_n \in \D_{\l(n)}$. For any fixed scale $j_0 \in [0, J_0]$, the cubes in $\D_{j_0}$ form a uniform partition of $\X$, and each cube $\X_n \in \D_{j_0}$ with $\l(n) = j_0$ has side length $|\X_n| = |\X| 2^{-j_0}$. Furthermore, each node $n$ at level $\l(n)=j_0$ has $2^d$ children corresponding to the dyadic subcubes $\X_{n'} \subset \X_n$ at level $\l(n') = j_0 + 1$.
Finally, we refer to any subtree $\T' \subset \T$ that shares the same root and whose leaves or terminal nodes $\L(\T')$ form a (potentially non-uniform) partition of $\X$ as a \emph{pruning} of $\T$; see \cite[Def.~2]{liautaud2025minimax}. The goal is to design an algorithm that effectively tracks the best partition induced by such prunings $\T'$.


\paragraph{Local adaptation via multi-scale expert aggregation.}
Our objective is to identify the optimal starting scale $j_0$ \emph{locally} over $\X$, in order to adapt to the spatial variability in function regularity. Intuitively, allowing finer-scale precision in regions with lower regularity can significantly improve prediction accuracy. To this end, we launch a family of \emph{global} predictors $\hat{f}_{j_0}$ of the form~\eqref{eq:predictor}, each initialized at a different starting scale $j_0 \in [J_0]$ and sharing a common maximum scale $J = \lceil \frac{S}{d \varepsilon} \log_2 T\rceil$. Following the tree structure $\T$ of depth $J_0$ we associate each node $n \in \N(\T)$ to starting scale $j_0 =\l(n)$ and a \emph{local} expert predictor $\hat{f}_{n,\a_n} := \hat{f}_{j_0}|_{\X_n}$ as the restriction of the global predictor $\hat{f}_{j_0}$ to the subregion $\X_n$ and whose scaling coefficients are set to $\alphabold_{j_0} = \a_n$ in \eqref{eq:predictor}. The scaling coefficients $\a_n$ are supported on a grid $\mathcal A$ of precision $T^{-\nicefrac{1}{2}}$. The local predictor is then associated with a restricted scaling index set $\bar{\Lambda}_{j_0,n} \subset \bar \Lambda_{j_0}$ and wavelet index set $\Lambda_{j,n} \subset \Lambda_j$ for $j \geq j_0$, both supported on $\X_n$. For simplicity, we define the tuples $e = (n,\a_n)$ belonging to some expert set $\mathcal E \subset \N(\T) \times \mathcal{A}$ whose cardinal is bounded as $|\mathcal E|\leq |\N(\T)||\mathcal{A}|^\lambda$.

At each time $t \geq 1$, we define the set of active experts at round $t$ as $\mathcal E_t := \{e = (n,\a_n) \in \mathcal E : x_t \in \X_n\}$. The prediction is then formed by aggregating the outputs of active local multi-scale experts in $\mathcal E_t$, yielding:
\begin{equation}
\label{eq:aggregation}
\hat{f}_t(x_t) = \sum_{e \in \mathcal{E}_t} w_{e,t} \, [\hat{f}_{e,t}(x_t)]_B,
\quad \text{where} \quad 
w_{e,t} \in [0,1], \quad 
\sum_{e \in \mathcal{E}_t} w_{e,t} = 1,
\end{equation}
and $[\cdot]_B = \max(-B,\min(B,\cdot))$ denotes the clipping operator in $[-B,B]$.
Each localized expert $\hat{f}_{e,t}=\hat f_{n,\a_n,t}$ is trained independently using Algorithm~\ref{alg:training}, with scaling coefficients initialized at some $\a_n$, and contributes only within its local region $\X_n$. This framework mirrors an instance of the \emph{sleeping expert} problem, as described for example in \cite{gaillard2014second}, and requires a standard sleeping reduction, such as the one in line \ref{alg:line_sleeping}, and then used in lines \ref{alg:line_prediction}-\ref{alg:line_weight} of Algorithm~\ref{alg:online_adaptive_wavelet}. The weights $(w_{e,t})_{e \in \mathcal{E}}$ are updated over time in line \ref{alg:line_weight} using a $\texttt{weight}$ procedure based on gradients $\nabla_{t}\in [-\tilde G,\tilde G]^{|\mathcal E|}$ that satisfies Assumption~\ref{assumption:second_order_algo}. State-of-the-art aggregation algorithms, such as those proposed in \cite{gaillard2014second,koolen2015second,wintenberger2017optimal}, satisfy this second-order regret bound and are compatible with the sleeping expert setting. The overall procedure is summarized in Algorithm~\ref{alg:online_adaptive_wavelet}.


\begin{assumption}
\label{assumption:second_order_algo}
Let $\nabla_1, \dots, \nabla_T \in [-\tilde G, \tilde G]^{|\mathcal E|}$ for $T \geq 1$ and $\tilde G > 0$. Assume that the weight vectors $\tilde \w_t = (\tilde w_{e,t})_{e\in \mathcal E}$, initialized with a uniform distribution $\tilde \w_1$, are updated via the \emph{\texttt{weight}} function in Algorithm~\ref{alg:online_adaptive_wavelet} and satisfy the following second-order regret bound:
\[
\sum_{t=1}^T \nabla_t^\top \tilde \w_t - \nabla_{e,t} \leq C_3 \sqrt{\log(|\mathcal E|) \sum_{t=1}^T (\nabla_t^\top \tilde \w_t - \nabla_{e,t})^2} + C_4 \tilde G,
\]
\vspace{-0.1cm}
for all $e \in \mathcal E$, where $C_3, C_4 > 0$ are constants.
\end{assumption}

Note that for Assumption~\ref{assumption:second_order_algo} to hold, the loss gradients $\nabla_t$ must be uniformly bounded in the sup-norm by $\tilde G$. This is ensured by two factors: first, we consider oracle prediction rules in $L^\infty(\mathcal{X})$, ensuring that their outputs are also uniformly bounded, and second the predictions produced in \eqref{eq:aggregation} are clipped to a bounded range.

\begin{algorithm}[htbp]
\caption{Adaptive Online Wavelet Regression}\label{alg:online_adaptive_wavelet}
\SetKwInOut{Input}{Input}
\SetKwInOut{Output}{Output}
\Input{Bounds $G, B > 0$; Set of experts $\mathcal E$\; Initial uniform weights $\tilde \w_1 = (\tilde w_{e,1})_{e \in \mathcal E}$;
Initial prediction functions $(\hat{f}_{e,1})_{e \in \mathcal E}$\;}
\For{$t = 1$ \KwTo $T$}{
    Receive $x_t$\;
    Reveal active expert set $\mathcal E_t$ and local active index set 
    \[\textstyle \Gamma_{e,t} := \Gamma_t \cap \bar \Lambda_{j_0,n} \cap \cup_{j\geq j_0}^{J} \Lambda_{j,n}, \quad \text{for every } e=(n,\a_n) \in \mathcal E_t, \, j_0 = \l(n)\]
    with $\Gamma_t$ as in \eqref{eq:active_indices}\; 
    Reduce weights $w_{e,t} \gets \tilde w_{e,t}/\sum_{e \in \mathcal E_t} \tilde w_{e,t}$ if $e \in \mathcal E_t$ and $w_{t,e} = 0$ else \label{alg:line_sleeping}\;
    Predict $\hat{f}_t(x_t) = \sum_{e \in \mathcal E_t} w_{e,t}[\hat{f}_{e,t}(x_t)]_B$ using active weights $(w_{e,t})_{e\in \mathcal E_t}$ \label{alg:line_prediction}\;
    Reveal gradient $\nabla_t = \nabla_{\tilde \w_t} \ell_t\big(\sum_{e \in \mathcal E_t} \tilde w_{e,t} [\hat{f}_{e,t}(x_t)]_B + \sum_{e \notin \mathcal E_t} \tilde w_{e,t} \hat{f}_t(x_t)\big)$\;
    Update $\tilde \w_{t+1} \gets \texttt{weight}(\tilde \w_t, \nabla_t)$ with \texttt{weight} satisfying Assumption~\ref{assumption:second_order_algo} \label{alg:line_weight}\;
    \For{$e \in \mathcal E_t$}{
        Reveal gradient $\g_{e,t}=(g_{j,k,t})_{(j,k)\in \Gamma_{e,t}}$ as in \eqref{eq:gradient}, on active index set $\Gamma_{e,t}$\;
        Update $\hat f_{e,t}$ using Algorithm~\ref{alg:training} with input $\Gamma_{e,t}$ and $\g_{e,t}$\;
    }
}
\Output{$\hat{f}_{T+1} = \sum_{e\in \mathcal E} w_{e,T+1} [\hat{f}_{e,T+1}]_B$}
\end{algorithm}


\subsection{Regret guarantees under spatially inhomogeneous smoothness (Alg.~\ref{alg:online_adaptive_wavelet})}

\paragraph{Local Besov regularity.}
Let $f \in B_{pq}^s$ for some fixed $1 \leq p,q \leq \infty$ and $s>\frac{d}{p}$. Let $\T'$ be a pruning of $\T$ and $(\X_n)_{n \in \L(\T')}$ be the associated partition of $\X$.

\begin{wrapfigure}{left}{0.45\linewidth}
\vspace{-1.1cm}
\begin{tikzpicture}
\begin{axis}[
    axis lines=none,
    xmin=0, xmax=5.5,
    ymin=0, ymax=5,
    width=12cm, height=4cm,
    clip=false
]
\addplot graphics[xmin=0,xmax=3,ymin=0,ymax=8] {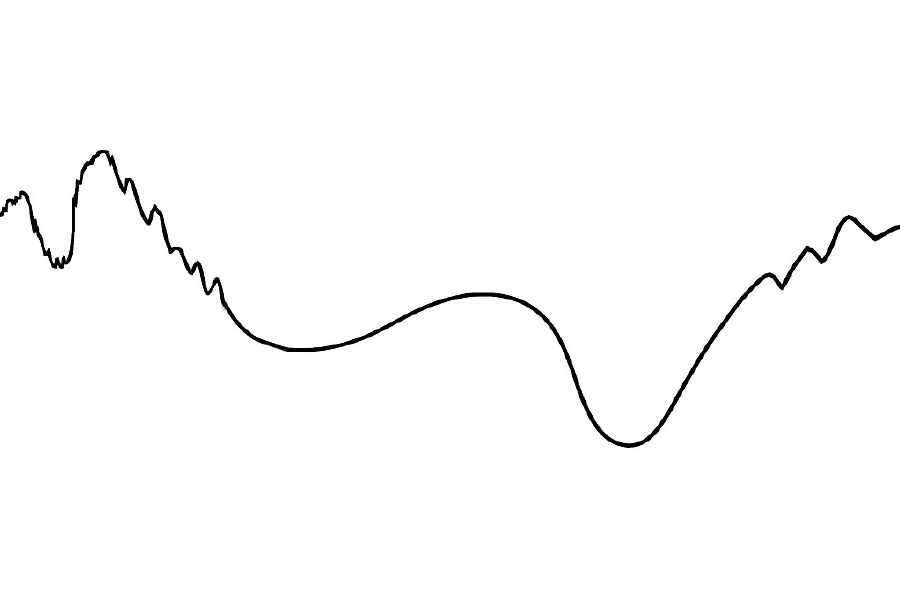};

\node[scale=0.6] at (axis cs:0.1,0.35) {$s_1 = 0.5$};
\node[scale=0.7] at (axis cs:0.1,-0.35) {$\X_1$};
\node[scale=0.6] at (axis cs:0.53,0.35) {$s_2 = 0.8$};
\node[scale=0.7] at (axis cs:0.53,-0.35) {$\X_2$};
\node[scale=0.6] at (axis cs:1.2,0.35) {$s_3 = 3$};
\node[scale=0.7] at (axis cs:1.2,-0.35) {$\X_3$};
\node[scale=0.6] at (axis cs:2.1,0.35) {$s_4 = \frac{5}{6}$};
\node[scale=0.7] at (axis cs:2.1,-0.35) {$\X_4$};
\node[scale=0.6] at (axis cs:2.9,0.35) {$s_5 = 0.9$};
\node[scale=0.7] at (axis cs:2.9,-0.35) {$\X_5$};
\node[scale=0.7,right] at (axis cs:3.1,0) {$\mathcal{X}$};
\node[scale=0.8,right] at (axis cs:3,5) {$f$};
\end{axis}
\draw[dashed] (0.6, 0) -- (0.6, 2.8);
\draw[dashed] (1.4, 0) -- (1.4, 2);
\draw[dashed] (3.1, 0) -- (3.1, 1.95);
\draw[dashed] (4.95, 0) -- (4.95, 2);
\draw[thick] (0, 0) -- (5.9, 0);
\end{tikzpicture}
\caption{Example of inhomogeneous function ($d=1$).}
\label{fig:inhomogeneous_function}
\end{wrapfigure}
To model spatially varying smoothness, we define the local Besov regularity of a function $f$ over each region $\X_n$ as
\begin{equation}
\label{eq:local_smoothness}
\textstyle
s_n := \sup\left\{\alpha : f_{|\X_n} \in B_{pq}^\alpha(\X_n) \right\} \geq s,
\end{equation}
where the restriction $f_{|\X_n}$ belongs to the Besov space $B_{pq}^{s_n}$ over the domain $\X_n$ for fixed global parameters $1 \leq p,q \leq \infty$. For each region, we denote by $\|f\|_{s_n}$ the corresponding local Besov norm \eqref{eq:def_besov}.\\
More generally, one could define 'fully' local Besov spaces via triplets $(s_n, p_n, q_n)$, allowing both the smoothness and the integrability parameters to vary across regions. We leave the analysis of such fully adaptive schemes to future work, and we focus on local adaptation in terms of smoothness only.



We prove a regret bound for Algorithm~\ref{alg:online_adaptive_wavelet}, expressed in terms of the local regularity of any competitor in a Besov space and that achieves minimax-optimal rates with convex or exp-concave loss functions. 

\begin{theorem}
\label{theorem:local_besov_regret} 
Let $T \geq 1, 1 \leq p,q \leq \infty, s > \frac{d}{p} > \varepsilon$. Let $f \in B_{pq}^s(\X)$ and $B \geq \|f\|_\infty$. Let $\T'$ be any pruning of $\T$, together with a collection of local smoothness indices $(s_n)_{n \in \L(\T')}$ and of local norms $(\|f\|_{s_n})_{n \in \L(\T')}$ defined as in \eqref{eq:local_smoothness}. Then, under the same assumptions of Theorem~\ref{theorem:regret_besov} and Assumption~\ref{assumption:second_order_algo}, Algorithm~\ref{alg:online_adaptive_wavelet} satisfies
\[
R_T(f) \lesssim G \sum_{n \in \L(\T')} \bigg( B^{1-\frac{d}{2s_n}}(2^{-\l(n)s_n}\|f\|_{s_n})^{\frac{d}{2s_n}}\sqrt{|T_n|}\ind{s_n \geq \frac{d}{2}} + \big(2^{-\l(n)s_n}\|f\|_{s_n} |T_n|^{1-\frac{s_n}{d}}\big)\ind{s_n < \frac{d}{2}} + B \sqrt{|T_n|}\bigg)
\]
and moreover we also have, if $(\ell_t)$ are exp-concave:
\[
R_T(f) \lesssim G\sum_{n\in \L(\T')} \bigg(B^{1 - \frac{2d}{2s_n+d}}\big(2^{-\l(n)s_n}\|f\|_{s_n}\big)^{\frac{2d}{2s_n + d}}|T_n|^{\frac{d}{2s_n+d}}\ind{s_n \geq \frac{d}{2}} + 2^{-\l(n)s_n}\|f\|_{s_n} |T_n|^{1-\frac{s_n}{d}}\ind{s_n < \frac{d}{2}} + B\bigg),
\]
where $\lesssim$ hides logarithmic factors in $T$, and constants independent of $f$ or $T$.
\end{theorem}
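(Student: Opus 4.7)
The plan is to decompose the total regret across the partition $(\X_n)_{n \in \L(\T')}$ induced by the pruning, and control each region separately by combining \emph{(i)} the sleeping-expert aggregation bound of Assumption~\ref{assumption:second_order_algo} with \emph{(ii)} a localized version of Theorem~\ref{theorem:regret_besov}. Setting $T_n := \{t \le T : x_t \in \X_n\}$, the total regret splits as $R_T(f) = \sum_{n \in \L(\T')} R_n$, where $R_n := \sum_{t \in T_n}[\ell_t(\hat f_t(x_t)) - \ell_t(f(x_t))]$.

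For each leaf $n$ with $j_0 := \l(n)$, I would single out one comparator expert $e_n = (n, \a_n^\star) \in \mathcal E$, where $\a_n^\star \in \mathcal A$ is the closest grid point (precision $T^{-1/2}$) to the true scaling coefficients $\alphabold_{j_0}^{(n)}(f)$ of $f$ at scale $j_0$ on $\X_n$, and split $R_n = (A_n) + (B_n)$ with
\[
(A_n) := \sum_{t \in T_n}\big[\ell_t(\hat f_t(x_t)) - \ell_t(\hat f_{e_n,t}(x_t))\big], \qquad (B_n) := \sum_{t \in T_n}\big[\ell_t(\hat f_{e_n,t}(x_t)) - \ell_t(f(x_t))\big].
\]
Since $e_n$ is active precisely on $T_n$, the sleeping reduction (line~\ref{alg:line_sleeping}) combined with Assumption~\ref{assumption:second_order_algo} bounds $(A_n) \lesssim \tilde G \sqrt{|T_n| \log|\mathcal E|}$. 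Clipping yields $\tilde G \le 2GB$, and $|\mathcal E| \le |\N(\T)| |\mathcal A|^\lambda = \mathrm{poly}(T)$, so $(A_n) \lesssim G B \sqrt{|T_n|}$ up to $\log T$ factors. For exp-concave losses the same second-order bound combined with curvature removes the $\sqrt{|T_n|}$ dependence, producing the $O(B)$ contribution per leaf.

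To bound $(B_n)$, I would apply a localized version of Theorem~\ref{theorem:regret_besov}: $\hat f_{e_n,t}$ is the output of Algorithm~\ref{alg:training} restricted to $\X_n$, started at scale $j_0 = \l(n)$, truncated at $J$, and initialized with frozen scaling coefficients $\a_n^\star$. The grid error $\|\a_n^\star - \alphabold_{j_0}^{(n)}(f)\|_\infty \lesssim T^{-1/2}$ contributes a negligible term via Assumption~\ref{assumption:paramfree}. The rest of the argument mirrors the proof of Theorem~\ref{theorem:regret_besov} but on a cube of side $2^{-\l(n)}$: at each scale $j \ge j_0$ only $O(2^{(j - j_0)d})$ detail coefficients intersect $\X_n$, and by the wavelet characterization \eqref{eq:def_besov}, $\|\betabold_j^{(n)}(f)\|_p \lesssim 2^{-j s'_n}\|f\|_{s_n}$ with $s'_n = s_n + d/2 - d/p$. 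Balancing the truncation-at-$J^\star$ approximation error against the parameter-free per-coefficient optimization cost yields the prefactor $B^{1-d/(2s_n)}(2^{-\l(n) s_n}\|f\|_{s_n})^{d/(2s_n)} \sqrt{|T_n|}$ in the convex case, and the analogous $B^{1-2d/(2s_n+d)}(2^{-\l(n) s_n}\|f\|_{s_n})^{2d/(2s_n+d)}|T_n|^{d/(2s_n+d)}$ in the exp-concave case (replacing the parameter-free updates by second-order ones that exploit curvature). When $s_n < d/2$ the approximation error dominates, the optimal truncation saturates at $J^\star = J$, and the rate degrades to the slower $|T_n|^{1 - s_n/d}$ appearing in both cases.

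The main obstacle is the last step: tracking the $2^{-\l(n) s_n}$ prefactor precisely when shifting the starting scale of the wavelet expansion from $0$ to $\l(n)$, while simultaneously reducing the number of active coefficients and identifying the correct local Besov norm on $\X_n$ that enters Assumption~\ref{assumption:paramfree}. A secondary subtlety is the coordination of sleeping across leaves: because only one expert per leaf is truly competitive, the $\log|\mathcal E|$ overhead is paid only once per region, which is what permits the clean summation over $n \in \L(\T')$ and the appearance of the pruning decomposition in the final bound.
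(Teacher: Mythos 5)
Your decomposition into $(A_n)+(B_n)$ and the way you control each piece reproduces, essentially verbatim, the paper's \emph{intermediate} result (Theorem~\ref{theorem:local_besov_regret_all_pruning} in Appendix~\ref{appendix:proof_theorem:local_besov_regret}): aggregation cost $B\sqrt{|T_n|}$ per leaf (or $B$ per leaf under exp-concavity, via the curvature term at the \emph{aggregation} level) plus a localized Theorem~\ref{theorem:regret_besov} bound $\|f\|_{s_n}2^{-\l(n)s_n}|T_n|^{r(s_n,p)}$ per leaf. That part is sound. The gap is in how you claim to get from there to the stated bounds.

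The interpolated prefactors $B^{1-\frac{d}{2s_n}}(2^{-\l(n)s_n}\|f\|_{s_n})^{\frac{d}{2s_n}}$ and the exp-concave rate $|T_n|^{\frac{d}{2s_n+d}}$ cannot come from ``balancing the truncation-at-$J^\star$ approximation error against the parameter-free per-coefficient optimization cost'' inside a single leaf, nor from ``replacing the parameter-free updates by second-order ones.'' The local wavelet regret on $\X_n$ contains no term in $B$ to trade against ($B$ enters only through the aggregation layer), and Algorithm~\ref{alg:training}'s coefficient updates are the same parameter-free updates whether or not the losses are exp-concave --- curvature is exploited only in the expert-aggregation bound. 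Your mechanism would therefore only yield $B\sqrt{|T_n|}+\|f\|_{s_n}2^{-\l(n)s_n}|T_n|^{r_n}$ per leaf, which is strictly weaker than the theorem when $2^{-\l(n)s_n}\|f\|_{s_n}\gg B$. The missing idea is the \emph{extension of the pruning}: the paper applies the intermediate bound not to $\T'$ but to $\T'_{\mathrm{ext}}$, obtained by growing a subtree of depth $h_n$ (hence $2^{h_n d}$ sub-leaves) below each leaf $n\in\L(\T')$. Each sub-leaf starts its wavelet expansion at scale $\l(n)+h_n$, shrinking the local norm factor to $2^{-(\l(n)+h_n)s_n}\|f\|_{s_n}$, at the price of $2^{h_n d}$ additional aggregation terms costing $B\sqrt{2^{h_n d}|T_n|}$ (convex, via Jensen) or $B\,2^{h_n d}$ (exp-concave). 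Optimizing $h_n$ per leaf --- e.g.\ $h_n\approx\frac{1}{s_n}\log_2(2^{-\l(n)s_n}\|f\|_{s_n}B^{-1})$ in the convex case and $h_n\approx\frac{1}{2s_n+d}\log_2\big((B^{-1}2^{-\l(n)s_n}\|f\|_{s_n})^2|T_n|\big)$ in the exp-concave case --- is exactly what produces the geometric-mean prefactors and the $|T_n|^{\frac{d}{2s_n+d}}$ exponent (and explains why the rate saturates to $|T_n|^{1-\frac{s_n}{d}}$ with $h_n=0$ when $s_n<\frac{d}{2}$). Without this step your argument does not reach the claimed statement.
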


The proof of Theorem~\ref{theorem:local_besov_regret} is deferred to Appendix~\ref{appendix:proof_theorem:local_besov_regret}.
Taking $\T'$ as the pruning corresponding to the root of $\T$, Theorem~\ref{theorem:local_besov_regret} yields minimax-optimal rates in the case $s = \min_{n} s_n > \tfrac{d}{p}$, both for convex and exp-concave losses simultaneously. 
Importantly, the local adaptivity of Algorithm~\ref{alg:online_adaptive_wavelet} is reflected in the regret bounds in Theorem~\ref{theorem:local_besov_regret}, which now depend on the \emph{local} Besov regularity of the target function. This is especially advantageous in inhomogeneous settings where the function alternates between smooth and highly irregular regions; see Figure~\ref{fig:inhomogeneous_function} for an illustrative example. In such cases, our approach can substantially improve the overall regret compared to classical global adaptive methods that aim to recover the largest (but worst case) smoothness exponent $s$ such that $f \in B^s_{pq}(\X)$, assuming the semi-norm $\|f\|_{B_{pq}^s}$ is uniformly bounded.

\paragraph{Adaptive trade-off between smoothness and norm.}
To illustrate the benefit of our strategy, consider the function $f:x \in[0,1] \mapsto x^s$, with $s \in (1,2)$. We have $f \in \C^s(\mathcal{X})$, with global semi-norm $|f|_s < \infty$ (as defined in \eqref{eq:holder}). However, the semi-norm $|f|_s$ becomes large near $x = 0$ due to the unbounded second derivative, which equals $s(s-1)x^{s-2}$ with $s < 2$ and explodes when $x \to 0$. As a result, this directly affects the regret bound of non-local algorithms.
Now consider a partition $\X_1 = [0,\delta]$ and $\X_2 = (\delta,1]$ induced by some pruning, with $\delta = 2^{-j_0}$ for some $j_0 \geq 1$. The function $f$ belongs to $\C^s(\X_1)$ near $x = 0$, and to $\C^2(\X_2)$ with bounded semi-norm $|f|_2$ over $\X_2$. Estimating $f$ under this higher regularity on $\X_2$ yields improved guarantees. Our adaptive algorithm automatically exploits this spatial inhomogeneity by focusing on the relevant local smoothness \emph{and} local semi-norm, leading to improved overall performance.
Indeed, applying Theorem~\ref{theorem:local_besov_regret} to functions in $\C^s(\mathcal{X})$, with $s > \tfrac{1}{2}$ and exp-concave losses, yields:
\begin{equation}
\label{eq:example}
R_T(f) \lesssim (|f|_{s}|\mathcal{X}_1|^s)^\frac{2}{2s+1}|T_1|^\frac{1}{2s+1} + (|f|_{2}|\mathcal{X}_2|^2)^\frac{2}{5}|T_2|^\frac{1}{5},
\end{equation}
where $|f|_2 = \sup_{x \in (\delta,1]} |f''(x)| = s(s-1)\delta^{s-2}$, and $|f|_s < \infty$ by definition.
Equation~\eqref{eq:example} illustrates a trade-off: if $|T_1|$ is negligible - i.e., only a small fraction of the data falls near $0$ - then the second term dominates, and we obtain a regret rate of $O(T^{\nicefrac{1}{5}})$.
Conversely, we incur the worst-case rate $O(T^{\nicefrac{1}{(2s+1)}})$, but it is diluted by the small measure of $|\X_1|=\delta$. Finally, consider the case where the data are uniformly distributed, i.e., $|T_1| \approx \delta T$ and $|T_2| \approx (1-\delta)T$. We obtain:
\[
R_T(f) \lesssim \delta^{\frac{2s}{2s+1}}(\delta T)^{\frac{1}{2s+1}} + \delta^{\frac{2(s-2)}{5}}(1-\delta)T^{\frac{1}{5}} \leq \delta T^{\frac{1}{2s+1}} + \delta^{\frac{2(s-2)}{5}}T^{\frac{1}{5}}.
\]
Optimizing over \(\delta\), which is automatically handled by our procedure that selects the best pruning, yields a regret rate of \(O(T^r)\) with \(r \in (\nicefrac{1}{5}, \nicefrac{1}{(2s+1)})\), improving upon the worst-case rate \(O(T^{\nicefrac{1}{(2s+1)}})\) achieved by any non-local algorithm.

\section*{Conclusion and perspectives}

We proposed adaptive wavelet-based algorithms and analyzed them in the competitive online learning framework against comparator functions in general Besov spaces. Our algorithms achieve minimax-optimal regret guarantees while adapting simultaneously to the regularity of the target function, the convexity properties of the loss functions, and spatially inhomogeneous smoothness - resulting in significant improvements over globally-tuned methods. A limitation of our algorithm, which is common with wavelet-based methods, is the exponential increase in dimensional complexity with the regularity parameter $S$.
An interesting parallel can be drawn with traditional wavelet thresholding methods \cite{donoho1998minimax} used in the batch setting: in an online manner, our parameter-free algorithm implicitly mimics their behavior by selectively updating coefficients across scales, without requiring explicit thresholds or prior knowledge of the function's regularity.
Finally, like most prior work, we focused on functions embedded in $L^\infty$ (i.e., with $s > \tfrac{d}{p}$). A natural and compelling direction for future research is to extend this analysis to competitors in general $L^p$ spaces with $p < \infty$, which would likely require new analytical tools beyond those used here.

\section*{Acknowledgments}
The authors would like to thank Stéphane Jaffard and Albert Cohen for insightful discussions and suggestions on wavelet and approximation theory. P.L. is supported by a PhD scholarship from the Sorbonne Center for Artificial Intelligence (SCAI), Paris.

\bibliographystyle{plainnat}
\bibliography{biblio}

\begin{thebibliography}{44}
\providecommand{\natexlab}[1]{#1}
\providecommand{\url}[1]{\texttt{#1}}
\expandafter\ifx\csname urlstyle\endcsname\relax
  \providecommand{\doi}[1]{doi: #1}\else
  \providecommand{\doi}{doi: \begingroup \urlstyle{rm}\Url}\fi

\bibitem[Binev et~al.(2005)Binev, Cohen, Dahmen, DeVore, Temlyakov, and
  Bartlett]{binev2005universal}
Peter Binev, Albert Cohen, Wolfgang Dahmen, Ronald DeVore, Vladimir Temlyakov,
  and Peter Bartlett.
\newblock Universal algorithms for learning theory part i: Piecewise constant
  functions.
\newblock \emph{Journal of Machine Learning Research}, 6\penalty0 (9), 2005.

\bibitem[Binev et~al.(2007)Binev, Cohen, Dahmen, and
  DeVore]{binev2007universal}
Peter Binev, Albert Cohen, Wolfgang Dahmen, and Ronald DeVore.
\newblock Universal algorithms for learning theory. part ii: Piecewise
  polynomial functions.
\newblock \emph{Constructive approximation}, 26:\penalty0 127--152, 2007.

\bibitem[Calandriello et~al.(2017{\natexlab{a}})Calandriello, Lazaric, and
  Valko]{calandriello2017efficient}
Daniele Calandriello, Alessandro Lazaric, and Michal Valko.
\newblock Efficient second-order online kernel learning with adaptive
  embedding.
\newblock \emph{Advances in Neural Information Processing Systems}, 30,
  2017{\natexlab{a}}.

\bibitem[Calandriello et~al.(2017{\natexlab{b}})Calandriello, Lazaric, and
  Valko]{calandriello2017second}
Daniele Calandriello, Alessandro Lazaric, and Michal Valko.
\newblock Second-order kernel online convex optimization with adaptive
  sketching.
\newblock In \emph{International Conference on Machine Learning}, pages
  645--653. PMLR, 2017{\natexlab{b}}.

\bibitem[Cesa-Bianchi and Lugosi(1999)]{cesa1999prediction}
Nicolo Cesa-Bianchi and G{\'a}bor Lugosi.
\newblock On prediction of individual sequences.
\newblock \emph{The Annals of Statistics}, 27\penalty0 (6):\penalty0
  1865--1895, 1999.

\bibitem[Cesa-Bianchi and Lugosi(2006)]{cesa2006prediction}
Nicolo Cesa-Bianchi and G{\'a}bor Lugosi.
\newblock \emph{Prediction, learning, and games}.
\newblock Cambridge university press, 2006.

\bibitem[Cesa-Bianchi et~al.(2017)Cesa-Bianchi, Gaillard, Gentile, and
  Gerchinovitz]{cesa2017algorithmic}
Nicol{\`o} Cesa-Bianchi, Pierre Gaillard, Claudio Gentile, and S{\'e}bastien
  Gerchinovitz.
\newblock Algorithmic chaining and the role of partial feedback in online
  nonparametric learning.
\newblock In \emph{Conference on Learning Theory}, pages 465--481. PMLR, 2017.

\bibitem[Cohen(2003)]{cohen2003numerical}
Albert Cohen.
\newblock \emph{Numerical analysis of wavelet methods}, volume~32.
\newblock Elsevier, 2003.

\bibitem[Cohen et~al.(1992)Cohen, Daubechies, and
  Feauveau]{cohen1992biorthogonal}
Albert Cohen, Ingrid Daubechies, and J-C Feauveau.
\newblock Biorthogonal bases of compactly supported wavelets.
\newblock \emph{Communications on pure and applied mathematics}, 45\penalty0
  (5):\penalty0 485--560, 1992.

\bibitem[Cutkosky and Orabona(2018)]{cutkosky2018black}
Ashok Cutkosky and Francesco Orabona.
\newblock Black-box reductions for parameter-free online learning in banach
  spaces.
\newblock In \emph{Conference On Learning Theory}, pages 1493--1529. PMLR,
  2018.

\bibitem[Daubechies(1992)]{daubechies1992ten}
Ingrid Daubechies.
\newblock \emph{Ten lectures on wavelets}.
\newblock SIAM, 1992.

\bibitem[DeVore(1998)]{devore1998nonlinear}
Ronald~A DeVore.
\newblock Nonlinear approximation.
\newblock \emph{Acta numerica}, 7:\penalty0 51--150, 1998.

\bibitem[DeVore and Lorentz(1993)]{devore1993constructive}
Ronald~A DeVore and George~G Lorentz.
\newblock \emph{Constructive approximation}, volume 303.
\newblock Springer Science \& Business Media, 1993.

\bibitem[Donoho and Johnstone(1994)]{donoho1994ideal}
David~L Donoho and Iain~M Johnstone.
\newblock Ideal spatial adaptation by wavelet shrinkage.
\newblock \emph{biometrika}, 81\penalty0 (3):\penalty0 425--455, 1994.

\bibitem[Donoho and Johnstone(1998)]{donoho1998minimax}
David~L Donoho and Iain~M Johnstone.
\newblock Minimax estimation via wavelet shrinkage.
\newblock \emph{The annals of Statistics}, 26\penalty0 (3):\penalty0 879--921,
  1998.

\bibitem[Donoho et~al.(1996)Donoho, Johnstone, Kerkyacharian, and
  Picard]{donoho1996density}
David~L Donoho, Iain~M Johnstone, G{\'e}rard Kerkyacharian, and Dominique
  Picard.
\newblock Density estimation by wavelet thresholding.
\newblock \emph{The Annals of statistics}, pages 508--539, 1996.

\bibitem[Gaillard and Gerchinovitz(2015)]{gaillard2015chaining}
Pierre Gaillard and Sébastien Gerchinovitz.
\newblock A chaining algorithm for online nonparametric regression.
\newblock In \emph{Conference on Learning Theory}, pages 764--796. PMLR, 2015.

\bibitem[Gaillard et~al.(2014)Gaillard, Stoltz, and
  Van~Erven]{gaillard2014second}
Pierre Gaillard, Gilles Stoltz, and Tim Van~Erven.
\newblock A second-order bound with excess losses.
\newblock In \emph{Conference on Learning Theory}, pages 176--196. PMLR, 2014.

\bibitem[Gammerman et~al.(2004)Gammerman, Kalnishkan, and
  Vovk]{gammerman2004line}
Alex Gammerman, Yuri Kalnishkan, and Vladimir Vovk.
\newblock On-line prediction with kernels and the complexity approximation
  principle.
\newblock In \emph{Proceedings of the 20th conference on Uncertainty in
  artificial intelligence}, pages 170--176, 2004.

\bibitem[Gin{\'e} and Nickl(2021)]{gine2021mathematical}
Evarist Gin{\'e} and Richard Nickl.
\newblock \emph{Mathematical foundations of infinite-dimensional statistical
  models}.
\newblock Cambridge university press, 2021.

\bibitem[Hanneke et~al.(2024)Hanneke, Kontorovich, and
  Kornowski]{hanneke2024efficient}
Steve Hanneke, Aryeh Kontorovich, and Guy Kornowski.
\newblock Efficient agnostic learning with average smoothness.
\newblock In \emph{International Conference on Algorithmic Learning Theory},
  pages 719--731. PMLR, 2024.

\bibitem[H{\"a}rdle et~al.(2012)H{\"a}rdle, Kerkyacharian, Picard, and
  Tsybakov]{hardle2012wavelets}
Wolfgang H{\"a}rdle, Gerard Kerkyacharian, Dominique Picard, and Alexander
  Tsybakov.
\newblock \emph{Wavelets, approximation, and statistical applications}, volume
  129.
\newblock Springer Science \& Business Media, 2012.

\bibitem[Hazan and Megiddo(2007)]{hazan2007online}
Elad Hazan and Nimrod Megiddo.
\newblock Online learning with prior knowledge.
\newblock In \emph{Learning Theory: 20th Annual Conference on Learning Theory,
  COLT 2007, San Diego, CA, USA; June 13-15, 2007. Proceedings 20}, pages
  499--513. Springer, 2007.

\bibitem[Hazan et~al.(2016)]{hazan2016introduction}
Elad Hazan et~al.
\newblock Introduction to online convex optimization.
\newblock \emph{Foundations and Trends{\textregistered} in Optimization},
  2\penalty0 (3-4):\penalty0 157--325, 2016.

\bibitem[J{\'e}z{\'e}quel et~al.(2019)J{\'e}z{\'e}quel, Gaillard, and
  Rudi]{jezequel2019efficient}
R{\'e}mi J{\'e}z{\'e}quel, Pierre Gaillard, and Alessandro Rudi.
\newblock Efficient online learning with kernels for adversarial large scale
  problems.
\newblock \emph{Advances in Neural Information Processing Systems}, 32, 2019.

\bibitem[Koolen and Van~Erven(2015)]{koolen2015second}
Wouter~M Koolen and Tim Van~Erven.
\newblock Second-order quantile methods for experts and combinatorial games.
\newblock In \emph{Conference on Learning Theory}, pages 1155--1175. PMLR,
  2015.

\bibitem[Kornowski et~al.(2023)Kornowski, Hanneke, and
  Kontorovich]{kornowski2023near}
Guy Kornowski, Steve Hanneke, and Aryeh Kontorovich.
\newblock Near-optimal learning with average h{\"o}lder smoothness.
\newblock \emph{Advances in Neural Information Processing Systems},
  36:\penalty0 21135--21157, 2023.

\bibitem[Kuzborskij and Cesa-Bianchi(2020)]{kuzborskij2020locally}
Ilja Kuzborskij and Nicolo Cesa-Bianchi.
\newblock Locally-adaptive nonparametric online learning.
\newblock \emph{Advances in Neural Information Processing Systems},
  33:\penalty0 1679--1689, 2020.

\bibitem[Liautaud et~al.(2025)Liautaud, Gaillard, and
  Wintenberger]{liautaud2025minimax}
Paul Liautaud, Pierre Gaillard, and Olivier Wintenberger.
\newblock Minimax-optimal and locally-adaptive online nonparametric regression.
\newblock In \emph{Algorithmic Learning Theory}, pages 702--735. PMLR, 2025.

\bibitem[Mallat(1999)]{mallat1999wavelet}
Stéphane Mallat.
\newblock \emph{A Wavelet Tour of Signal Processing}.
\newblock Academic Press, 1999.

\bibitem[Meyer(1990)]{meyer1990ondelettes}
Yves Meyer.
\newblock Ondelettes et op{\'e}rateurs.
\newblock \emph{I: Ondelettes}, 1990.

\bibitem[Mhammedi and Koolen(2020)]{mhammedi2020lipschitz}
Zakaria Mhammedi and Wouter~M Koolen.
\newblock Lipschitz and comparator-norm adaptivity in online learning.
\newblock In \emph{Conference on Learning Theory}, pages 2858--2887. PMLR,
  2020.

\bibitem[Orabona and P{\'a}l(2016)]{orabona2016coin}
Francesco Orabona and D{\'a}vid P{\'a}l.
\newblock Coin betting and parameter-free online learning.
\newblock \emph{Advances in Neural Information Processing Systems}, 29, 2016.

\bibitem[Rakhlin and Sridharan(2014)]{rakhlin2014online}
Alexander Rakhlin and Karthik Sridharan.
\newblock Online non-parametric regression.
\newblock In \emph{Conference on Learning Theory}, pages 1232--1264. PMLR,
  2014.

\bibitem[Rakhlin and Sridharan(2015)]{rakhlin2015online}
Alexander Rakhlin and Karthik Sridharan.
\newblock Online nonparametric regression with general loss functions.
\newblock \emph{arXiv preprint arXiv:1501.06598}, 2015.

\bibitem[Ro{\v{c}}kov{\'a} and Rousseau(2024)]{rovckova2024ideal}
Veronika Ro{\v{c}}kov{\'a} and Judith Rousseau.
\newblock Ideal bayesian spatial adaptation.
\newblock \emph{Journal of the American Statistical Association}, 119\penalty0
  (547):\penalty0 2078--2091, 2024.

\bibitem[Triebel(1983)]{triebel1983theory}
Hans Triebel.
\newblock \emph{Theory of Function Spaces}, volume~78 of \emph{Monographs in
  Mathematics}.
\newblock Birkhäuser, 1983.

\bibitem[Triebel(2006)]{triebel2006theory}
Hans Triebel.
\newblock \emph{Theory of Function Spaces}.
\newblock Birkhäuser, 2006.

\bibitem[Vovk(2006{\natexlab{a}})]{vovk2006line}
Vladimir Vovk.
\newblock On-line regression competitive with reproducing kernel hilbert
  spaces.
\newblock In \emph{International Conference on Theory and Applications of
  Models of Computation}, pages 452--463. Springer, 2006{\natexlab{a}}.

\bibitem[Vovk(2006{\natexlab{b}})]{vovk2006metric}
Vladimir Vovk.
\newblock Metric entropy in competitive on-line prediction.
\newblock \emph{arXiv preprint cs/0609045}, 2006{\natexlab{b}}.

\bibitem[Vovk(2007)]{vovk2007competing}
Vladimir Vovk.
\newblock Competing with wild prediction rules.
\newblock \emph{Machine Learning}, 69:\penalty0 193--212, 2007.

\bibitem[Wintenberger(2017)]{wintenberger2017optimal}
Olivier Wintenberger.
\newblock Optimal learning with bernstein online aggregation.
\newblock \emph{Machine Learning}, 106:\penalty0 119--141, 2017.

\bibitem[Zadorozhnyi et~al.(2021)Zadorozhnyi, Gaillard, Gerschinovitz, and
  Rudi]{zadorozhnyi2021online}
Oleksandr Zadorozhnyi, Pierre Gaillard, Sebastien Gerschinovitz, and Alessandro
  Rudi.
\newblock Online nonparametric regression with sobolev kernels.
\newblock \emph{arXiv preprint arXiv:2102.03594}, 2021.

\bibitem[Zhang et~al.(2023)Zhang, Cutkosky, and
  Paschalidis]{zhang2023unconstrained}
Zhiyu Zhang, Ashok Cutkosky, and Yannis Paschalidis.
\newblock Unconstrained dynamic regret via sparse coding.
\newblock \emph{Advances in Neural Information Processing Systems},
  36:\penalty0 74636--74670, 2023.

\end{thebibliography}

\newpage

\appendix

\begin{center} 
\LARGE \textsc{Appendix}
\end{center}

\tableofcontents

\newpage
\section{Proof of Theorem~\ref{theorem:regret_besov}}

\label{appendix:proof_theorem:regret_besov}




Let $1 \leq p,q \leq \infty, 0< s < S$ and $f \in \arg \min_{f \in B_{pq}^s(\X)} \sumT \ell_t(f(x_t))$ the best function to fit the $T$ data over $\X \times [-B,B]$. We start the proof with a decomposition of regret, with any oracle function $f^* \in \R^\X$, as \begin{align*} R_T( f) &= \sumT \ell_t(\hat f_t(x_t)) - \ell_t(f(x_t)) \\
&= \sumT \ell_t(\hat f_t(x_t)) - \ell_t( f^*(x_t)) \quad + \quad \sumT \ell_t( f^*(x_t)) - \ell_t(f(x_t)) \\
&= \text{estimation regret} \quad + \quad \text{ approximation regret}.
\end{align*}

\paragraph{Nonlinear oracle.}
 Let $j_0 \in \mathbb N$ and $J \geq j_0$ to be optimized. We first recall that we use a wavelet development defined for any $J \geq j_0$ as \begin{equation} \label{eq:truncated}
f_J(x) := \sum_{k \in \bar \Lambda_{j_0}} \alpha_{j_0,k}\phi_{j_0,k}(x) + \sum_{j=j_0}^J \sum_{k \in \Lambda_j} \beta_{j,k} \psi_{j,k}(x),\end{equation}
with $\alpha_{j_0,k} = \langle f, \phi_{j_0,k}\rangle$ and $\beta_{j,k} = \langle f, \psi_{j,k}\rangle, j \geq j_0$, is a truncated wavelet expansion up to level $J \geq j_0$.\\
Using a truncated approximation $f_J$ with a large value of $J$ in~\eqref{eq:truncated} can lead to suboptimal regret performance, as it requires estimating a large number of wavelet coefficients, thereby incurring a high estimation error. To address this, we introduce a nonlinear oracle $f^*$ that depends only on a selected subset of coefficients across the $J$ levels. This approach, known as best-term or nonlinear approximation, is surveyed in the textbook~\cite{devore1998nonlinear}, while constructions close to us in spirit can be found in~\cite{donoho1996density,donoho1998minimax}.
We now make this oracle explicit and show that it balances approximation and estimation errors, in particular achieving minimax optimality in our setting. We define the oracle as
\begin{equation}
\label{eq:nonlinear_oracle}
    f^* = f_{J^*} + f_{\Lambda^*},
\end{equation}
where $f_{J^*}$ is a truncated wavelet expansion up to level $J^* \leq J$, as in~\eqref{eq:truncated} (i.e. we keep all the detail coefficients up to level $J^*$), and the nonlinear part
\[
f_{\Lambda^*} = \sum_{(j,k) \in \Lambda^*} \beta_{j,k} \psi_{j,k}, \quad \Lambda^* \subset \{(j,k): k \in \Lambda_j,\ J^* < j \leq J\},
\]
uses only wavelet coefficients indexed by an oracle set $\Lambda^*$ drawn from the finer scales $j \in (J^*, J]$. The cardinality of $\Lambda^*$, i.e., the number of retained coefficients, will be optimized in the analysis. 

\underline{Intuition:} The component $f_{\Lambda^*}$ of the oracle consists of the $|\Lambda^*|$ largest coefficients 
chosen adaptively from the fine scales greater than $J^*$. The procedure is termed \emph{nonlinear} because 
the choice of coefficients varies with the function $f$, rather than being a fixed 
linear rule, contrary to the first $J^*$ levels which keep all coefficients 
independently of the function.

For any $k \in \Lambda_j, j > j_0$, define $v_{j,k} := \beta_{j,k}2^{js'}$ with $s' = s + \frac{d}{2} - \frac{d}{p}$ as in \eqref{eq:def_besov}. Observe that the definition of the Besov norm \eqref{eq:def_besov} allows a control over the set $\{v_{j,k} : k \in \Lambda_j, J^* < j \leq J\}$ in terms of $\ell^p$-norm since
\begin{equation}
\label{eq:bound_lp}
   \sum_{J^* < j \leq J}\sum_{k \in \Lambda_j} |v_{j,k}|^p \leq (J-J^*)^{(1-\frac{p}{q})_+}\bigg(\sum_{J^* < j \leq J}\bigg(\sum_{k\in \Lambda_j}|v_{j,k}|^p\bigg)^{\frac{q}{p}}\bigg)^{\frac{p}{q}} \leq \big[(J-J^*)^{(\frac{1}{p}-\frac{1}{q})_+}\|f\|_{B_{pq}^s}\big]^p =: C_f^p,
\end{equation}
by Hölder's inequality if $q > p$, else by convexity with $q \leq p$. \\
Let $\Lambda^*$ denote the set of indices corresponding to the $|\Lambda^*|$ largest wavelet coefficients (in absolute value) among all $(v_{j,k})$ with $j \in [J^* + 1, J]$ and $k \in \Lambda_j$. The cardinality $|\Lambda^*|$ - that is, the number of wavelet coefficients retained in the nonlinear component of the oracle estimator~\eqref{eq:nonlinear_oracle} - will be selected later in the analysis as a tuning parameter. Let $j > J^*$. We have that
\begin{equation*}
|\Lambda^*|\cdot \min_{(j,k)\in \Lambda^*} |v_{j,k}|^p \leq \sum_{(j,k)\in \Lambda^*} |v_{j,k}|^p \leq C_f^p < \infty,
\end{equation*}
and in particular since $\forall (j,k) \not \in \Lambda^*, |v_{j,k}| \leq \min_{(j',k')\in \Lambda^*} |v_{j',k'}|$ one has
\begin{equation}
\label{eq:bound_beta_nonlinear}
\forall (j,k) \not \in \Lambda^*, \; |\Lambda^*| |v_{j,k}|^p \leq C_f^p \implies \forall (j,k) \not \in \Lambda^*, \; |\beta_{j,k}|\leq C_f 2^{-js'}|\Lambda^*|^{-\frac{1}{p}}.
\end{equation}

We are now ready to analyze the regret in two steps---that is an \emph{estimation error} and an \emph{approximation error}.

\paragraph{Step 1: Bounding the estimation regret.} We set
\begin{equation*}
    R_1 := \sum_{t=1}^T \ell_t(\hat f_t(x_t)) - \ell_t(f^*(x_t)) 
    = \sum_{t=1}^T \ell_t\left(\textstyle \sum_{(j,k)} c_{j,k,t}\varphi_{j,k}(x_t) \right) - \ell_t\left(\textstyle \sum_{(j,k)} c_{j,k}\varphi_{j,k}(x_t) \right),
\end{equation*}
where the sum is over all scaling and detail coefficients with indices in $\{(j_0,k): k \in \bar \Lambda_{j_0}\} \cup \{(j,k): j \geq j_0,\, k \in \Lambda_{j}\}$, where $c_{j,k}$ stands for either the scaling coefficient $\alpha_{j_0,k}$ or the detail coefficient $\beta_{j,k}$ (and their sequential counterparts $c_{j,k,t}$ depend on $t$), and $\varphi_{j,k}$ denotes either the scaling function $\phi_{j_0,k}$ or the wavelet function $\psi_{j,k}$.

Since $\hat y \mapsto \ell_t(\hat y)$ is convex and both $\hat f_t, f^*$ are linear in the $\{c_{j,k}\}$, then $\ell_t \circ \hat f$ and $\ell_t \circ f^*$ are convex in $\{c_{j,k}\}$ and we have by convexity: \[R_1 \leq \sumT \sum_{j,k} g_{j,k,t} (c_{j,k,t}-c_{j,k}),\]
where $g_{j,k,t} = \ell'_t(\hat f_t(x_t)) \varphi_{j,k}(x_t)$ by Equation \eqref{eq:gradient}.
Then, first by Assumption~\ref{assumption:paramfree}, and second by the structure of the oracle~\eqref{eq:nonlinear_oracle} - namely, that $\forall j > J^*$ such that $(j,k) \not\in \Lambda^*$, we have $c_{j,k} = 0$ - we get:
\begin{align}
R_1 
&\leq \sum_{j,k} \sum_{t=1}^T g_{j,k,t} (c_{j,k,t} - c_{j,k}) \notag \\
&\leq \sum_{j,k} |c_{j,k}| \left( C_1 \sqrt{\textstyle \sum_{t=1}^T |g_{j,k,t}|^2} + C_2 G \right) \notag \\
&= \underbrace{\sum_{j \leq J^*,\, k} |c_{j,k}| \left( C_1 \sqrt{\textstyle \sum_{t=1}^T |g_{j,k,t}|^2} + C_2 G \right)}_{:= R_1(f_{J^*})} 
+ \underbrace{\sum_{\substack{(j,k)\in \Lambda^*\\ j > J^*}} |c_{j,k}| \left( C_1 \sqrt{\textstyle \sum_{t=1}^T |g_{j,k,t}|^2} + C_2 G \right)}_{:= R_1(f_{\Lambda^*})} \label{eq:split_estimation_reg}
\end{align}
where $C_1, C_2 > 0$ are absolute constants (possibly depending on $\log T$; see Assumption~\ref{assumption:paramfree}). The estimation regret $R_1$ is thus controlled in~\eqref{eq:split_estimation_reg} by a sum of individual regrets over the nonzero coefficients $c_{j,k}$ that define $f^* = f_{J^*}+f_{\Lambda^*}$. The sum naturally splits into two parts: the linear part $R_1(f_{J^*})$ over $\{(j_0,k) : k \in \bar\Lambda_{j_0}\}\cup\{(j,k) : j_0 \leq j \leq J^*,\, k \in \Lambda_j\}$, and the nonlinear part $R_1(f_{\Lambda^*})$ over the indices in $\Lambda^*$.
\begin{itemize}[leftmargin=10pt]
    \item \underline{Linear part: bounding $R_{1}(f_{J^*})$.}  
The wavelet basis $\{\phi_{j_0,k},\psi_{j,k}\}$ is assumed to be $S$-regular with $S > s$, so we can invoke the characterization of Besov spaces with $\|f\|_{B_{pq}^s} < \infty$ (see Eq.~\eqref{eq:def_besov}).  
Let $p, p' \geq 1$ be such that $\frac{1}{p} + \frac{1}{p'} = 1$. Applying Hölder's inequality to the detail coefficients at levels $j \in [j_0, J^*]$, we obtain:
\begin{align*}
  \sum_{j_0 \leq j \leq J^*}\sum_{k \in \Lambda_j} |\beta_{j,k}| & \left(C_1\sqrt{\textstyle\sumT |g_{j,k,t}|^2} + C_2G\right) \\
    &\leq \sum_{j_0 \leq j \leq J^*} \big(\sum_{k \in \Lambda_j} |\beta_{j,k}|^p\big)^{\frac{1}{p}} \bigg(C_1 \Big(\sum_{k \in \Lambda_j} \big(\sqrt{\textstyle \sumT |g_{j,k,t}|^2}\big)^{p'} \Big)^{\frac{1}{p'}} + C_2G|\Lambda_j|^{\frac{1}{p'}} \bigg) \\
    &= \sum_{j_0 \leq j \leq J^*}  \|\betabold_j\|_p \bigg(C_1 \sqrt{\textstyle\Big(\sum_{k \in \Lambda_j} \big( \sumT |g_{j,k,t}|^2\big)^{\frac{p'}{2}} \Big)^{\frac{2}{p'}}} + C_2G|\Lambda_j|^{\frac{1}{p'}} \bigg) \\
    & \leq  \sum_{j_0 \leq j \leq J^*} \textstyle \|\betabold_j\|_p  \bigg(C_1  |\Lambda_j|^{(\frac{1}{2} - \frac{1}{p})_+} \sqrt{\sum_{k\in \Lambda_j} \sumT |g_{j,k,t}|^2} + C_2G|\Lambda_j|^{1-\frac{1}{p}}\bigg)
\end{align*}
where the last inequality uses $\|x\|_{\frac{p'}{2}} \leq |\Lambda_j|^{(\frac{2}{p'}-1)_+} \|x\|_1$ for a vector $x$ of dimension $|\Lambda_j|$ and $(\cdot)_+ := \max\{\cdot, 0\}$. 

Repeating for the scaling coefficients for $k \in \bar \Lambda_{j_0}$, summing in $j = j_0,\dots, J^*$ and bounding $|\Lambda_j| \leq \lambda 2^{dj}$ and $|\bar \Lambda_{j_0}| \leq \lambda 2^{dj_0}$, we get:
\begin{multline} \textstyle
\label{eq:intermediate_R1}
    R_1(f_{J^*}) \leq \|\alphabold_{j_0}\|_p 
    \bigg( 
    C_1 \lambda 2^{dj_0(\frac{1}{2} - \frac{1}{p})_+} \sqrt{\sum_{k\in \Lambda_j} \sumT |g_{j_0,k,t}|^2} 
    + 
    C_2 G \lambda 2^{dj(1-\frac{1}{p})} \bigg) \\
    \textstyle
     + \sum_{j=j_0}^{J^*} \|\betabold_j\|_p 
     \bigg(C_1 \lambda 2^{dj (\frac{1}{2} - \frac{1}{p})_+} \sqrt{\sum_{k \in \Lambda_j} \sumT |g_{j,k,t}|^2} + 
     C_2 G \lambda 2^{dj(1 - \frac{1}{p})} \bigg)
\end{multline}
where we recall the scaling coefficients are $\alphabold_{j_0} = (\alpha_{j_0,k})$ and the detail coefficients at scale $j$ are $\betabold_j = (\beta_{j,k})$.\\
On the other hand, over each level $j\geq j_0$, one has
\begin{align}
 \sqrt{ \sum_{k\in \Lambda_j} \sumT |g_{j,k,t}|^2}
&=  \sqrt{ \sum_{k \in \Lambda_j} \sumT |\ell'_t(\hat f_t(x_t))\psi_{j,k}(x_t)|^2} \notag\\
&\leq G \sqrt{ \sum_{k\in \Lambda_j} \sumT |\psi_{j,k}(x_t)|^2} \notag \\
&= G 2^{dj/2}  \sqrt{ \sumT  \sum_{k\in \Lambda_j} |\psi(2^j x_t-k)|^2}  \,, \label{eq:bounding_sum_grad}
\end{align}
where we used the fact that $\ell_t$ has bounded gradient by $G$, the definition of $\psi_{j,k}$ and we applied Jensen's inequality. 
Equation \eqref{eq:bounding_sum_grad} also holds for the scaling level, replacing $\psi_{j,k}$ by $\phi_{j_0,k}$ over the index set $\bar \Lambda_{j_0}$.

By \ref{def:s_regular_wavelet:D2}, one has \[\textstyle \sup_x \sum_{k} |\phi(x-k)|^2 \leq M_\phi\|\phi\|_\infty \quad \text{and} \quad \sup_x \sum_{k} |\psi(x-k)|^2 \leq M_\psi\|\psi\|_\infty.\] With 
$1-\tfrac{1}{p} \leq \tfrac{1}{2} + (\tfrac{1}{2}-\tfrac{1}{p})_+$ we get from \eqref{eq:intermediate_R1} and \eqref{eq:bounding_sum_grad}
\begin{equation}
\label{eq:intermediate_R1_2}
    R_1(f_{J^*}) \leq  \lambda G  \big(C_1 \sqrt{T} + C_2\big) \bigg((M_\phi\|\phi\|_\infty)^{\frac{1}{2}}\|\alphabold_{j_0}\|_p 2^{dj_0 (
    \frac{1}{2} + (\frac{1}{2} - \frac{1}{p})_+)}
    + (M_\psi\|\psi\|_\infty)^{\frac{1}{2}} \sum_{j=j_0}^{J^*} \|\betabold_j\|_p  2^{dj(\frac{1}{2} + (\frac{1}{2} - \frac{1}{p})_+)}\bigg)
\end{equation}

Then since $\|f\|_{B_{pq}^s} < \infty$ in \eqref{eq:def_besov}, we apply Hölder's inequality with $q,q' \geq 1$ that entails
\begin{align*}
     \sum_{j=j_0}^{J^*} \|\betabold_j\|_p 2^{jd (\frac{1}{2} + (\frac{1}{2} - \frac{1}{p})_+)} &= \sum_{j=j_0}^J 2^{-j(s + \frac{d}{2} - \frac{d}{p})}  2^{j(s + \frac{d}{2} - \frac{d}{p})}\|\betabold_j\|_p2^{jd (\frac{1}{2} + (\frac{1}{2} - \frac{1}{p})_+)} \\ 
    &\leq \bigg(\sum_{j=j_0}^{J^*} 2^{-jq'(s -\frac{d}{p} - d (\frac{1}{2} - \frac{1}{p})_+ )}\bigg)^{\frac{1}{q'}} \bigg(\sum_{j=j_0}^J 2^{jq(s + \frac{d}{2} - \frac{d}{p})}\|\betabold_j\|_p^{q}\bigg)^{\frac{1}{q}} \\
    &\leq \|f\|_{B_{pq}^s}\sum_{j=0}^{J^*} 2^{-j(s -\frac{d}{p} - d (\frac{1}{2} - \frac{1}{p})_+)}, \quad \text{since $\|\cdot\|_{q'} \leq \|\cdot\|_1, q'\geq 1$.}
\end{align*}

Finally, we get from \eqref{eq:intermediate_R1_2} with $\|\alphabold_{j_0}\|_p \leq \|f\|_{B_{pq}^s}$ and $M := \max(M_{\phi}\|\phi\|_\infty, M_{\psi}\|\psi\|_\infty) < \infty$:
\begin{equation}
\label{eq:final_R1_linear}
    R_1(f_{J^*}) \leq \lambda G\|f\|_{B_{pq}^s}  \big(C_1M^{\frac{1}{2}} \sqrt{T} + C_2\big)\bigg(2^{dj_0 (\frac{1}{2} + (\frac{1}{2} - \frac{1}{p})_+)}
    + \sum_{j=j_0}^{J^*} 2^{-j\beta}\bigg),
\end{equation}
with $\beta := s -\frac{d}{p} - d (\frac{1}{2} - \frac{1}{p})_+$.

\item \underline{Nonlinear part: bounding $R_1(f_{\Lambda^*})$.} One has by Hölder's inequality over the entire set $\Lambda^*$, with $\frac{1}{p}+\frac{1}{p'}=1$,
\begin{align*}
R_1(f_{\Lambda^*}) 
&= \sum_{(j,k) \in \Lambda^*} |\beta_{j,k}|  \left(C_1\sqrt{\textstyle\sum_{t=1}^T |g_{j,k,t}|^2} + C_2G\right) \\
&= \sum_{(j,k) \in \Lambda^*} |v_{j,k}| 2^{-js'} \left(C_1\sqrt{\textstyle\sum_{t=1}^T |g_{j,k,t}|^2} + C_2G\right) \\
&\leq \bigg( \sum_{(j,k) \in \Lambda^*} |v_{j,k}|^p \bigg)^{\frac{1}{p}} 
\cdot \bigg[ C_1\bigg(\sum_{(j,k) \in \Lambda^*} 2^{-js'p'} \sqrt{\textstyle \sum_{t=1}^T |g_{j,k,t}|^2}^{p'}\bigg)^{\frac{1}{p'}} + C_2 G 2^{-J^*s'}|\Lambda^*|^{\frac{1}{p'}} \bigg],
\end{align*}
where we recall that $v_{j,k} = 2^{js'}\beta_{j,k}$ is defined in \eqref{eq:bound_lp}.
Moreover, for any $(j,k)\in \Lambda^*, t \geq 1, |g_{j,k,t}| \leq 2^{jd/2}\|\psi\|_\infty$ and
\begin{align*}
    \bigg(\sum_{(j,k) \in \Lambda^*} 2^{-js'p'}\sqrt{\textstyle \sum_{t=1}^T |g_{j,k,t}|^2}^{p'}\bigg)^{\frac{1}{p'}}
    &\leq  G\|\psi\|_\infty \sqrt{T} \bigg(\sum_{(j,k) \in \Lambda^*} 2^{-js'p'}\bigg)^{\frac{1}{p'}} \leq  G\|\psi\|_\infty \sqrt{T} 2^{-J^*s'}|\Lambda^*|^{\frac{1}{p'}},
\end{align*}
where the last inequality is because for any $(j,k) \in \Lambda^*, j > J^*$.

Then, using \eqref{eq:bound_lp} we obtain
\begin{equation}
\label{eq:final_R1_nonlinear}
R_1(f_{\Lambda^*}) \leq G (J-J^*)^{(\frac{1}{p} - \frac{1}{q})_+}\|f\|_{B^s_{pq}}\big(C_1\|\psi\|_\infty \sqrt{T} + C_2\big) 2^{-J^*s'}|\Lambda^*|^{1-\frac{1}{p}}.
\end{equation}

\item \underline{Bound on $R_1$:} We use \eqref{eq:final_R1_linear} and \eqref{eq:final_R1_nonlinear} and we reach
\begin{align}
R_1 &\leq  R_1(f_{J^*}) +  R_1(f_{\Lambda^*}) \notag \\ 
&\leq \lambda G\|f\|_{B_{pq}^s}  \big(C_1M^{\frac{1}{2}} \sqrt{T} + C_2\big)\bigg( 2^{dj_0 (\frac{1}{2} + (\frac{1}{2} - \frac{1}{p})_+)}
  + \sum_{j=j_0}^{J^*} 2^{-j\beta}\bigg) \notag \\ 
  &\quad + G (J-J^*)^{(\frac{1}{p} - \frac{1}{q})_+}\|f\|_{B^s_{pq}}\big(C_1\|\psi\|_\infty \sqrt{T} + C_2\big) 2^{-J^*s'}|\Lambda^*|^{1-\frac{1}{p}}, \label{eq:final_R1}
\end{align}
where we recall $\beta := s -\frac{d}{p} - d (\frac{1}{2} - \frac{1}{p})_+$ and $s' = s + \frac{d}{2} - \frac{d}{p}$.
\end{itemize}

\paragraph{Step 2: Bounding the approximation regret.} 

We now bound the term incurred by approximating $f$ by its nonlinear wavelet approximation $f^*$. Using the $G$-Lipschitz property of each loss $\ell_t$ and the uniform bound on the approximation error, we obtain:
\begin{equation}
\label{eq:bound_R2}
R_2 := \sum_{t=1}^T \big( \ell_t(f^*(x_t)) - \ell_t(f(x_t)) \big)
\leq G \sum_{t=1}^T |f^*(x_t) - f(x_t)| 
\leq G T \|f^* - f\|_\infty.
\end{equation}
With $f^* = f_{J^*} + f_{\Lambda^*}$ and $f_J$ the truncated wavelet expansion \eqref{eq:truncated} at level $J \geq J^* \geq j_0$ we have with the triangle inequality
\[
R_2 \leq G T (\|(f_{J^*} + f_{\Lambda^*}) - f_J\|_\infty + \|f_J - f\|_\infty)
\]

Firstly, with $s > \nicefrac{d}{p}$, using the characterizations of Besov spaces and classical results on Sobolev embeddings (see, e.g., \cite[Prop. 4.3.8]{gine2021mathematical} or \cite{cohen2003numerical,devore1993constructive}), $B_{pq}^s(\X) \subset B_{\infty \infty}^{s-\frac{d}{p}}(\X)$, one has 
\begin{equation}
\label{eq:bound_linear_approx}
\|f_{J}-f\|_\infty \leq C\|f\|_{B_{pq}^s}2^{-J(s-\frac{d}{p})},
\end{equation}
where we recall we assume $s > \frac{d}{p}$ and with $C=C(\psi,s,p)$ some constant that only depends on $s,p$ and the wavelet basis. 

Second, since $f_{J^*},f_J$ are both wavelet expansion truncated respectively at level $J^*$ and $J$, one has
\begin{align*}
\|(f_{J^*} + f_{\Lambda^*}) - f_J\|_\infty  &= \bigg\|\sum_{(j,k) \not\in \Lambda^*} \beta_{j,k}\psi_{j,k}\bigg\|_\infty \notag \\
&\leq \sum_{j=J^*+1}^{J} 2^{j\frac{d}{2}} \sup_{k:(j,k)\not\in \Lambda^*} |\beta_{j,k}| \cdot \|\textstyle\sum_{k \in \Lambda_j} |\psi(2^j\cdot-k)|\|_\infty \qquad \leftarrow \text{ by definition of $\psi_{j,k}$ and triangle inequality} \notag \\
&\leq M_\psi C_f |\Lambda^*|^{-\frac{1}{p}} \sum_{j=J^*+1}^{J} 2^{j\frac{d}{2}} 2^{-js'}
 \qquad \leftarrow \text{ by Definition~\ref{def:s_regular_wavelet:D2} and \eqref{eq:bound_beta_nonlinear}} \notag \\ 
&\leq M_\psi C_f |\Lambda^*|^{-\frac{1}{p}} 2^{-J^*(s-\frac{d}{p})} \frac{2^{-(s-\frac{d}{p})}}{1-2^{-(s-\frac{d}{p})}} \qquad \leftarrow \text{ replacing $s'$ and with $s > \frac{d}{p}$.} 
\end{align*}

Finally, with \eqref{eq:bound_linear_approx} one has
\begin{equation}
\label{eq:reg_approx}
    R_2  \leq C_3 G \|f\|_{B^s_{pq}} \big( 2^{-J(s-\frac{d}{p})} T + (J-J^*)^{(\frac{1}{p}-\frac{1}{q})_+}2^{-J^*(s-\frac{d}{p})}|\Lambda^*|^{-\frac{1}{p}}T\big),
\end{equation}
with $C_3 = \max(C, M_\psi2^{-(s-\frac{d}{p})}(1-2^{-(s-\frac{d}{p})})^{-1})$ and $C$ as in \eqref{eq:bound_linear_approx}.

\paragraph{Step 3: Optimization on $J^*,J,|\Lambda^*|$ and conclusion.}

Let $j_0 = 0$. From \eqref{eq:final_R1} and \eqref{eq:reg_approx} we reach the following regret bound
\begin{multline}
    R_T(f) = R_1 + R_2 \leq  C' G \|f\|_{B_{pq}^s}
\Bigg[ 
  \left( C_1 \sqrt{T} + C_2 \right) \bigg(1 + \sum_{j=0}^{J^*} 2^{-j\beta} \bigg) \\
   + (J-J^*)^{(\frac{1}{p} - \frac{1}{q})_+}\big(C_1 \sqrt{T} + C_2\big) 2^{-J^*s'}|\Lambda^*|^{1-\frac{1}{p}}
  \\
  +  2^{-J(s-\frac{d}{p})} T + (J-J^*)^{(\frac{1}{p}-\frac{1}{q})_+}2^{-J^*(s-\frac{d}{p})} |\Lambda^*|^{-\frac{1}{p}} T
\Bigg], \label{eq:to_be_optimized}
\end{multline}
with $C'$ some constant that can change from a line to another (depending on $C_3, \lambda, \|\psi\|_\infty$,...), $\beta = s -\frac{d}{p} - d (\frac{1}{2} - \frac{1}{p})_+$ and $s' = s + \frac{d}{2} - \frac{d}{p}$. We keep the explicit dependence on $J$, $J^*$, and $|\Lambda^*|$, as we now aim to optimize the upper bound with respect to these parameters. 

We now have three different regimes depending on the sign of $\beta$ in \eqref{eq:to_be_optimized}.
Observe that \[
\beta = \begin{cases} s - \frac{d}{2} &\text{if $p \ge 2$,} \\ s - \frac{d}{p} &\text{if $p < 2$,}    
\end{cases}
\]
and we also have $s > \frac{d}{p}$.

\paragraph{Case 1: $\beta > 0$.} This regime corresponds to sufficiently regular functions: since $s > \frac{d}{p}$, this corresponds to the case
\[
    p < 2, \qquad \text{or} \qquad s > \frac{d}{2} \, .
\]

In this case, the geometric sum is bounded by
\[
\sum_{j=0}^{J^*} 2^{-j\beta} \leq \frac{1}{1 - 2^{-\beta}}.
\]
Choosing 
\begin{equation}
\begin{cases} J^* = \left\lceil \frac{1}{d} \log_2(T) \right\rceil, \\ 
|\Lambda^*| = 2^{J^*d}, \\
J = \left\lceil \frac{\frac{S}{d}}{\varepsilon}\log_2(T) \right\rceil, 
\end{cases} \quad \implies \quad \begin{cases}  2^{-J^*(s-\frac{d}{p})}|\Lambda^*|^{-\frac{1}{p}}T = 2^{-sJ^*} T = T^{1 - \frac{s}{d}}, \\
2^{-J^*s'}|\Lambda^*|^{1-\frac{1}{p}}\sqrt{T} = T^{1-\frac{s}{d}}, \\
2^{-J(s-\frac{d}{p})} T \leq T^{1 - \frac{s}{d}},
\end{cases}
\label{eq:parameters}
\end{equation}
with $s - \frac{d}{p} > \varepsilon > 0$ and $S > s$.
Hence, the total regret becomes
\begin{multline}
\label{eq:case_1}
R_T(f) \leq C' G \|f\|_{B_{pq}^s}
\bigg[ \big(C_1 \sqrt{T} + C_2 \big) \bigg(1 + 
  \frac{1}{1 - 2^{-\beta}} + \bigg(\frac{1}{d}\bigg(\frac{S}{\varepsilon}-1\bigg)\log_2(T)\bigg)^{(\frac{1}{p}-\frac{1}{q})_+} \bigg) \\
  + T^{1-\frac{s}{d}}\bigg(1 + \bigg(\frac{1}{d}\bigg(\frac{S}{\varepsilon}-1\bigg)\log_2(T)\bigg)^{(\frac{1}{p}-\frac{1}{q})_+}\bigg)
\bigg].
\end{multline}
Moreover, since $s \geq d/2$, we have $T^{1 - s/d} \leq \sqrt{T}$ and $R_T(f) = O(G\|f\|_{B^s_{pq}}\sqrt{T})$.

\emph{Remark.} The notation $O(\cdot)$ here hides $\log_2(T)$ factors that appear when $p<q$. This originates from the nonlinear oracle construction in the analysis (see Inequality~\eqref{eq:bound_lp}). 
In addition, $\log$ terms may also be absorbed into the constants $C_1,C_2$ coming from the parameter-free subroutine (see Assumption~\ref{assumption:paramfree}). This remark also holds for the remaining cases.

\paragraph{Case 2: $\beta = 0$.}

This critical regime occurs when $p \ge 2$ and $s = \frac{d}{2}$.
The sum becomes:
\[
\sum_{j=0}^{J^*} 2^{-j\beta} = J^* + 1.
\]
Choosing $J^*, |\Lambda^*|$ and $J$ as in \eqref{eq:parameters} 
yields the bound:
\begin{multline}
\label{eq:case_2}
R_T( f) \leq C' G \|f\|_{B_{pq}^s} \bigg[\left(C_1  M^{\frac{1}{2}} \sqrt{T} + C_2 \right)\bigg(2 + \frac{1}{d}\log_2 T + \bigg(\frac{1}{d}\bigg(\frac{S}{\varepsilon}-1\bigg)\log_2(T)\bigg)^{(\frac{1}{p}-\frac{1}{q})_+}\bigg) \\ 
+ \sqrt{T}\bigg(1 + \bigg(\frac{1}{d}\bigg(\frac{S}{\varepsilon}-1\bigg)\log_2(T)\bigg)^{(\frac{1}{p}-\frac{1}{q})_+}\bigg)
\bigg].
\end{multline}
That is $R_T(f) = O(G\|f\|_{B^s_{pq}}\log_2(T)\sqrt{T})$.
\paragraph{Case 3: $\beta < 0$.}

This corresponds to the low regularity case: $\beta = s - \frac{d}{2}$ and
\[
    p \geq 2 \qquad \text{and} \qquad \frac{d}{p} < s < \frac{d}{2}.
\]
Here, the geometric sum is bounded as:
\[
\sum_{j=0}^{J^*} 2^{-j\beta} 
\leq \frac{2^{-J^*\beta}}{2^{-\beta}-1}.
\]
With $J^*, |\Lambda^*|$ and $J$ as in \eqref{eq:parameters}
, the regret bound becomes:

\begin{multline}
\label{eq:case_3}
R_T( f) \le C' G \|f\|_{B_{pq}^s} \bigg[ \left(C_1 \sqrt{T} + C_2 \right) \bigg(1 + \frac{T^{-\frac{\beta}{d}}}{2^{-\beta} - 1} + \bigg(\frac{1}{d}\bigg(\frac{S}{\varepsilon}-1\bigg)\log_2(T)\bigg)^{(\frac{1}{p}-\frac{1}{q})_+}\bigg) \\ 
+ T^{1-\frac{s}{d}}\bigg(1 + \bigg(\frac{1}{d}\bigg(\frac{S}{\varepsilon}-1\bigg)\log_2(T)\bigg)^{(\frac{1}{p}-\frac{1}{q})_+}\bigg) \bigg].
\end{multline}
With $\sqrt{T}T^{-\frac{\beta}{d}} = \sqrt{T} T^{\frac{1}{2}-\frac{s}{d}} = T^{1-\frac{s}{d}}$, one has $R_T(f) = O(G\|f\|_{B^s_{pq}}T^{1-\frac{s}{d}})$.

\newpage
\section{Proof of Corollary~\ref{cor:regret_holder}}
\label{appendix:proof_corollary:regret_holder}

The proof is based on that of Theorem~\ref{theorem:regret_besov}, in Appendix~\ref{appendix:proof_theorem:regret_besov}, case $p=q=\infty$.\\
Let $s >0, f \in \arg \min_{f \in \C^s(\X)} \sumT \ell_t(f(x_t))$ the best function to fit the $T$ data over $\X \times \R$ and $f^* = f_J$ defined as in \eqref{eq:truncated}. One key point is that in the case of Hölder-smooth function ($p=q=\infty$), the nonlinear set $\Lambda^*$ of wavelet coefficients will not be needed to achieve optimal rates.

We start with a decomposition of regret with the oracle $f^* = f_J$ as in the proof of Theorem~\ref{theorem:regret_besov} in Appendix~\ref{appendix:proof_theorem:regret_besov} and we have:

\[
R_T(f) = \underbrace{\sumT \ell_t(\hat f_t(x_t)) - \ell_t(f_J(x_t))}_{=: R_1} \quad + \quad \underbrace{\sumT \ell_t(f_J(x_t)) - \ell_t(f(x_t))}_{=: R_2}
\]

\paragraph{Step 1: Bounding estimation regret $R_1$.} From \eqref{eq:split_estimation_reg}, one has
\begin{multline}
R_1 \leq  \sum_{k\in \bar \Lambda_{j_0}} |\alpha_{j,k}| \left( \textstyle C_1\sqrt{\sumT |g_{j,k,t}|^2} + C_2G \right) + \sum_{j = j_0}^{J} \sum_{k\in \Lambda_j} |\beta_{j,k}| \left( \textstyle C_1\sqrt{\sumT |g_{j,k,t}|^2} + C_2G \right) 
\end{multline}
where $C_1, C_2 > 0$ are relative to Assumption~\ref{assumption:paramfree}, $\alpha_{j,k}$ refers to the scaling coefficients and $\beta_{j,k}$ the detail coefficients.

Since the wavelet basis $\{\phi_{j_0,k},\psi_{j,k}\}$ is assumed to be $S$-regular with $S >  s$ (Definition~\ref{def:regular_wavelet}) and $f \in \C^s(\X)$, by Proposition~\ref{prop:wavelet_coeff}, the detail coefficients at every level $j \geq j_0$ are bounded as:
 \[|\beta_{j,k}| = |\langle f, \psi_{j,k} \rangle| \leq C(\psi,s) |f|_{s} 2^{-j(s + d/2)}\, ,\]
where $C(\psi,s) $ is a positive constant that only depends on the $S$-regular wavelet basis and $|f|_s$ refers to the semi-norm of $f$ defined in \eqref{eq:holder}.\\
For the scaling level $j_0$, then for every $k$, one has: \[|\alpha_{j_0,k}| = |\langle f, \phi_{j_0,k} \rangle | \leq \|f\|_\infty \cdot \|\phi_{j_0,k}\|_1 \leq 2^{-j_0d/2}\|\phi\|_1\|f\|_\infty,\] where we used \[\|\phi_{j_0,k}\|_1 \leq \int_{\R^d} 2^{j_0d/2}\phi(2^{j_0}x) \, dx \leq 2^{-j_0d/2}\|\phi\|_1\] and $\|\phi\|_1 < \infty$ since the scaling function $\phi$ is assumed to be localized (e.g. compactly supported).

Then, plugging the above upper bound, we get:
\begin{multline*}
    R_1 \leq G \|\phi\|_1  \|f\|_\infty \cdot 2^{-j_0d}|\bar \Lambda_{j_0}| \cdot (C_1 \sqrt{T} + C_2) \\ +  C(\psi,s) |f|_{s} \sum_{j=j_0}^{J} 2^{-j(s + d/2)} \left(C_1 \sum_{k\in \Lambda_j} \sqrt{\sumT |g_{j,k,t}|^2} + C_2G |\Lambda_j|\right)\, ,
\end{multline*}
Using the form of the gradients in \eqref{eq:gradient} as long as the bound \eqref{eq:bounding_sum_grad}, we have over each level $j~\in~[j_0,J]$,
\begin{align*}
\sum_{k\in \Lambda_j} \sqrt{\sumT |g_{j,k,t}|^2} 
&\leq \lambda^{\frac{1}{2}} G 2^{dj} \sqrt{\sumT \sum_{k \in \Lambda_j}|\psi(2^j x_t - k)|^2} \\
&\leq (\lambda M_{\psi,2})^{\frac{1}{2}} G 2^{dj} \sqrt{T}
\end{align*}
where $M_{\psi,2} := \|\sum_{k \in \Lambda_j} |\psi(\cdot - k)|^2\|_\infty \leq \|\psi\|_\infty M_{\psi} < \infty$ (see \ref{def:s_regular_wavelet:D2}). Finally,
\begin{multline}
\label{eq:reg_estim}
    R_1 \leq G \|\phi\|_1 \cdot \|f\|_\infty \cdot 2^{-j_0d}|\bar \Lambda_{j_0}| \cdot (C_1 \sqrt{T} + C_2) \\ +  C(\psi,s) G |f|_{s} \big(C_1(\lambda M_{\psi,2})^{\frac{1}{2}}  \sqrt{T}  + C_2\lambda \big)\sum_{j=j_0}^{J} 2^{-j(s - d/2)}
\end{multline}
Setting $j_0 = 0$, the sum can be upper-bounded with 3 different cases as
\[\sum_{j=0}^J 2^{-j(s - d/2)} \leq  \begin{cases} (1 - 2^{-(s-d/2)})^{-1} & \text{if } d < 2s\, , \\
J+1 & \text{if } d = 2s\, , \\
2^{-(J+1)(s - d/2)}(2^{-(s - d/2)} - 1)^{-1} & \text{if } d > 2s\, .\end{cases} \]

\paragraph{Step 2: Bounding the approximation regret.} Following \eqref{eq:reg_approx}, one has:
\begin{equation}
\label{eq:reg_approx_2}
    R_2 := \sumT \ell_t(\hat f^*(x_t)) - \ell_t(f(x_t)) \leq G\sumT |\hat f^*(x_t) - f(x_t)| \leq GT\|K_J f - f\|_\infty \leq C_4 GT |f|_{s} 2^{-s J} \, , 
\end{equation}
where $C_4 = C_4(\psi,s)$ - see \cite[Prop.~4.1.5]{gine2021mathematical} for instance with assumption \ref{def:s_regular_wavelet:D3}.


\paragraph{Step 3: upper-bounding $R_T(f)$.} We need to balance \eqref{eq:reg_estim} and \eqref{eq:reg_approx_2}, and finding the optimal $J \geq 0$. Taking $j_0 = 0$ and $J = \lceil \frac{1}{d}\log_2(T) \rceil$ entails the desired bound in the 3 cases $d < 2s$, $d = 2s$ and $d > 2s$.

\paragraph{Remark.}
In the preceding proof we showed that a single (linear) global resolution level 
$J=\big\lceil \tfrac{1}{d}\log_2 T \big\rceil$ suffices to attain the minimax regret for Hölder-smooth competitors, 
in contrast to general competitors in $B^s_{pq}$, which require a \emph{nonlinear} mechanism (see Appendix~\ref{appendix:proof_theorem:regret_besov}). 
Nevertheless, even in the Hölder-smooth case one may take a larger level 
$J=\big\lceil \tfrac{S}{d\varepsilon}\log_2 T \big\rceil \ge \big\lceil \tfrac{1}{d}\log_2 T \big\rceil$ as in Theorem~\ref{theorem:regret_besov}. 
In the analysis, set the oracle coefficients $\beta_{j,k}=0$ for levels $j>\big\lceil \tfrac{1}{d}\log_2 T \big\rceil$; 
the estimation regret, combined with Assumption~\ref{assumption:paramfree}, reduces to a sum over the remaining nonzero coefficients---namely, those in the linear part---and leads to the same rates.

\newpage 
\section{Proof of Theorem \ref{theorem:local_besov_regret}}
\label{appendix:proof_theorem:local_besov_regret}

The proof uses a first key result that we state and prove right after. 
\begin{theorem}[Local regret over Besov spaces]
\label{theorem:local_besov_regret_all_pruning}
Let $T \geq 1, 1 \leq p,q \leq \infty, s > \frac{d}{p}$ and $f \in B_{pq}^s$. Under the same assumptions of Theorem~\ref{theorem:regret_besov} and Assumption~\ref{assumption:second_order_algo}, Algorithm~\ref{alg:online_adaptive_wavelet} with $\|f\|_\infty \leq B$ has regret
\[\textstyle 
R_T(f) \lesssim G\inf_{\T'} \big\{ \sum_{n \in \L(\T')} B\sqrt{|T_n|} 
+ \|f\|_{s_n} \cdot 2^{-\l(n) s_n} \cdot |T_n|^{r(s_n, p)}\big\},
\]
and if $(\ell_t)$ are exp-concave:
\[\textstyle
R_T(f) \lesssim G\inf_{\T'} \big\{B|\L(\T')| 
+ \sum_{n \in \L(\T')} \|f\|_{s_n} \cdot 2^{-\l(n) s_n} \cdot |T_n|^{r(s_n, p)}\big\},\]
where $\lesssim$ hides logarithmic factors in $T$, and constants independent of $f$ or $T$, $\L(\T')$ denotes the set of leaves in a pruning $\T' \subset \T$, $\|f\|_{s_n}$ are local Besov norms, $\l(n)$ is the level of node $n \in \L(\T')$, and the local rate exponent is given by
\[
r(s_n, p) = 
\begin{cases}
\frac{1}{2} & \text{if } s_n > \frac{d}{2} \text{ or } p < 2, \\
1 - \frac{s_n}{d} & \text{otherwise}.
\end{cases}
\]

\end{theorem}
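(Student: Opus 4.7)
The plan is to fix an arbitrary pruning $\T'$ of $\T$ and, for every leaf $n \in \L(\T')$ with $T_n := \{t \in [T] : x_t \in \X_n\}$, to select the reference expert $e_n^\star = (n,\a_n^\star) \in \mathcal E$ whose grid initialization $\a_n^\star \in \mathcal A$ is closest (within $T^{-1/2}$ precision) to the true scaling coefficients $(\langle f, \phi_{\l(n),k}\rangle)_k$ of $f$ on $\X_n$. The regret then splits leaf by leaf as
\[
R_T(f) = \sum_{n \in \L(\T')} R_{\mathrm{agg},n} + \sum_{n \in \L(\T')} R_{\mathrm{loc},n},
\]
where $R_{\mathrm{agg},n} := \sum_{t \in T_n}\bigl[\ell_t(\hat f_t(x_t)) - \ell_t([\hat f_{e_n^\star,t}(x_t)]_B)\bigr]$ is the aggregation regret against $e_n^\star$ on its active rounds, and $R_{\mathrm{loc},n} := \sum_{t \in T_n}\bigl[\ell_t([\hat f_{e_n^\star,t}(x_t)]_B) - \ell_t(f(x_t))\bigr]$ is the local expert's own regret. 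The overall bound will follow by controlling each piece separately and taking the infimum over $\T'$ at the end.

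For the aggregation terms, I would invoke Assumption~\ref{assumption:second_order_algo}. The sleeping reduction of line~\ref{alg:line_sleeping} makes $e_n^\star$ contribute nothing on rounds $t \notin T_n$, so the second-order inequality is effectively applied on the $|T_n|$ active rounds with clipped losses whose gradients are bounded by $\tilde G \lesssim BG$. This yields $R_{\mathrm{agg},n} \lesssim BG\sqrt{|T_n|\log|\mathcal E|}$ and, after summation, the $\sum_n B\sqrt{|T_n|}$ term of the convex bound. Under exp-concavity, the quadratic variation $\sum_t(\nabla_t^\top \tilde\w_t - \nabla_{e_n^\star,t})^2$ can be absorbed into the primary regret by the standard exp-concave reductions of \citet{koolen2015second,wintenberger2017optimal}, collapsing each per-leaf regret to $R_{\mathrm{agg},n} \lesssim BG\log|\mathcal E|$; summing over the $|\L(\T')|$ leaves yields the $B|\L(\T')|$ term.

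The local part $R_{\mathrm{loc},n}$ would be controlled by replaying the proof of Theorem~\ref{theorem:regret_besov} on the sub-problem attached to $\X_n$: the expert $\hat f_{e_n^\star,t}$ is exactly the output of Algorithm~\ref{alg:training} with starting scale $j_0 = \l(n)$, scaling initialization $\a_n^\star$, detail initialization $0$, and index sets restricted to wavelets intersecting $\X_n$; clipping is harmless since $f(\X_n) \subset [-B,B]$. Introducing a localized nonlinear oracle $f^\star_n = f_{J^\star_n} + f_{\Lambda^\star_n}$ built from the true wavelet coefficients of $f|_{\X_n}$ at scales $j \in [\l(n),J]$, the estimation/approximation decomposition of Appendix~\ref{appendix:proof_theorem:regret_besov} carries over, but the geometric sums over detail levels now start at $\l(n)$ instead of $0$. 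Combining Hölder's inequality with the local Besov characterization~\eqref{eq:def_besov} of $f|_{\X_n} \in B^{s_n}_{pq}(\X_n)$ then produces the prefactor $2^{-\l(n)s_n}$ in front of $\|f\|_{s_n}$, while tuning $J^\star_n$ and $|\Lambda^\star_n|$ according to $|T_n|$ in each of the three regimes of Theorem~\ref{theorem:regret_besov} yields $R_{\mathrm{loc},n} \lesssim G\,\|f\|_{s_n}\,2^{-\l(n)s_n}\,|T_n|^{r(s_n,p)}$. The cost of rounding $\a_n^\star$ onto the $T^{-1/2}$-grid is $O(\sqrt{|T_n|})$ and is absorbed into $R_{\mathrm{agg},n}$.

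The hard part will be this localized replay of the estimation/approximation analysis: one must track carefully how restricting the wavelet expansion to $\X_n$ and shifting the starting scale from $0$ to $\l(n)$ converts the local Besov norm of $f|_{\X_n}$ into the multiplicative factor $2^{-\l(n)s_n}$, verify that the maximal depth $J = \lceil S(d\varepsilon)^{-1}\log_2 T\rceil$ used globally by Algorithm~\ref{alg:online_adaptive_wavelet} remains an admissible cut-off for the local tuning, and ensure that the nonlinear selection $\Lambda^\star_n$ can be performed within the sub-domain without altering the three-case geometric balancing. A secondary subtlety is that the sleeping-expert bound must hold uniformly over every choice of comparator $e_n^\star$ across every pruning, which is precisely what enables the final infimum over $\T'$.
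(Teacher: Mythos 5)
Your proposal is correct and follows essentially the same route as the paper's proof: the same oracle-pruning decomposition into an aggregation regret (handled via the sleeping-expert reduction and Assumption~\ref{assumption:second_order_algo}, with the exp-concave absorption of the quadratic variation) and a local regret (handled by replaying the estimation/approximation analysis of Theorem~\ref{theorem:regret_besov} on each $\X_n$ with starting scale $\l(n)$ and a grid of precision $\propto T^{-1/2}$ for the scaling coefficients). The only cosmetic difference is where the grid-rounding cost is booked — the paper places it in the local term, you place it in the aggregation term — which does not affect the final bound.
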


\paragraph{Remark.} Theorem~\ref{theorem:local_besov_regret_all_pruning} holds for any pruning $\T'$ of $\T$. In particular, our procedure effectively competes against the best pruning with respect to the profile of the competitor $f$. Intuitively, Algorithm~\ref{alg:online_adaptive_wavelet} achieves a spatial trade-off over the input space: it can refine locally by going deeper with high $\l(n)$ at the cost of increasing the number of leaves $|\L(\T')|$, while remaining coarser and less accurate in other regions, with fewer leaves to compete against.
In particular, when applying the result to a specific pruning, we show in Theorem~\ref{theorem:local_besov_regret_appendix} that Algorithm~\ref{alg:online_adaptive_wavelet} achieves minimax-optimal (local) regret when facing exp-concave losses.

\begin{proof}[\bfseries Proof of Theorem~\ref{theorem:local_besov_regret_all_pruning}]
Let $1\leq p,q\leq \infty, s > \frac{d}{p}, f \in B_{pq}^s$ such that $B := \|f\|_\infty < \infty$ - this is possible since $f$ is continuous over $\X$ with the condition $s > d/p$ and embedding of $B_{pq}^s$ in $L^\infty$. 

\paragraph{Grid $\mathcal A$ for scaling coefficients.} Observe that \[|\alpha_{j_0,k}| = |\langle f, \phi_{j_0,k} \rangle | \leq \|f\|_\infty \cdot \|\phi_{j_0,k}\|_1 \leq 2^{-j_0d/2}\|\phi\|_1B,\] and we define $\varepsilon > 0$ the precision of the regular grid $\mathcal A$ to learn each scaling coefficients $(\alpha_{j_0,k})$. We let $A = |\mathcal A| = \lceil 2^{1-j_0d/2}B\|\phi\|_1/\varepsilon \rceil$ the number of points on the grid.

\paragraph{Definition of the oracle associated to a pruning.}
Let $\T'$ be some pruning of $\T$ and $\Pcal(\T') = (\X_n)_{n \in \L(\T')}$ be the associated partition of $\X$. We define the prediction function of pruning $\T'$, at any time $t \geq 1$  \[\hat f_{\T',t}(x) = \sum_{n \in \L(\T')} [\hat f_{n,\a_n,t}(x)]_B\,, \qquad x \in \X,\] 
where each $\hat f_{n,\a_n,t}$ is a sequential predictor of type \eqref{eq:predictor}, with starting scale $j_0=\l(n)$, restricted over $\X_n$ and starting at oracle scaling coefficients $\a_n = \argmin_{\a} \|\a - \alphabold_{j_0,n}\|_\infty$ the best approximating vector of the (subset of) scaling coefficients $\alphabold_{j_0,n} \in \bar \Lambda_{j_0,n}$ whose basis functions is supported on $\X_n$. In particular, the number of coefficients in $\bar \Lambda_{j_0}$ that intersects $\X_n$ is at most $|\bar \Lambda_{j_0,n}| \leq |\bar \Lambda_{j_0}|2^{-\l(n)d} \leq \lambda$ since $\l(n)=j_0$.

\paragraph{Decomposition of regret.} We have a decomposition of regret as:
\begin{equation}
\Reg_T(f) = \underbrace{\sumT \ell_t(\hat f_t(x_t)) - \ell_t(\hat f_{\T',t}(x_t))}_{=: R_1} + \underbrace{\sumT \ell_t(\hat f_{\T',t}(x_t)) - \ell_t(f(x_t))}_{=: R_2},
\label{eq:decomposition_reg}
\end{equation}
$R_1$ is the regret related to the estimation error of the expert-aggregation algorithm compared to some oracle partition $\Pcal(\T')$ associated to $\T'$, i.e. the error the algorithm commits while aiming the oracle partition $\Pcal(\T')$.
On the other hand, $R_2$ is related to the error of the model predicting over subregions in $\Pcal(\T')$, against some function $f \in B_{pq}^s$ and corresponds to the (localised) regret discussed in Theorem~\ref{theorem:regret_besov}.

\paragraph{Step 1: Upper-bounding $R_2$ as local regrets.} Recall that $\Pcal(\T')$ form a partition of $\X$. Hence, for any $x_t \in \X$, the prediction at time $t$ is $\hat f_{\T',t}(x_t) = [\hat f_{j_0,n,\a_n,t}(x_t)]_B$ with $n \in \N(\T')$ the unique node such that $x_t \in \X_n$ at time $t$. Then, $R_2$ can be written as follows:
\begin{align}
        R_2 &= \sumT \sum_{n\in \L(\T')} (\ell_t(\hat f_{\T',t}(x_t)) - \ell_t(f(x_t)))\ind{x_t \in \X_n} \nonumber \\
        & = \sum_{n \in \L(\T')} \sum_{t\in T_n}  \ell_t([\hat f_{n,\a_n,t}(x_t)]_B) - \ell_t(f(x_t)) \nonumber \\
        & \leq \sum_{n \in \L(\T')} \sum_{t\in T_n}  \ell_t(\hat f_{n,\a_n,t}(x_t)) - \ell_t(f(x_t)) 
        , \label{eq:node_approximation}
\end{align}
    where we set $T_n = \{1 \le t \le T : x_t \in \X_n\}, \X_n \subset \X, n \in \L(\T')$ and \eqref{eq:node_approximation} is because $[\hat f_{n,\a_n,t}]_B \leq \hat f_{n,\a_n,t}$ and $\ell_t$ is convex and has minimum in $[-B,B]$ with $B= \|f\|_\infty$.

The decomposition in \eqref{eq:node_approximation} represents a sum of \emph{local} error approximations of the function $f$ over the partition $\Pcal(\T')$, using predictors $\hat {f}_{n,\a_n}, n \in \L(\T')$. Recall that for every $n \in \L(\T'),  \hat{f}_{n,\a_n}$ is a prediction function associated to a wavelet decomposition \eqref{eq:predictor}, where the scaling coefficients start at $\a_n$ over $\X_n$ and with $j_0 = \l(n)$. 
In proof of Theorem \ref{theorem:regret_besov} (Appendix \ref{appendix:proof_theorem:regret_besov}) we showed that any wavelet decomposition adapts to any regularity via $\|f\|_{B_{pq}^s}, s$ of $f$. Thus, the approximation error of $\hat {f}_{j_0,n,\a_n}$ with respect to $f$ remains similar to that in \eqref{eq:reg_approx}, but now with regard to a Besov function with local smoothness $s_n$ and norm $\|f\|_{s_n} := \|f\|_{B_{pq}^{s_n}(\X_n)}$ over $\X_n$ - see \eqref{eq:local_smoothness}. Specifically, from \eqref{eq:node_approximation}, \eqref{eq:intermediate_R1}, \eqref{eq:reg_approx}, we get (without applying Hölder's inequality on the scaling coefficients): \begin{multline} R_2 \leq  \sum_{n \in \L(\T')} \Bigg[G \sum_{k \in \bar \Lambda_{j_0,n}} |\alpha_{j_0,k} - a_{n,k}|(C_1\sqrt{|T_n|} + C_2) \\
    +  \underbrace{\lambda \sum_{j=j_0}^{j_0+J_n} \|\betabold_j\|_p 2^{dj (\frac{1}{2} - \frac{1}{p})_+}  \big(C_1\sqrt{\textstyle \sum_{k \in \Lambda_{j,n}} \sumT |g_{j,k,t}|^2} +  C_2 2^{dj(1 - \frac{1}{p})} \big)
    }_{\text{estimation error on wavelet coefficients as in \eqref{eq:intermediate_R1}}} \\ + \underbrace{C_3 G \|f\|_{s_n} 2^{-s_n (j_0 + J_n)} |T_n|}_{\text{approximation error \eqref{eq:reg_approx} over $\X_n$ at scale $j_0 + J_n$}}\Bigg] , \label{eq:pre_opt} 
\end{multline} 
with $C_1,C_2$ as in Assumption \ref{assumption:paramfree} and $C_3=C(\psi,s)$ as in \eqref{eq:reg_approx} and $j_0 = \l(n)$ for each $n \in \L(\T')$. In particular, by definition of $\a_n = \argmin_{\a \in \mathcal A}\| \alphabold_{j_0,n} - \a\|_\infty$, and given that $\mathcal A$ is a grid with precision $\varepsilon > 0$, one has \[|\alpha_{j_0,k} - a_{n,k}| \leq \varepsilon/2 \quad \text{for every } k \in \bar \Lambda_{j_0,n}.\]
From \eqref{eq:pre_opt}, one can bound the absolute values of the scaling terms with $|\bar \Lambda_{j_0,n}|\leq \lambda$. Then, one can factorize the sum in $j$ and the approximation term by $2^{-\l(n)s_n}$ where $j_0 = \l(n)$ over each $n \in \L(\T')$. Finally, applying Hölder's inequality over the sum in $j$ (see \eqref{eq:final_R1_linear}) and following the same optimization steps as in Proof of Theorem~\ref{theorem:regret_besov} entails:  
\begin{multline*} R_2 \leq \lambda G\sum_{n \in \L(\T')} \frac{\varepsilon}{2}(C_1\sqrt{|T_n|} + C_2) \\ \quad + \lambda G \sum_{n \in \L(\T')} C( C_1,C_2,s_n,\psi,p) \|f\|_{s_n} 2^{-\l(n)s_n}
\begin{cases} 
\sqrt{|T_n|} & \text{if } s_n > \frac{d}{2} \text{ or } p < 2\\[0.5ex]
|T_n|^{1 - \frac{s_n}{d}} & \text{else}, 
\end{cases}
\end{multline*}
where $C(C_1,C_2,s_n,\psi,p)$ is a constant that can be deduced from similar calculation as in \eqref{eq:case_1}, \eqref{eq:case_2} and \eqref{eq:case_3}.

In particular, by Cauchy-Schwarz's inequality and distributing the sum one has \[\sum_{n \in \L(\T')} (C_1\sqrt{|T_n|} + C_2) \leq  C_1\sqrt{|\L(\T')|T} + C_2|\L(\T')|\]
All in one, one has
 \begin{multline}
     R_2 \leq \lambda \frac{\varepsilon}{2} G  \left(C_1\sqrt{|\L(\T')|T} + C_2 |\L(\T')|\right) \\
     \qquad + \lambda G \sum_{n \in \L(\T')} C(C_1,C_2,s_n,\psi,p) \|f\|_{s_n} 2^{-\l(n) s_n}
\begin{cases} 
\sqrt{|T_n|} & \text{if } s_n > \frac{d}{2} \text{ or } p < 2, \\[0.5ex]
|T_n|^{1 - \frac{s_n}{d} } & \text{else}. 
\end{cases}
     \label{eq:regret_approx_pruning}
\end{multline}

\paragraph{Step 2: Upper-bounding the estimation error $R_1$.}
$R_1$ is due to the error incurred by sequentially learning the prediction rule $\hat f_{\T'}$ associated with an oracle pruning $\T'$ of $\T$, along with the best scaling coefficients $(\a_n)_{n \in \L(\T')}$ selected from the grid $\mathcal{A}$. 

Note that at each time $t$, only a subset of nodes in $\T$ are active and output predictions. Specifically, for any time $t \geq 1$, we define in Algorithm~\ref{alg:online_adaptive_wavelet} the set of active experts at round $t$ as
\[
\mathcal{E}_t = \{(n, \a_n) : x_t \in \X_n\}.
\]

Moreover, we assume bounded gradients: for any time $t \geq 1$ and expert $e \in \mathcal{E}$,
\[
|\nabla_{t,e}| = \big| \ell_t'(\hat f_t(x_t)) \cdot [\hat f_{e,t}(x_t)]_B \big| \leq GB,
\]
which satisfies Assumption~\ref{assumption:second_order_algo} with $\tilde G = BG$.

Using standard sleeping reduction, one can prove that, for any expert $(n,\a_n), n\in \L(\T'), t \geq 1$ - see Proof of Theorem~2 in \cite{liautaud2025minimax} Eq.~(31)-(35):
\begin{align}
        (\ell_t(\hat f_t(x_t)) - \ell_t(\hat f_{n,\a_n,t}(x_t)))\ind{x_t \in \X_n} 
        &\le  \ell_t'(\hat f_t(x_t))
         (\hat f_t(x_t) - \hat f_{n,\a_n,t}(x_t))\ind{x_t \in \X_n} & \leftarrow \text{by convexity of } \ell_t \notag \\
        &= (\nabla_t^\top \tilde\w_t - \nabla_{(n,\a_n),t})\ind{x_t \in \X_n} \\
        &= \nabla_t^\top \w_t - \nabla_{(n,\a_n),t}.
        \label{eq:convex}
\end{align}
Then, with $T_n = \{1 \leq t \leq T : x_t \in \X_n\}, n \in \L(\T')$:
\begin{align}
    R_1 &= \sumT \sum_{n \in \L(\T')} (\ell_t(\hat f_t(x_t)) - \ell_t(\hat f_{n,\a_n,t}(x_t))\ind{x_t \in \X_n} & \leftarrow \{\X_n, n \in \L(\T')\} \text{ partition of } \X \notag \\
    &\leq \sum_{n \in \L(\T')} \sumT (\nabla_t ^\top \w_t - \nabla_{(n,\a_n),t}) & \leftarrow \text{by } \eqref{eq:convex} \notag \\
    &\leq \sum_{n \in \L(\T')}\Big(C_3 \textstyle \sqrt{\log\big(|\mathcal E|\big)} \sqrt{\sumT \big(\nabla_t^\top \w_t - \nabla_{(n,\a_n),t})^2} + C_4 \tilde G \Big)& \leftarrow \text{by Assumption } \ref{assumption:second_order_algo} \notag \\
    &= C_4BG|\L(\T')| + C_3 \sqrt{\log\big(|\mathcal E|\big)} \sum_{n \in \L(\T')} \sqrt{\sum_{t \in T_n} \big(\nabla_t^\top \w_t - \nabla_{(n,\a_n),t})^2}, & \label{eq:bound_R1}
\end{align}
where the last equality holds because for any $n \in \L(\T'), \nabla_t^\top \w_t - \nabla _{(n,\a_n),t} = 0$ if $x_t \not \in \X_n$ and $\tilde G = BG$.

The proof goes on with two different cases depending on the losses' convex properties:

\begin{itemize}[leftmargin=*]
\item \emph{Case 1: \((\ell_t)_{1 \leq t \leq T}\) convex.} 

Observe that at any time $t \in [T], \|\nabla_t\|_\infty \leq \tilde G$ and $\|\w_t\|_1 = 1$, which gives $|\nabla_t^\top \w_t| \leq \tilde G = BG$. Then, from \eqref{eq:bound_R1}
\begin{align}
    R_1 &\leq C_4BG|\L(\T')| + 2C_3 \sqrt{\log\big(|\mathcal E|\big)} B G\sum_{n \in \L(\T')} \sqrt{|T_n|} 
    \label{eq:regret_pruning_convex_loss}
\end{align}

In case of convex losses, we finally have by \eqref{eq:decomposition_reg}, \eqref{eq:regret_approx_pruning} and \eqref{eq:regret_pruning_convex_loss}:
\begin{multline*} \Reg_T(f) \leq 2C_3B G\sqrt{\log\big(|\mathcal E|\big)}\sum_{n \in \L(\T')}\sqrt{|T_n|} + \left(\lambda C_2 \frac{\varepsilon}{2} +C_4B\right) G|\L(\T')| + \lambda \frac{\varepsilon}{2} G  C_1\sqrt{|\L(\T')|T}  \\
     \qquad + \lambda G \sum_{n \in \L(\T')} C(C_1,C_2,s_n,\psi,p) \|f\|_{s_n} 2^{-\l(n) s_n}
\begin{cases} 
\sqrt{|T_n|} & \text{if } s \geq \frac{d}{2} \text{ or } p < 2, \\[0.5ex]
|T_n|^{1 - \frac{s}{d}} & \text{else}.
\end{cases}
\end{multline*}

Taking $\varepsilon = BT^{-\frac{1}{2}}$ entails
\begin{multline*} \Reg_T(f) \leq 2C_3BG\sqrt{ \log\big(|\mathcal E|\big)}\sum_{n \in \L(\T')}\sqrt{|T_n|} + (\lambda C_12^{-1} + \lambda C_2 2^{-1} T^{-\frac{1}{2}} + C_4) B G|\L(\T')| \\ + \lambda G \sum_{n \in \L(\T')} C(C_1,C_2,s_n,\psi,p) \|f\|_{s_n} 2^{-\l(n) s_n}
\begin{cases} 
\sqrt{|T_n|} & \text{if } s_n \geq \frac{d}{2} \text{ or } p < 2, \\[0.5ex]
|T_n|^{1 - \frac{s_n}{d}} & \text{else}. 
\end{cases}
\end{multline*}
where $|\mathcal E| \leq  |\N(\T)||\mathcal A|^\lambda \leq |\N(\T)|(2\|\phi\|_1T)^\lambda$ because $|\mathcal A| = \lceil 2B\|\phi\|_1/\varepsilon \rceil = \lceil 2\|\phi\|_1T^{\frac{1}{2}} \rceil$.

Since this inequality holds for all pruning $\T'$ of our main tree $\T$, one can take the infimum over all pruning to get the desired upper-bound.


\item \emph{Case 2: $(\ell_t)_{1 \leq t \leq T}$ $\eta$-exp-concave.} 

If the sequence of loss functions $(\ell_t)$ is $\eta$-exp-concave for some $\eta >0$, then thanks to \cite[Lemma~4.3]{hazan2016introduction} we have for any $0 < \mu \leq \frac{1}{2}\min \{\frac{1}{\tilde G}, \eta\}$ and all $t \geq 1, n \in \L(\T')$, using \eqref{eq:convex}:
\begin{equation}
(\ell_t(\hat f_t(x_t)) - \ell_t(\hat f_{n,\a_n,t}(x_t)))\ind{x_t \in \X_n} \leq  \nabla_t^\top \w_t - \nabla_{(n,\a_n),t} - \frac{\mu}{2} \big(\nabla_t^\top \w_t - \nabla_{(n,\a_n),t}\big)^2 \label{eq:exp_concave}
\end{equation}
Summing \eqref{eq:exp_concave} over $t \in [T]$ and $n \in \L(\T')$, we get:
\begin{align}
    R_1 &\leq  \sum_{n \in \L(\T')} \sum_{t \in T_n} \nabla_t ^\top \w_t - \nabla_{(n,\a_n),t} - \frac{\mu}{2} \sum_{n \in \L(\T')} \sum_{t \in T_n} \big(\nabla_t^\top  \w_t - \nabla_{(n,\a_n),t}\big)^2 \notag \\
    &\leq C_4\tilde G |\L(\T')| + \tilde C_3 \sum_{n \in \L(\T')} \sqrt{\sum_{t \in T_n} \big(\nabla_t^\top \w_t - \nabla_{(n,\a_n),t})^2} - \frac{\mu}{2} \sum_{n \in \L(\T')} \sum_{t \in T_n} \big(\nabla_t^\top  \w_t - \nabla_{(n,\a_n),t}\big)^2 & \leftarrow \text{by } \eqref{eq:bound_R1}\,, \label{eq:before_young}
\end{align}
where we set $\tilde C_3 = C_3 \sqrt{\log\big(|\mathcal E|\big)}$ and $\tilde G = BG$. 
Young's inequality gives, for any \(\nu >0\), the following upper-bound:
\begin{equation}
    \sqrt{\sum_{t\in T_n} \big(\nabla_t^\top \w_t - \nabla_{(n,\a_n),t})^2} \leq \frac{1}{2\nu} + \frac{\nu}{2} \sum_{t \in T_n} \big(\nabla_t^\top \w_t - \nabla_{(n,\a_n),t})^2 \,.
    \label{eq:young}
\end{equation}
Finally, plugging \eqref{eq:young} with \(\nu = \mu/\tilde C_3 > 0\) in \eqref{eq:before_young}, we get
\begin{align}
    R_1 &\leq C_4 BG |\L(\T')| + \tilde C_3 \sum_{n \in \L(\T')} \left(\frac{\tilde C_3}{2\mu} + \frac{\mu}{2\tilde C_3}\sum_{t \in T_n} \big(\nabla_t^\top \w_t - \nabla_{(n,\a_n),t})^2\right) - \frac{\mu}{2} \sum_{n \in \L(\T')} \sum_{t \in T_n} \big(\nabla_t^\top  \w_t - \nabla_{(n,\a_n),t}\big)^2 \notag \\
    &= \left(\frac{C_3^2 \log\big(|\mathcal E|\big)}{2\mu} + C_4 B G\right)|\L(\T')|,
    \label{eq:regret_pruning_exp_concave_loss}
\end{align}
again with $|\mathcal E| \leq |\N(\T)|(2\|\phi\|_1T)^\lambda$. Then, one can deduce the final bound from equations~\eqref{eq:decomposition_reg}, \eqref{eq:regret_approx_pruning} and \eqref{eq:regret_pruning_exp_concave_loss} and taking the infimum over the prunings $\T'$.
\end{itemize}

\paragraph{Worst case regret bound.} Note that since we assume that $\|f\|_\infty \leq B$, and that all local predictors $\hat f_{e}, e \in \mathcal E$ in Algorithm \ref{alg:online_adaptive_wavelet} are clipped in $[-B,B]$, we first have for any $x \in \X$, \[ |\hat f_t(x)| = \sum_{e \in \mathcal E_t} w_{e,t}|[\hat f_{e,t}(x)]_B| \leq B \sum_{e\in \mathcal E_t} w_{e,t} = B.\]
 
Thus,
 \begin{align}
     \Reg_T(f) &= \sumT \ell_t(\hat f_t(x_t)) - \ell_t(f(x_t)) & \notag\\
     &\leq \sumT G |\hat f_{t}(x_t) - f(x_t)| & \leftarrow \text{$\ell_t$ is $G$-Lipschitz} \notag \\
     &\leq G \sumT |\hat f_{t}(x_t)| + |f(x_t)| &\notag \\
     &= 2BGT & \label{eq:worst_case}
 \end{align}
 \end{proof}

We now restate a complete version of Theorem~\ref{theorem:local_besov_regret} from the main text and provide its proof below. 


\begin{theorem}
\label{theorem:local_besov_regret_appendix} 
Let $T \geq 1, 1 \leq p,q \leq \infty, s > \frac{d}{p}$. Let $f \in B_{pq}^s$ and $B \geq \|f\|_\infty$. Let $\T'$ be any pruning of $\T$, together with a collection of local smoothness indices $(s_n)_{n \in \L(\T')}$ defined as in \eqref{eq:local_smoothness} and local norms $\|f\|_{s_n}$. Then, under the same assumptions of Theorem~\ref{theorem:regret_besov} and Assumption~\ref{assumption:second_order_algo}, Algorithm~\ref{alg:online_adaptive_wavelet} satisfies
\[\textstyle
R_T(f) \lesssim G \sum_{n \in \L(\T')} \bigg( B^{1-\frac{d}{2s_n}}(2^{-\l(n)s_n}\|f\|_{s_n})^{\frac{d}{2s_n}}\sqrt{|T_n|}\ind{s_n \geq \frac{d}{2}} + \big(2^{-\l(n)s_n}\|f\|_{s_n} |T_n|^{1-\frac{s_n}{d}}\big)\ind{s_n < \frac{d}{2}} + B \sqrt{|T_n|}\bigg)
\]
and moreover we also have, if $(\ell_t)$ are exp-concave:
\[\textstyle
R_T(f) \lesssim G\sum_{n\in \L(\T')} \bigg(B^{1 - \frac{2d}{2s_n+d}}\big(2^{-\l(n)s_n}\|f\|_{s_n}\big)^{\frac{2d}{2s_n + d}}|T_n|^{\frac{d}{2s_n+d}}\ind{s_n \geq \frac{d}{2}} + 2^{-\l(n)s_n}\|f\|_{s_n} |T_n|^{1-\frac{s_n}{d}}\ind{s_n < \frac{d}{2}} + B\bigg),
\]
where $\lesssim$ hides logarithmic factors in $T$, and constants independent of $f$ or $T$.
\end{theorem}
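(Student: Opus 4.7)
The plan is to derive Theorem~\ref{theorem:local_besov_regret_appendix} from the stronger Theorem~\ref{theorem:local_besov_regret_all_pruning}, which already bounds the regret by an infimum over \emph{all} prunings of $\T$. Given the fixed target pruning $\T'$ with local smoothness indices $(s_n)_{n \in \L(\T')}$, I would specialize that infimum to a refinement $\T''$ of $\T'$: for each leaf $n \in \L(\T')$ with $s_n \geq \frac{d}{2}$, subdivide $\X_n$ into $2^{k_n d}$ dyadic subcubes at depth $\l(n)+k_n$, where $k_n \geq 0$ is a free parameter to be optimized; for leaves with $s_n < \frac{d}{2}$, leave the partition unrefined ($k_n=0$), since the rate $|T_n|^{1-s_n/d}$ already appears in Theorem~\ref{theorem:local_besov_regret_all_pruning} and matches the target bound in both the convex and exp-concave cases.

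Two monotonicity facts let me aggregate the sub-leaf contributions cleanly. First, local smoothness can only increase under restriction, so $s_{n_i} \geq s_n$ for every sub-leaf $\X_{n_i} \subset \X_n$; in particular the rate exponent stays at $r(s_{n_i},p)=\tfrac{1}{2}$. Second, the wavelet characterization \eqref{eq:def_besov} gives $\|f\|_{s_{n_i}} \lesssim \|f\|_{s_n}$, since the wavelet coefficients active on $\X_{n_i}$ form a subset (up to boundary terms) of those active on $\X_n$. Combining these with Cauchy--Schwarz over the $2^{k_n d}$ sub-leaves produces per-parent contributions of the form $B \cdot 2^{k_n d/2}\sqrt{|T_n|} + \|f\|_{s_n}\, 2^{-(\l(n)+k_n)s_n}\cdot 2^{k_n d/2}\sqrt{|T_n|}$ in the convex case, and $B \cdot 2^{k_n d} + \|f\|_{s_n}\, 2^{-(\l(n)+k_n)s_n}\cdot 2^{k_n d/2}\sqrt{|T_n|}$ in the exp-concave case.

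The last step is an elementary balance in $k_n$. In the convex regime the two terms share the factor $2^{k_n d/2}\sqrt{|T_n|}$, so balancing $B \approx \|f\|_{s_n}2^{-(\l(n)+k_n)s_n}$ yields $k_n \approx s_n^{-1}\log_2\bigl(\|f\|_{s_n}2^{-\l(n)s_n}/B\bigr)$, producing the geometric mean $B^{1-d/(2s_n)}\bigl(2^{-\l(n)s_n}\|f\|_{s_n}\bigr)^{d/(2s_n)}\sqrt{|T_n|}$. In the exp-concave regime, balancing $B \cdot 2^{k_n d}$ against $\|f\|_{s_n}2^{-\l(n)s_n}\cdot 2^{k_n(d/2 - s_n)}\sqrt{|T_n|}$ gives $k_n \approx \tfrac{2}{2s_n+d}\log_2\bigl(\|f\|_{s_n}2^{-\l(n)s_n}\sqrt{|T_n|}/B\bigr)$ and yields the rate $B^{1-2d/(2s_n+d)}\bigl(2^{-\l(n)s_n}\|f\|_{s_n}\bigr)^{2d/(2s_n+d)}|T_n|^{d/(2s_n+d)}$. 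When the optimal $k_n$ would be negative (i.e.\ $\|f\|_{s_n}2^{-\l(n)s_n}\leq B$) I simply set $k_n=0$; the residual $B\sqrt{|T_n|}$ (resp.\ $B$) term already present in Theorem~\ref{theorem:local_besov_regret_all_pruning} absorbs this boundary case.

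The main obstacle is the nested-domain monotonicity $\|f\|_{s_{n_i}} \lesssim \|f\|_{s_n}$: while intuitive, the wavelet characterization only delivers it up to constants depending on the basis and on the boundary overlap of wavelets across subdomains, so it requires careful bookkeeping of which coefficients of $f$ contribute to each sub-norm. A secondary issue is that the optimal $k_n$ is real-valued and must be rounded to a nonnegative integer bounded by $J_0 - \l(n)$; both rounding and truncation contribute only additive logarithmic factors in $T$, which are already hidden by the $\lesssim$ in the theorem statement.
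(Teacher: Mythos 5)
Your proposal matches the paper's own proof essentially step for step: the paper likewise extends each leaf $n\in\L(\T')$ of the given pruning by a subtree of depth $h_n$ (your $k_n$), applies Theorem~\ref{theorem:local_besov_regret_all_pruning} to the extended pruning, aggregates the sub-leaf terms via Jensen/H\"older exactly as your Cauchy--Schwarz step does, and optimizes $h_n$ with the same balancing equations and the same $\max\{0,\cdot\}$ truncation, including setting $h_n=0$ when $s_n<\frac{d}{2}$. The delicate monotonicity of local norms and smoothness under domain restriction that you flag is handled in the paper by the same (equally terse) appeal to the wavelet characterization and standard embeddings, so there is no substantive difference in approach.
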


\begin{proof}[Proof of Theorem \ref{theorem:local_besov_regret_appendix}]

Let $\T' \subset \T$ be some pruning of $\T$. We define $\T'_{\text{ext}}$ the extension of $\T'$ such that all terminal nodes $n \in \L(\T')$ is extended with a tree $\T'_n$ of depth $h_n \in \mathbb N$.
In particular, for any $n' \in \L(\T'_{\text{ext}}), \l(n')=\l(n)+h_{n}$ with $n \in \L(\T')$. See Figure~\ref{fig:example_pruning} for an illustrative example.

\begin{figure}[ht]
\centering
\begin{tikzpicture}[
  scale=1,
  level distance=0.6cm,
  level 1/.style={sibling distance=4.cm},
  level 2/.style={sibling distance=2.25cm},
  level 3/.style={sibling distance=1.1cm},
  level 4/.style={sibling distance=0.55cm},
  level 5/.style={sibling distance=0.3cm},
  every node/.style = {circle, draw=none, fill=black, minimum size=5pt, inner sep=0pt},
  edge from parent path={(\tikzparentnode) -- (\tikzchildnode)}
]

\node[fill=black] (root) {}
  child { node[fill=black] {}
    child { node[fill=black] {}
      child { node[fill=black] {}
        child { node[fill=black] (h1) {} 
          child { node[fill=orange] (leafO3) {} } 
          child { node[fill=orange] (leafO4) {} } 
        }
        child { node[fill=black] (h2) {} 
        }
      }
      child { node[fill=black] (rootleafA) {}
        child { node[fill=blue] (pleafA) {} 
          child { node[fill=blue] (leafA) {} } 
          child { node[fill=blue] {} } 
        }
        child { node[fill=blue] (pleafB) {} 
          child { node[fill=blue] {} } 
          child { node[fill=blue] (leafB) {} } 
        }
      }
    }
    child { node[fill=black] {}
      child { node[fill=black] (h4) {}
        child { node[fill=orange] (leafO1) {} 
        }
        child { node[fill=orange] (leafO2) {} 
        }
      }
      child { node[fill=black] (h5) {}
      }
    }
  }
  child { node[fill=black] {}
    child { node[fill=black] (rootleafC) {}
      child { node[fill=blue] (pleafC) {}
        child { node[fill=blue] (leafC) {} 
        }
        child { node[fill=blue] {} 
        }
      }
      child { node[fill=blue] (pleafD) {}
         child { node[fill=blue] {} 
        }
        child { node[fill=blue] (leafD) {} 
        }
      }
    }
    child { node[fill=black] (rleafG) {}
      child { node[fill=green!50!black] (ppleafG1) {}
        child { node[fill=green!50!black] (pleafG1) {} 
          child { node[fill=green!50!black] (leafG1) {} } 
          child { node[fill=green!50!black] {}
          } 
        }
        child { node[fill=green!50!black] {} 
          child  { node[fill=green!50!black] {} } 
          child { node[fill=green!50!black] {} } 
        }
      }
      child { node[fill=green!50!black] (ppleafG2) {}
        child { node[fill=green!50!black] {} 
          child { node[fill=green!50!black] {} } 
          child { node[fill=green!50!black] {} 
          } 
        }
        child { node[fill=green!50!black] (pleafG2) {} 
          child { node[fill=green!50!black] {} } 
          child { node[fill=green!50!black] (leafG2) {} } 
        }
      }
    }
  };


\draw[dotted, thick, blue, rounded corners]
  ([xshift=-0.1cm, yshift=-0.1cm]leafA.south west) --
  ([xshift=0.1cm, yshift=-0.1cm]leafB.south east) -- 
  ([xshift=0.1cm, yshift=-0.1cm]pleafB.east)--
  ([xshift=0.05cm, yshift=0.1cm]rootleafA.north east)--
  ([xshift=-0.05cm, yshift=0.1cm]rootleafA.north west)--
  ([xshift=-0.1cm, yshift=-0.1cm]pleafA.west)--
  cycle;

\draw[dotted, thick, blue, rounded corners]
  ([xshift=-0.15cm, yshift=-0.1cm]leafC.south west) --
  ([xshift=0.15cm, yshift=-0.1cm]leafD.south east) -- 
  ([xshift=0.15cm, yshift=-0.1cm]pleafD.east)--
  ([xshift=0.05cm, yshift=0.1cm]rootleafC.north east)--
  ([xshift=-0.05cm, yshift=0.1cm]rootleafC.north west)--
  ([xshift=-0.15cm, yshift=-0.1cm]pleafC.west)--
  cycle;
\draw[dotted, thick, orange, rounded corners]
  ([xshift=0.05cm, yshift=0.1cm]h4.north east) --
  ([xshift=-0.05cm, yshift=0.1cm]h4.north west) -- 
  ([xshift=-0.15cm, yshift=-0.1cm]leafO1.south west) --
  ([xshift=0.15cm, yshift=-0.1cm]leafO2.south east) -- 
  cycle;

\draw[dotted, thick, orange, rounded corners]
  ([xshift=0.05cm, yshift=0.1cm]h1.north east) --
  ([xshift=-0.05cm, yshift=0.1cm]h1.north west) -- 
  ([xshift=-0.15cm, yshift=-0.1cm]leafO3.south west) --
  ([xshift=0.15cm, yshift=-0.1cm]leafO4.south east) -- 
  cycle;
\draw[dotted, thick, green!50!black, rounded corners]
  ([xshift=-0.1cm, yshift=-0.1cm]leafG1.south west) --
  ([xshift=0.15cm, yshift=-0.1cm]leafG2.south east) -- 
  ([xshift=0.15cm, yshift=-0.1cm]pleafG2.east)--
  ([xshift=0.15cm, yshift=-0.1cm]ppleafG2.east)--
  ([xshift=0.cm, yshift=0.15cm]rleafG.north)--
  ([xshift=-0.15cm, yshift=-0.1cm]ppleafG1.west)--
  ([xshift=-0.15cm, yshift=-0.1cm]pleafG1.west)--
  cycle;
\draw[dotted, thick, black, rounded corners] 
([xshift=-0.1cm, yshift=0.1cm]h2.north west) 
rectangle 
([xshift=0.1cm, yshift=-0.1cm]h2.south east);
\draw[dotted, thick, black, rounded corners] 
([xshift=-0.1cm, yshift=0.1cm]h5.north west) 
rectangle 
([xshift=0.1cm, yshift=-0.1cm]h5.south east);
\node[black,draw=none,fill=none] at ([xshift=0.1cm, yshift=0.2cm]h2.north) {\tiny $\T'_2$};
\node[black,draw=none,fill=none] at ([xshift=0.1cm, yshift=0.2cm]h5.north) {\tiny $\T'_5$};
\node[orange,draw=none,fill=none] at ([xshift=-0.25cm, yshift=0.15cm]h1.north) {\tiny $\T'_1$};
\node[blue,draw=none,fill=none] at ([xshift=0.25cm, yshift=0.1cm]rootleafA.north) {\tiny $\T'_3$};
\node[orange,draw=none,fill=none] at ([xshift=-0.2cm, yshift=0.2cm]h4.north west) {\tiny $\T'_4$};
\node[blue,draw=none,fill=none] at ([xshift=-0.2cm, yshift=0.2cm]rootleafC.north west) {\tiny $\T'_6$};
\node[green!50!black,draw=none,fill=none] at ([xshift=0.15cm, yshift=0.2cm]rleafG.north) {\tiny $\T'_7$};
\node[black,draw=none,fill=none] at ($([xshift=0cm, yshift=0.3cm]root)$) {\scriptsize $\T'$};
\end{tikzpicture}
\caption{
Example of an extended tree $\T'_{\text{ext}} = \T' \cup \T'_1 \cup \cdots \cup \T'_7$, formed by a subtree $\T'$ (black nodes) and its extensions (colored nodes). Each dotted triangle corresponds to a subtree $\T'_n$, rooted at a leaf $n \in \L(\T')$ and extended to depth $h_n$. The depths vary: $h_2 = h_5 = 0$ (black boxes), $h_1 = h_4 = 1$ (orange), $h_3 = h_6 = 2$ (blue), and $h_7 = 3$ (green). The leaves of $\T'_{\text{ext}}$ appear at different levels depending on the values of $(h_n)$ and the level $\l(n)$ of the leaves $n \in \L(\T') = \{1,2,3,4,5,6,7\}$.
}
\label{fig:example_pruning}
\end{figure}
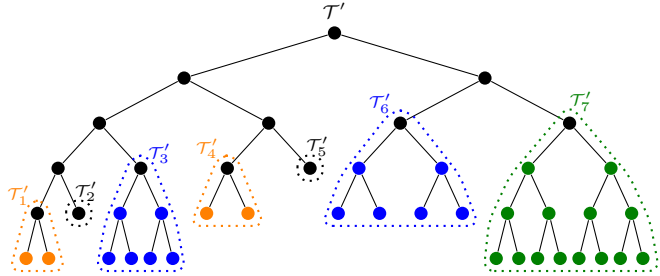

Observe that the total number of leaves in the extended pruning $\T'_\text{ext}$ is
\begin{equation}
\label{eq:partition_ext}
    |\L(\T'_\text{ext})| = \sum_{n \in \L(\T')} |\L(\T'_n)|.
\end{equation}

Define 
\begin{equation}
\label{eq:def_s_n}
s_n := \min_{n' \in \L(\T'_\text{ext})} s_{n'}, \quad n \in \N(\T').
\end{equation}
Remark also that by definition of the Besov norm in \eqref{eq:def_besov} (and via the usual embedding $(p,q)$ fixed - see, e.g., \cite{gine2021mathematical}), one has for any $n \in \L(\T'), \|f\|_{s_n} \ge \|f\|_{s_{n'}},n'\in \L(\T'_{\text{ext}})$. In particular, every tree extension $\T'_n$ at node $n \in \L(\T')$ has $|\L(\T'_n)|=2^{h_nd}$ leaves.

\paragraph{Case $(\ell_t)$ convex.}
Applying Theorem~\ref{theorem:local_besov_regret_all_pruning} in the convex case on the extended pruning $\T'_\text{ext}$, gives
\begin{equation}
\label{eq:pruning_convex_before_jensen}
    R_T(f) \leq CG \sum_{n' \in \L(\T'_\text{ext})} \big(B\sqrt{|T_{n'}|} 
+ \|f\|_{s_{n'}} \cdot 2^{-\l({n'}) s_{n'}} \cdot |T_{n'}|^{r_{n'}}\big)
\end{equation}
with $C$ some constant that hides $\log T$ terms that can change from an inequality to another and $r_{n'} \in \{\tfrac{1}{2},1-\tfrac{s_{n'}}{d}\}$ is the local rate described in Theorem~\ref{theorem:local_besov_regret_all_pruning}. Note that by \eqref{eq:def_s_n} one has $r_n' \leq r_n, n'\in \L(\T'_n)$. Recall that for every $n' \in \L(\T'_\text{ext}), \l(n') = \l(n) + h_n$ for $n \in \L(\T')$ and  since every leaves in $\L(\T'_n)$ is partitioning each terminal node $n \in \T'$, one has by Jensen's inequality:
\begin{equation}
\label{eq:time_partition_ext}
    \sum_{n' \in \L(\T'_\text{ext})} \sqrt{|T_{n'}|} = \sum_{n \in \L(\T')} \sum_{n' \in \L(\T'_n)} \sqrt{|T_{n'}|} \leq \sum_{n \in \L(\T')} \sqrt{|\L(\T'_n)||T_{n}|}.
\end{equation}
Then, by \eqref{eq:def_s_n},\eqref{eq:pruning_convex_before_jensen} and \eqref{eq:time_partition_ext} one gets (with $r_{n'} \leq r_n, n' \in \L(\T'_n)$)
\begin{align}
   R_T(f) &\leq CG \sum_{n \in \L(\T')} \sum_{n' \in \L(\T'_n)} \big(B\sqrt{|T_{n'}|} + \|f\|_{s_{n}}2^{-(\l(n) + h_n) s_{n}} |T_{n'}|^{r_n}\big) \notag \\
   &\leq CG \sum_{n \in \L(\T')} \big(B\sqrt{|\L(\T'_n)||T_n|} + \|f\|_{s_{n}}2^{-(\l(n)+h_n)s_{n}} \textstyle \sum_{n' \in \L(\T'_n)}  |T_{n'}|^{r_n}\big). \label{eq:opt_pruning_convex}
\end{align}

Further, applying Hölder's inequality over the sum over $n' \in \L(\T'_n)$ in \eqref{eq:opt_pruning_convex} with $(1 - r_n) + r_n = 1$ ($r_n$ is constant over the sum in $n'$):
\begin{align}
\label{eq:holder_pruning}
R_T(f) &\leq C G \sum_{n \in \L(\T')} \big(B\sqrt{|\L(\T'_n)||T_n|} +  \|f\|_{s_n}2^{-\l(n)s_n}2^{-h_ns_n} |\L(\T'_n)|^{1-r_n}\big(\textstyle\sum_{n' \in \L(\T'_n)}|T_{n'}|\big)^{r_n}\big) \\
&= C G\sum_{n\in \L(\T')}\big(B\sqrt{|T_n|2^{h_nd}} + \|f\|_{s_n}2^{-\l(n)s_n}2^{dh_n(1-\frac{s_n}{d}-r_n)}|T_n|^{r_n}\big) \notag
\end{align}
where we used
\[
\sum_{n' \in \L(\T'_n)} |T_{n'}| = |T_n| \quad \text{and} \quad 2^{-h_ns_n} |\L(\T'_n)|^{1-r_n} = 2^{-h_ns_n} (2^{dh_n})^{1-r_n} = 2^{dh_n(1-\frac{s_n}{d}-r_n)}.
\]
Define the local regrets under the sum over $n \in \N(\T')$ by \[R_{n}(f) := B\sqrt{|T_n|2^{h_nd}} + \|f\|_{s_n}2^{-\l(n)s_n}2^{dh_n(1-\frac{s_n}{d}-r_n)}|T_n|^{r_n},\] that we now want to optimize in $h_n \in \mathbb N$. This leads to two different cases depending on the values of the local exponent $r_n$, defined in Theorem~\ref{theorem:local_besov_regret_all_pruning}.
\begin{itemize}[leftmargin=0.5cm]
    \item \emph{Case $s_n \geq \frac{d}{2}$ or $p < 2$: $r_n = \frac{1}{2}$} \\
The local regret grows as:
\[
B\sqrt{|T_n|2^{h_nd}} + \|f\|_{s_{n}}2^{-\l(n)s_n}2^{-h_n(s_{n}-\frac{d}{2})}\sqrt{|T_n|}.
\]
Therefore, setting $h_n =  \max\big\{0,\big\lceil\frac{1}{s_n}\log_2\big(2^{-\l(n)s_n}\|f\|_{s_n}B^{-1}\big)\big\rceil \big\}$ this entails 
\[
R_n(f) \leq C\max\{B ,B^{1-\frac{d}{2s_n}}(2^{-\l(n)s_n}\|f\|_{s_n})^{\frac{d}{2s_n}}\}\sqrt{|T_n|}
\]
    \item \emph{Case $s_n < \frac{d}{2}$: $r_n = 1 - \frac{s_n}{d}$} \\
The local regret grows as:
\[R_n(f) = B\sqrt{|T_n|2^{h_nd}} + \|f\|_{s_n} 2^{-\l(n)s_n} |T_n|^{1-\frac{s_n}{d}},\]
and the best choice is $h_n = 0$ in this case that entails
\[
R_n(f) = B\sqrt{|T_n|} + \|f\|_{s_n}2^{-\l(n)s_n} |T_n|^{1-\frac{s_n}{d}}
\]
\end{itemize}
Finally, in the case $(\ell_t)$ are convex losses, we deduce that the regret is upper bounded as
\begin{align}
    R_T(f) &\leq C G\sum_{n\in \L(\T')} R_n(f) \notag \\
    &\leq  C G\sum_{n\in \L(\T')} \big(\max\{B ,B^{1-\frac{d}{2s_n}}(2^{-\l(n)s_n}\|f\|_{s_n})^{\frac{d}{2s_n}}\}\sqrt{|T_n|}\big)\ind{s_n \geq \frac{d}{2}} + \big(B \sqrt{T_n} + 2^{-\l(n)s_n}\|f\|_{s_n} |T_n|^{1-\frac{s_n}{d}}\big)\ind{s_n < \frac{d}{2}}
\end{align}

\paragraph{Case $(\ell_t)$ exp-concave.}
Applying Theorem~\ref{theorem:local_besov_regret_all_pruning} in the exp-concave case on the extended pruning $\T'_\text{ext}$, gives
\begin{equation}
    R_T(f) \leq C G \bigg( B|\L(\T'_\text{ext})| + \sum_{n' \in \L(\T'_\text{ext})} \|f\|_{s_{n'}} \cdot 2^{-\l({n'}) s_{n'}} \cdot |T_{n'}|^{r_{n'}} \bigg)
\end{equation}
with $C$ some constant that hides $\log T$ terms and that can change from an inequality to another and $r_{n'} \in \{\tfrac{1}{2},1-\tfrac{s_{n'}}{d}\}$ is the local rate described in Theorem~\ref{theorem:local_besov_regret_all_pruning}. Note that $|\L(\T'_\text{ext})| = \sum_{n \in \L(\T')} |\L(\T'_n)|$ and again for every $n' \in \L(\T'_\text{ext}), \l(n') = \l(n) + h_n$ for $n \in \L(\T')$ and $r_{n'} \leq r_n, n' \in \L(\T'_n)$. We get
\begin{equation}
 \label{eq:opt_pruning_exp_concave}
   R_T(f) \leq CG \sum_{n \in \L(\T')} \bigg(B|\L(\T'_n)| +  \|f\|_{s_{n}}2^{-(\l(n) + h_n) s_{n}} \textstyle \sum_{n' \in \L(\T'_n)} |T_{n'}|^{r_n}\bigg).
\end{equation}

Using $|\L(\T'_n)| = 2^{h_nd}$ and applying Hölder's inequality over the sum over $n' \in \L(\T'_n)$ in \eqref{eq:opt_pruning_exp_concave} with $(1 - r_n) + r_n = 1$ as in \eqref{eq:holder_pruning} entails
\[
   R_T(f) \leq CG \sum_{n \in \L(\T')} \big(B2^{h_nd}+ \|f\|_{s_{n}}2^{-\l(n)s_n} 2^{dh_n(1-\frac{s_n}{d}-r_n)}|T_{n}|^{r_n}\big).
\]
Again, we define the local regrets under the sum over $n \in \N(\T')$ as \[R_{n}(f) := B2^{h_nd} + \|f\|_{s_n}2^{-\l(n)s_n}2^{dh_n(1-\frac{s_n}{d}-r_n)}|T_n|^{r_n},\] that we optimize in $h_n \in \mathbb N$. The cases are the same as for the convex case, according to the values of the local exponent $r_n$, defined in Theorem~\ref{theorem:local_besov_regret_all_pruning}.
\begin{itemize}[leftmargin=0.5cm]
    \item \emph{Case $s_n \geq \frac{d}{2}$ or $p < 2$: $r_n = \frac{1}{2}$} \\
    The local regret grows as
    \[
    R_n(f) = B2^{h_nd} + \|f\|_{s_n}2^{-\l(n)s_n}2^{-h_n(s_n-\frac{d}{2})}\sqrt{|T_n|}.
    \]
    Afterwards, optimizing in $h_n$ such that
    \[
    B2^{h_nd} = 2^{-\l(n)s_n}\|f\|_{s_n} 2^{-h_n(s_n-\frac{d}{2})} \sqrt{|T_n|}
    \]
    leads to $h_n = \max\big\{0,\big\lceil \frac{1}{2s_n + d}\log_2\big((B^{-1}2^{-\l(n)s_n}\|f\|_{s_n})^2|T_n|\big)\big\rceil\big\}$, that entails
    \[
    R_n(f) \leq C \max\bigg\{B, B^{1 - \frac{2d}{2s_n+d}}\big(2^{-\l(n)s_n}\|f\|_{s_n}\big)^{\frac{2d}{2s_n + d}}|T_n|^{\frac{d}{2s_n+d}}\bigg\}
    \]
    \item \emph{Case $s_n < \frac{d}{2}$: $r_n = 1 - \frac{s_n}{d}$} \\
    The local regret grows as 
    \[
    R_n(f) = B2^{h_nd} + 2^{-\l(n)s_n}\|f\|_{s_n}|T_n|^{1-\frac{s_n}{d}},
    \]
    and the best choice is $h_n = 0$ which entails
    \[
    R_n(f) = B + 2^{-\l(n)s_n}\|f\|_{s_n}|T_n|^{1-\frac{s_n}{d}},
    \]
\end{itemize}
Finally, with $(\ell_t)$ exp-concave losses, the regret is bounded as 
\begin{align}
    R_T(f) &\leq C G\sum_{n\in \L(\T')} R_n(f) \notag \\
    &\leq C G\sum_{n\in \L(\T')} \bigg(\max\big\{B, B^{1 - \frac{2d}{2s_n+d}}\big(2^{-\l(n)s_n}\|f\|_{s_n}\big)^{\frac{2d}{2s_n + d}}|T_n|^{\frac{d}{2s_n+d}}\big\}\ind{s_n \geq \frac{d}{2}} \\
    &\qquad \qquad \qquad + \big(B + 2^{-\l(n)s_n}\|f\|_{s_n} |T_n|^{1-\frac{s_n}{d}}\big)\ind{s_n < \frac{d}{2}}\bigg).
\end{align}

\textit{Remark.} Taking, $\T'$ as the pruning associated to the root, this entails  $O(T^{\frac{d}{2s+d}}) = O(T^{1-\frac{2s}{2s+d}})$ which is minimax-optimal for this case - see \cite{rakhlin2015online}.

\end{proof}

\newpage

\section{Discussion on the unbounded case: $s < \frac{d}{p}$}
\label{appendix:discussion_unbounded_case}

As in most previous works in statistical learning, this paper primarily considers competitive functions $f \in B_{pq}^s(\X)$ with $s > \nicefrac{d}{p}$, which ensures that $f \in L^\infty(\X)$ with $\|f\|_\infty < \infty$. 

A natural question is whether our Algorithm~\ref{alg:training} remains competitive - that is, achieves sublinear regret - in the more challenging regime where $s < \nicefrac{d}{p}$.
Indeed, in the case $s < \nicefrac{d}{p}$, prediction rules may no longer be bounded in sup-norm. For example, the function $f(x) = x^{-\nicefrac{1}{2}} \mathbf{1}_{x \in (0,1]}$ belongs to $L^p([0,1])$ for $p < 2$ but not to $L^\infty([0,1])$, illustrating the type of singularity permitted when $s < \nicefrac{d}{p}$. In such cases, the boundedness condition $\|f_J - f\|_\infty < \infty$ required in \eqref{eq:bound_R2} may fail, where $f_J$ denotes the truncated $J$-level wavelet expansion defined in \eqref{eq:wavelet_decomposition}. Nevertheless, we discuss how Algorithm~\ref{alg:training} can still offer performance guarantees in certain settings, particularly when the input data $\{x_t\}$ are well distributed over $\X$.

Indeed, by Hölder's inequality, \eqref{eq:bound_R2} can be upper bounded as:
\begin{equation}
\label{eq:lp_norm}
    \sumT |f_J(x_t)-f(x_t)| \leq T \bigg(\frac{1}{T}\sumT |f_J(x_t) - f(x_t)|^p\bigg)^{\frac{1}{p}},
\end{equation}
where the sum on the right-hand side defines an empirical $\ell^p$ semi-metric over the input data $\{x_t\}_{t=1}^T$, denoted:
\[
d_T^p(f_J,f) := \bigg(\frac{1}{T}\sumT |f_J(x_t) - f(x_t)|^p\bigg)^{\frac{1}{p}}.
\]
The upper bound \eqref{eq:lp_norm} suggests that tighter control may be obtained by focusing on the empirical norm $d_T^p(f_J, f)$ rather than on the sup-norm, which may not be finite.

\paragraph{First case: the empirical semi-norm $d_T^p$ approximates the $L^p$ norm.} 
Assume that the semi-norm $d_T^p(f_J, f)$ is close to the true $L^p$ norm $\|f_J - f\|_{L^p}$. Such an equivalence is expected when the data $\{x_t\}$ are well distributed over $\X$, for example when $x_t \sim \mathcal{U}(\X)$ i.i.d., or when $x_t$ are equally spaced, such as $x_t = \frac{t}{T}$ for $t = 1, \dots, T$. By the law of large numbers or standard concentration arguments, one typically has $d_T^p(f_J, f) \approx \|f_J - f\|_{L^p}$ in expectation or with high probability.

Classical approximation results (e.g., \cite[Prop.~4.3.8]{gine2021mathematical}) then yield $\|f_J - f\|_{L^p} \lesssim 2^{-Js}$ for $f \in B_{pq}^s(\X) \subset L^p(\X)$. Optimizing over $J$ to balance estimation and approximation regrets leads to a regret bound of $O(T^{1-\frac{s}{d}})$ - see the proof of Theorem~\ref{appendix:proof_theorem:regret_besov}, last case $\beta < 0$. This regret is sublinear as soon as $s > 0$ and becomes linear when $s = 0$, as is typical for $f \in L^p$.

Nevertheless, minimax analysis from \cite{rakhlin2014online,rakhlin2015online} shows that a regret of $O(T^{1 - \nicefrac{1}{p}})$ is possible, which improves upon our bound whenever $s < \nicefrac{d}{p}$. Whether a constructive algorithm achieving this minimax regret exists in the regime $s < \nicefrac{d}{p}$ remains, to the best of our knowledge, an open and interesting question.

\paragraph{Second case: the semi-norm $d_T^p$ fails to approximate the $L^p$ norm.}
If the data points $\{x_t\}$ are concentrated near singularities (e.g., near $0$ in the example above), the empirical norm $d_T^p(f_J, f)$ can differ significantly from the true norm $\|f_J - f\|_{L^p}$, making the latter less informative in practice.

In such adversarial or non-uniform settings, it seems preferable to control the empirical norm $d_T^p(f_J, f)$ directly, as it more accurately reflects the distribution of the observed data. Addressing this challenge may require adaptive sampling strategies, localization techniques, or alternative norms that account for the geometry or density of the input distribution.

\newpage 
\section{Review of multi-resolution analysis}

\label{appendix:wavelet}

In this section we present some of the basic ingredients of wavelet theory.
Let's assume we have a multivariate function $f : \R^d \to \R$.

\begin{defi}[Scaling function]
We say that a function $\phi \in L^2(\R^d)$ is the scaling function of a multiresolution analysis (MRA) if it satisfies the following conditions:
\begin{enumerate}
    \item the family \[\textstyle \{x \mapsto \phi(x - k)= \prod_{i=1}^d \phi(x_i - n_i) : k \in \mathbb Z^d\}\]
    is an ortho-normal basis, that is $\langle \phi(\cdot - k),\phi(\cdot - n) \rangle = \delta_{k,n}$;
    \item the linear spaces \[\textstyle V_0 = \big\{f = \sum_{k \in \mathbb Z^d} c_k \phi(\cdot - k), (c_k) : \sum_{k \in \mathbb Z^d} c_k^2 < \infty\big\},\dots, V_j = \{h = f(2^j \cdot) : f \in V_0\}, \dots,\]
    are nested, i.e. $V_{j-1} \subset V_j$ for all $j \geq 0$.
\end{enumerate}
\end{defi}

We note that under these two conditions, it is immediate that the functions \[\{\phi_{j,k} = 2^{dj/2}\phi(2^j \cdot-k)\, , k \in \mathbb Z^d\}\] form an ortho-normal basis of the space $V_j, j \in \mathbb N$.
One can define the projection kernel of $f$ over $V_j$ (from here we also say kernel projection at scale or level $j$) as \begin{equation}
\label{eq:proj_kernel}
    K_j f(x) := \sum_{k \in \mathbb Z^d} \langle f , \phi_{j,k} \rangle \phi_{j,k}(x) = \int_{\R^d} K_j(x,y) f(y) \, dy\,,
\end{equation}
with $K_j(x,y) = \sum_{n \in \mathbb Z^d}  \phi_{j,k}(x)\phi_{j,k}(y) = \sum_{k\in \mathbb Z^d} 2^{dj}\phi(2^jx-n)\phi(2^jy-n)$ (which is not of convolution type) but has comparable approximation properties that we detail after. 

\paragraph{Incremental construction via wavelets.}
Since the spaces $(V_j)$ are nested, one can define nontrivial subspaces as the orthogonal complements $W_j := V_{j+1} \ominus V_j$. We can then telescope these orthogonal complements to see that each space $V_j, j \geq j_0$ can be written as
\[
V_j = V_{j_0} \oplus \bigg(\bigoplus_{l=j_0}^{j} W_l\bigg) \quad \text{for any } j_0 \in \mathbb{N}.
\]
Let $\psi$ be a mother wavelet corresponding to the scaling function $\phi$. The associated wavelets are defined as follows: for $E = \{0,1\}^d \setminus \{0\}$, we set
\[
\psi^\epsilon(x) = \psi^{\epsilon_1}(x_1) \cdots \psi^{\epsilon_d}(x_d), \quad 
\psi^\epsilon_{j,n} = 2^{jd/2} \psi^\epsilon(2^j x - n), \quad 
j \geq 0, \quad n \in \mathbb{Z}^d,
\]
where $\psi^0 = \phi$, $\psi^1 = \psi$. For each $j$, these functions form an orthonormal basis of $W_j$.

Analogously, one can now observe that for every $j \geq j_0$,
\begin{equation} 
\label{eq:telescopage}
K_j f = K_{j_0} f + \sum_{l=j_0}^{j-1} \left(K_{l+1} f - K_l f\right),
\end{equation}
where each increment in the sum can be written as
\[
K_{j+1} f - K_j f = \sum_k \sum_\epsilon \langle f, \psi^\epsilon_{j,k} \rangle \psi^\epsilon_{j,k},
\]
where for each $j \geq 1$, the set
\[
\left\{ \psi^e_{j,k} = 2^{dj/2} \psi^\epsilon(2^j x - k) : \epsilon \in E,\, k \in \mathbb{Z}^d \right\}
\]
forms a basis of $W_j$ for some wavelet $\psi$, with $E := \{0,1\}^d \setminus \{0\}$. For simplicity, we include the index $\epsilon$ in the multi-index $k$. Finally, the set $\{\phi_{j_0,k}, \psi_{j,k}\}$ constitutes a \emph{wavelet basis}.

For our results we will not be needing a particular wavelet basis, but any that satisfies the following key properties.

\begin{defi}[$S$-regular wavelet basis]
\label{def:regular_wavelet}
Let $S \in \mathbb{N}^*$ and $j_0 = 0$. The multiresolution wavelet basis \[\{\phi_{k}=\phi(\cdot - k), \psi_{j,k} = 2^{jd/2}\psi(2^d\cdot - k)\}\] of $L^2(\R^d)$ with associated projection kernel $K(x,y) = \sum_k \phi_k(x)\phi_k(y)$ is said to be $S$-regular if the following conditions are satisfied:
\begin{enumerate}[label=(D.\arabic*),ref=D.\arabic*,noitemsep,topsep=-\parskip]
    \item \label{def:s_regular_wavelet:D1} \textbf{Vanishing moments and normalization:}
    \[
    \textstyle \int_{\mathbb{R}^d} \psi(x)\, x^\alpha\, dx = 0 \quad \text{for all multi-indices } \alpha \text{ with } |\alpha| < S,
    \qquad \int_{\mathbb{R}^d} \phi(x)\, dx = 1.
    \]
    Moreover, for all $v \in \mathbb{R}^d$ and $\alpha$ with $1 \le |\alpha| < S$,
    \[
    \textstyle \int_{\mathbb{R}^d} K(v, v + u)\, du = 1, \quad
    \int_{\mathbb{R}^d} K(v, v + u)\, u^\alpha\, du = 0.
    \]

    \item \label{def:s_regular_wavelet:D2} \textbf{Bounded basis sums:}
    \[
    \textstyle M_\phi := \sup_{x \in \mathbb{R}^d} \sum_{k} |\phi(x - k)| < \infty, \qquad
    M_\psi := \sup_{x \in \mathbb{R}^d} \sum_{k} |\psi(x - k)| < \infty .
    \]

    \item \label{def:s_regular_wavelet:D3} \textbf{Kernel decay:} For $\kappa(x,y)$ equal to $K(x,y)$ or $\sum_{k} \psi(x - k) \psi(y - k)$, there exist constants $c_1, c_2 > 0$ and a bounded integrable function $\phi : [0,\infty) \to \mathbb{R}$ such that
    \[
    \textstyle \sup_{v \in \mathbb{R}^d} |\kappa(v, v - u)| \le c_1 \phi(c_2 \|u\|), \qquad C_S := \int_{\mathbb{R}^d} \|u\|^S \phi(\|u\|)\, du < \infty.
    \]
\end{enumerate}
\end{defi}


\paragraph{Case of a bounded compact $\X \subset \R^d$.}
The above definition applies to wavelet systems on $\R^d$, but can be extended to compact domains $\X \subset \mathbb{R}^d$ using standard boundary-corrected or periodized constructions. Notable examples include the compactly supported orthonormal wavelets of \citet[Chapter~7]{daubechies1992ten} and the biorthogonal, symmetric, and highly regular wavelet bases of \citet{cohen1992biorthogonal}.
Just as in the case of $\R^d$, we can build a tensor-product wavelet basis $\{\phi_k, \psi_{j,k}\}$, for example using periodic or boundary-corrected Daubechies wavelets. At the $j$-th level, there are now $O(2^{jd})$ wavelets $\psi_{j,k}$, which we index by $k \in \Lambda_j$, the set of indices corresponding to wavelets at level $j$. This coincides with the expansion used in Equation~\eqref{eq:wavelet_decomposition}.

\paragraph{Control of wavelet coefficients and characterization of Hölder spaces.} 
Remarkably, the norm of the space $\C^s(\X)$ has a useful characterisation by wavelet bases - see \cite{meyer1990ondelettes} or \cite{gine2021mathematical} for a review on the characterisation of smoothness according to wavelet basis.

\begin{prop}
\label{prop:wavelet_coeff}
Let $s > 0$ we thus have the following:
\begin{equation}
\label{eq:wavelet_coeff}
f \in \C^s(\X) \implies \sup_k |\langle f, \psi_{j,k}\rangle| \leq C |f|_s 2^{-j(s+d/2)}\, ,
\end{equation}
where $C=C(\psi,S)$ is some constant that depends only on the ($S$-regular) wavelet basis. 
\end{prop}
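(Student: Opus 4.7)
The plan is to exploit the vanishing moments of the mother wavelet $\psi$ (property \ref{def:s_regular_wavelet:D1}) combined with a Taylor expansion of $f$ around the spatial center of $\psi_{j,k}$. After the change of variables $y = 2^j x - k$, the coefficient takes the form
\[
\langle f, \psi_{j,k}\rangle \;=\; 2^{-jd/2}\int_{\R^d} f\bigl(2^{-j}(y+k)\bigr)\,\psi(y)\,dy,
\]
so the prefactor $2^{-jd/2}$ already supplies the $d$-dependent part of the target bound, and everything reduces to proving that the residual integral is $O(|f|_s\,2^{-js})$ uniformly in $k$.

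Next, I would Taylor-expand $f$ around $x_0 := 2^{-j}k$ up to order $m = \lfloor s \rfloor$ (extending $f$ to a neighborhood of $\X$ via a Whitney-type extension preserving $|f|_s$, to handle indices $k$ whose wavelet support crosses $\partial \X$). Writing $h = 2^{-j}y$,
\[
f(x_0+h) \;=\; \sum_{|\alpha|\le m}\frac{D^\alpha f(x_0)}{\alpha!}\,h^\alpha \;+\; R_{x_0}(h),
\]
where the $\C^s$ regularity yields the classical Hölder-type remainder estimate $|R_{x_0}(h)| \le C_s\,|f|_s\,\|h\|_\infty^{s}$ (integral form of the Taylor remainder applied to $D^\alpha f$ with $|\alpha| = m$).

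Now comes the key cancellation. Since the basis is $S$-regular with $S > s \ge m$, property \ref{def:s_regular_wavelet:D1} gives $\int_{\R^d} y^\alpha \psi(y)\,dy = 0$ for every multi-index with $|\alpha| \le m < S$. Consequently every polynomial term $\frac{D^\alpha f(x_0)}{\alpha!}(2^{-j})^{|\alpha|}\int y^\alpha \psi(y)\,dy$ vanishes, and only the remainder contributes:
\[
\int_{\R^d} f\bigl(2^{-j}(y+k)\bigr)\,\psi(y)\,dy \;=\; \int_{\R^d} R_{x_0}(2^{-j}y)\,\psi(y)\,dy.
\]
Bounding with the Hölder remainder estimate then yields
\[
\Bigl|\int_{\R^d} R_{x_0}(2^{-j}y)\,\psi(y)\,dy\Bigr| \;\le\; C_s\,|f|_s\,2^{-js}\int_{\R^d}\|y\|_\infty^{s}\,|\psi(y)|\,dy,
\]
and the final integral is finite because $s < S$ together with the decay/compact-support assumption \ref{def:s_regular_wavelet:D3}. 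Combining with the prefactor $2^{-jd/2}$ delivers the claim with a constant $C(\psi,S)$ independent of $j,k$.

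The step I expect to be the only genuine obstacle is the boundary/extension issue: for indices $k$ such that $\operatorname{supp}(\psi_{j,k}) \not\subset \X$, the Taylor expansion must be carried out for a $\C^s$ extension of $f$ to a neighborhood of $\X$. This is resolved by a standard Whitney-type (or reflection) extension which preserves the Hölder semi-norm up to a universal multiplicative constant that can be absorbed into $C(\psi,S)$. The remainder of the argument is essentially linear algebra of polynomials against a wavelet with vanishing moments.
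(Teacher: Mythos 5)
Your proof is correct and follows essentially the same route as the paper's: Taylor expansion of $f$ at the support center $2^{-j}k$, cancellation of the polynomial part via the vanishing moments in \ref{def:s_regular_wavelet:D1}, the Hölder remainder bound $|R_{x_0}(h)|\lesssim |f|_s\|h\|_\infty^s$, and the change of variables producing the factor $2^{-j(s+d/2)}$ times the finite moment integral $\int \|u\|^s|\psi(u)|\,du$. Your additional remark on extending $f$ beyond $\partial\X$ via a Whitney-type extension addresses a boundary issue the paper's proof silently sidesteps (it simply assumes $f\in\C^s(\R^d)$), so it is a welcome refinement rather than a deviation.
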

\begin{proof}
Let $\psi$ be a compactly supported wavelet in $\mathbb{R}^d$ with $S$ vanishing moments, i.e.,
\[
\int_{\mathbb{R}^d} x^\beta \psi(x)\,dx = 0 \quad \text{for all multi-indices } \beta \text{ with } |\beta| < S.
\]
Assume that $f \in \C^s(\mathbb{R}^d)$ for some $s > 0$, with $ s < S$, so the wavelet vanishing moments match the regularity of $f$. Let $\psi_{j,k}(x) := 2^{jd/2} \psi(2^j x - k)$ be the wavelet at scale $j$ and location $k \in \mathbb{Z}^d$. The wavelet coefficient is given by
\[
c_{j,k} := \langle f, \psi_{j,k} \rangle = \int_{\mathbb{R}^d} f(x) \psi_{j,k}(x) \, dx.
\]
We define the center of the wavelet support as $x_{j,k} := 2^{-j}k$ and write a Taylor expansion of $f$ at $x_{j,k}$:
\[
f(x) = P_{x_{j,k}}(x) + R_{x_{j,k}}(x),
\]
where $P_{x_{j,k}}$ is the Taylor polynomial of degree $\lfloor s \rfloor$ and $|R_{x_{j,k}}(x)| \leq  |f|_s \|x - x_{j,k}\|_\infty^s$ for $x$ near $x_{j,k}$ and where  $|f|_s = \sup_{|m| = \lfloor s \rfloor} \|D^m f\|_{s - |m|}$.

Using the vanishing moments of $\psi$, we have
\[
c_{j,k} = \int_{\mathbb{R}^d} R_{x_{j,k}}(x) \psi_{j,k}(x) \, dx.
\]

Now perform the change of variables $u = 2^j x - k$, so $x = 2^{-j} u + x_{j,k}$ and $dx = 2^{-jd} du$:
\[
c_{j,k} = 2^{-j d/2} \int_{\mathbb{R}^d} R_{x_{j,k}}(x_{j,k} + 2^{-j} u) \psi(u) \, du.
\]

By the Hölder remainder estimate, we have
\[
|R_{x_{j,k}}(x_{j,k} + 2^{-j} u)| \leq |f|_s \|2^{-j} u\|^s = |f|_s 2^{-j s} \|u\|^s.
\]

Therefore,
\[
|c_{j,k}| \leq |f|_s 2^{-j(s + d/2)} \int_{\mathbb{R}^d} |\psi(u)| \|u\|^s \, du,
\]

and since $\psi$ is compactly supported and smooth, the integral is finite. Hence, defining $C(\psi,s) = \int_{\mathbb{R}^d} |\psi(u)| \|u\|^s \, du < \infty$ we get the result.
\end{proof}

\paragraph{Remark.} The smoothness $s$ of $f$ translates into faster decay of the coefficients given sufficiently ($S > s$) regular wavelets.

\newpage \section{Summary of the results and comparison to the literature}
\label{appendix:all_rates}


\captionsetup[table]{labelfont=bf, font=small}
\begin{table}[htbp]
\centering
\setlength{\tabcolsep}{10pt}
\caption{Regret rates, parameter requirements and time complexity for online regression algorithm with $(\ell_t)$ square losses and $s > \nicefrac{d}{p}$.}
\label{table:rates_all}
\resizebox{\textwidth}{!}{
\begin{tabular}{@{}llllll@{}}
\toprule
\multicolumn{2}{l}{\textbf{Paper}} & \textbf{Setting} & \textbf{Input Parameters} & \textbf{Regret Rate} & \textbf{Complexity} \\
\midrule
\multicolumn{2}{l}{\citet{vovk2006metric}} 
& $f \in B^s_{pq},\ p,q \geq 1$ & $s,p, B \geq \|f\|_\infty$ & $T^{1 - \frac{s}{s+d}}$ & $\exp(T) + Td$ \\
\cmidrule(lr){1-6}
\multicolumn{2}{l}{\multirow{2}{*}{\citet{vovk2007competing}}} 
& $f \in B^s_{pq},\ p \geq 2,\ q \in [\frac{p}{p-1},p]$ & \multirow{2}{*}{$s,p, B \geq \|f\|_\infty$} & $T^{1 - \frac{1}{p}}$ & \multirow{2}{*}{Not feasible} \\
& & $f \in \C^s,\ p = \infty,\ s \ge \frac{d}{2}$ & & $T^{1 - \frac{s}{d} + \varepsilon}$ & \\
\cmidrule(lr){1-6}
\multicolumn{2}{l}{\multirow{3}{*}{\citet{gaillard2015chaining}}} 
& $f \in W^s_p,\ p \geq 2,\ s \geq \frac{d}{2}$ & \multirow{3}{*}{$s,p, B \geq \|f\|_\infty$} & $T^{1 - \frac{2s}{2s + d}}$ & $\exp(dT)$ \\
& & $f \in W^s_p,\ p > 2,\ s < \frac{d}{2}$ & & $T^{1 - \frac{s}{d}}$ & $\exp(dT)$ \\
& & $f \in \C^s,\ p = \infty,\ d=1,\ s > \frac{1}{2}$ & & $T^{1 - \frac{2s}{2s + 1}}$ & $\operatorname{poly}(T)$ \\
\cmidrule(lr){1-6}
\multicolumn{2}{l}{\citet{liautaud2025minimax}} 
& $f \in \C^s,\ p = \infty,\ s \in (\nicefrac{1}{2},1],\ d=1$ & $B \geq \|f\|_\infty$ & $T^{1 - \frac{2s}{2s + 1}}$ & $\operatorname{poly}(T)$ \\
\cmidrule(lr){1-6}
\multicolumn{2}{l}{\multirow{2}{*}{\citet{zadorozhnyi2021online}}} 
& $f \in W^s_p,\ p \geq 2,\ s \geq \frac{d}{2}$ & \multirow{2}{*}{$s,p$} & $T^{1 - \frac{2s}{2s + d} + \varepsilon}$ & \multirow{2}{*}{$\operatorname{poly}(T)d$} \\
& & $f \in W^s_p,\ p > 2,\ s < \frac{d}{2}$ & & $T^{1 - \frac{s}{d}\frac{p - \nicefrac{d}{s}}{p - 2} + \varepsilon}$ & \\
\specialrule{1.2pt}{1pt}{1pt}
\multirow{4}{*}{\textbf{This work}} 
& \multirow{2}{*}{Alg.~\ref{alg:training}} & $f \in B^s_{pq},\ p,q \geq 1,\ s \geq \frac{d}{2}$ or $p \leq 2$ & \multirow{2}{*}{$S \geq s, \varepsilon < s - \frac{d}{p}$} & $\sqrt{T}$ & \multirow{2}{*}{$\operatorname{poly}(T)S^d$} \\
& & $f \in B^s_{pq},\ p > 2,\ q \geq 1,\ s < \frac{d}{2}$ & & $T^{1 - \frac{s}{d}}$ & \\
\cmidrule(lr){2-6}
& \multirow{2}{*}{Alg.~\ref{alg:online_adaptive_wavelet}} & $f \in B^s_{pq},\ p,q \geq 1,\ s \geq \frac{d}{2}$ or $p \leq 2$ & \multirow{2}{*}{$S \geq s, \varepsilon < s - \frac{d}{p}, B \geq \|f\|_\infty$} & $T^{1 - \frac{2s}{2s + d}}$ & \multirow{2}{*}{$\operatorname{poly}(T)S^d$} \\
& & $f \in B^s_{pq},\ p > 2,\ q \geq 1,\ s < \frac{d}{2}$ & & $T^{1 - \frac{s}{d}}$ & \\
\specialrule{1.2pt}{1pt}{1pt}
\multicolumn{2}{l}{{\small \textit{Minimax rates}}}
& $f \in B^s_{pq},\ p,q \geq 1,\ s \geq \frac{d}{2}$ & \multirow{2}{*}{Non constructive} & $T^{1 - \frac{2s}{2s + d}}$ & \multirow{2}{*}{Non constructive} \\
\multicolumn{2}{l}{ \citet{rakhlin2014online,rakhlin2015online}}
 & $f \in B^s_{pq},\ p > 2,\ q \geq 1,\ s < \frac{d}{2}$ & & $T^{1 - \frac{s}{d}}$ & \\
\bottomrule
\end{tabular}
}
\end{table}

\paragraph{Comparison to \citet{vovk2007competing}.}
\citet{vovk2007competing} provide a general analysis for prediction in Banach spaces, focusing on the regime $s > \nicefrac{d}{p}$. They achieve regret rates of $O(T^{1 - \nicefrac{1}{p}})$ for certain Besov spaces $B_{pq}^s$ with $p \geq 2$ and $q \in [\nicefrac{p}{(p-1)}, p]$. These rates are independent of the smoothness parameter $s$, except in the case $p = \infty$, where they obtain $O(T^{1 - \frac{s}{d}})$. However, this remains suboptimal in their setting with square loss. In contrast, our analysis yields the minimax-optimal rate $O(T^{1 - \frac{2s}{2s + d}})$ over a broader class of Besov spaces $B_{pq}^s$ with arbitrary $p, q \in [1, \infty]$ and $s > \nicefrac{d}{p}$.

\paragraph{Comparison to \citet{vovk2006metric}.}
\citet{vovk2006metric} investigates prediction under general metric entropy conditions, proposing algorithms that compete with a reference class of functions in terms of covering numbers. While their approach is highly general and applies to a broad range of normed spaces, the regret bounds they derive, of order $O(T^{1 - \frac{s}{s + d}})$, still do not match the minimax-optimal rates known for functions in $B_{pq}^s$.

\paragraph{Comparison to \citet{zadorozhnyi2021online}.}
Their approach focuses on Sobolev spaces $W_p^s(\X)$ with $p \geq 2$ and $s > \tfrac{d}{p}$, and they obtain suboptimal rates, in the regime $s < \tfrac{d}{2}$, of $O(T^{1 - \frac{s}{d} \cdot \frac{p - \nicefrac{d}{s}}{p - 2} + \varepsilon})$, for arbitrarily small $\varepsilon$. 
In comparison, our rates $O(T^{1 - \frac{2s}{2s + d}})$ are minimax-optimal over a broader class of Besov spaces $B^s_{pq}$ with arbitrary $s, p, q$ satisfying $s > \tfrac{d}{p}$, which include the Sobolev balls considered in their work.

\paragraph{Computational complexity.}
Most existing work in online nonparametric regression over Besov spaces (including Sobolev spaces), such as \citet{rakhlin2014online,rakhlin2015online,vovk2006metric,vovk2007competing}, does not provide efficient (i.e., polynomial-time) algorithms. 
The work by \citet{rakhlin2014online,rakhlin2015online} offers a minimax-optimal analysis, but does not yield constructive procedures - computing the offset Rademacher complexity, as required by their method, is numerically infeasible in practice.
The approach of using the Exponentiated Weighted Average (EWA) algorithm in nonparametric settings, as proposed by \citet{vovk2006metric}, suffers from both suboptimal regret rates and prohibitive computational complexity, since it requires updating the weights of each expert in a covering net, leading to a total cost of $O(\exp(T))$.
\citet{vovk2007competing} introduce the defensive forecasting approach, which also avoids efficient implementation as it relies on the so-called Banach feature map - a representation that is typically inaccessible or intractable in practice.
The Chaining EWA forecaster of \citet{gaillard2015chaining} achieves optimal regret bounds in the online nonparametric setting. However, its algorithm is provably polynomial-time only in the case $p = \infty$ and $d = 1$; in general dimensions and $p$, its direct implementation requires $O(\exp(dT))$ operations.
\citet{zadorozhnyi2021online} propose an efficient algorithm with total computational complexity of order $O(T^3 + dT^2)$. We note that their algorithm has a linear cost in $d$, making it particularly suitable for high-dimensional settings with smooth competitors in $W^s_p(\X)$ (with $s > \tfrac{d}{2}$).

Finally, our algorithms are both optimal and efficient, with computational costs (after $T$ rounds) of
\[ 
O(T \times J \times S^d) = O\bigg(T \frac{S}{d\varepsilon} \log_2(T) S^d \bigg) \quad \text{and} \quad O(T \times |\mathcal A|^\lambda \times J_0 \times J \times S^d) = O\bigg(T^{1 +
\frac{\lambda}{2}}\frac{S^2}{d^2\varepsilon^2} \log_2(T)^2 S^d\bigg)
\]
for Algorithm~\ref{alg:training} and Algorithm~\ref{alg:online_adaptive_wavelet} respectively (taking a partitioning tree of maximum depth $J_0 = \lceil \frac{S}{d \varepsilon} \log_2 T\rceil$).

\section{Besov embeddings in usual functional spaces}

We refer to \cite{gine2021mathematical,cohen2003numerical,devore1993constructive} 
for precise statements of the classical embedding theorems. 
For convenience, we recall some of the most useful embeddings in Table~\ref{tab:besov-embeddings}.
\begin{table}[H]
\centering
\caption{Classical embeddings of Besov spaces $B_{pq}^s$}
\renewcommand{\arraystretch}{1.3}
\begin{tabular}{lll}
\toprule
\textbf{Condition on $(s, p, q)$} & \textbf{Target Space} & \textbf{Embedding Type} \\
\midrule
$s > \frac{d}{p}$                 & $L^\infty$            & Continuous embedding \\
$s = \frac{d}{p}$, $q = 1$        & $L^\infty$            & Critical embedding \\
$s > d\left( \frac{1}{p} - \frac{1}{r} \right)$, $p < r$ & $L^r$ & Continuous embedding \\
$s_1 > s_2$                       & $B_{pq}^{s_2}$       & Continuous embedding \\
$s = s$, $p_1 \leq p_2$, $q_1 \leq q_2$ & $B_{p_2q_2}^s$ & Continuous embedding \\
$B_{pp}^s$                       & $W_p^{s}$             & Equivalence (for $s \in \mathbb{N}$) \\
$B_{\infty\infty}^s$            & $\C^s$       & Norm equivalence with Hölder \\
\bottomrule
\end{tabular}
\label{tab:besov-embeddings}
\end{table}

\section{Summary of optimal regret in Online Nonparametric Regression}
\label{appendix:optimal_rates}

This section summarizes the results in \cite{rakhlin2014online,rakhlin2015online} for mimimax-optimal rate of regret in the adversarial online nonparametric regression setting.

\begin{prop}[\cite{rakhlin2015online}]
Assume the sequential entropy at scale $\epsilon > 0$ is $O(\epsilon^{-\alpha}), \alpha>0$ for the target class function. Optimal regret is then summarized in the table:
\captionsetup[table]{labelfont=bf, font=small}
\begin{table}[h!]
\centering
\renewcommand{\arraystretch}{1.5}
\setlength{\tabcolsep}{12pt}
\caption{Optimal regret for different loss functions}
\begin{tabular}{@{} lcc @{}}
\toprule
\textbf{Loss Function} & \textbf{Range on $\alpha$} & \textbf{Optimal Regret} \\
\midrule
\multirow{2}{*}{\textbf{Absolute loss}}
  & \( \alpha \in (0, 2] \) & \( T^{\tfrac{1}{2}} \) \\
  & \( \alpha > 2 \)        & \( T^{1 - \tfrac{1}{\alpha}} \) \\
\cmidrule(lr){1-3}
\multirow{2}{*}{\textbf{Square loss}}
  & \( \alpha \in (0, 2] \) & \( T^{1 - \tfrac{2}{2+\alpha}} \) \\
  & \( \alpha > 2 \)        & \( T^{1 - \tfrac{1}{\alpha}} \) \\
\bottomrule
\end{tabular}
\end{table}

In particular, for Hölder functions $\C^s(\X), s > 0$, and $B^s_{pq}(\X), s > \frac{d}{p}$ one has $\alpha=\frac{d}{s}$.
\end{prop}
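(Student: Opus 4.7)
The plan is to establish matching upper and lower bounds on the minimax value $V_T(\mathcal{F})$ of the sequential regression game, both expressed through a Dudley-type chaining integral against the sequential entropy $\log N_{\text{seq}}(\epsilon,\mathcal{F},T) \lesssim \epsilon^{-\alpha}$, following the framework of Rakhlin--Sridharan. The specialization $\alpha = d/s$ for Hölder and Besov classes then reduces to computing (sequential) metric entropies.

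For the \textbf{upper bound} under absolute (Lipschitz) loss, I would bound $V_T$ by the sequential Rademacher complexity $\mathfrak{R}_T(\mathcal{F})$ and apply sequential chaining:
\[
\mathfrak{R}_T(\mathcal{F}) \lesssim \inf_{\gamma \in [0,1]} \Big\{\gamma T + \sqrt{T}\int_\gamma^1 \sqrt{\log N_{\text{seq}}(\epsilon,\mathcal{F},T)}\,d\epsilon\Big\}.
\]
Plugging in $\log N_{\text{seq}} \lesssim \epsilon^{-\alpha}$, the chaining integral converges at $0$ when $\alpha<2$, yielding the parametric rate $\sqrt{T}$; when $\alpha>2$ it diverges and balancing $\gamma T$ with $\sqrt{T}\gamma^{1-\alpha/2}$ forces the cutoff $\gamma \asymp T^{-1/\alpha}$, producing the rate $T^{1-1/\alpha}$. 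For square loss, the improvement comes from replacing the standard sequential Rademacher by the \emph{offset} version $\mathbb{E}\sup_{f \in \mathcal{F}}\{2\sum_t \epsilon_t f(x_t) - c\sum_t f(x_t)^2\}$, which captures the self-bounding property of the quadratic. The corresponding offset chaining bound has the form $\inf_\gamma \{T\gamma^2 + \gamma^{-\alpha}\}$ up to logarithmic factors; the balance $\gamma \asymp T^{-1/(2+\alpha)}$ yields $T^{1-2/(2+\alpha)}$ when $\alpha \leq 2$, while for $\alpha>2$ the quadratic penalty no longer dominates the entropy term and the rate reverts to $T^{1-1/\alpha}$.

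For the \textbf{lower bound}, I would apply Yao's principle together with an Assouad-type construction: fix a maximal $\epsilon$-packing of $\mathcal{F}$ of size $M \asymp \exp(c\,\epsilon^{-\alpha})$, draw a target $f^\star$ uniformly from it, and let the stochastic adversary generate labels $y_t = f^\star(x_t) + \xi_t$ with $\xi_t$ independent noise. A sequential Fano argument then forces any predictor to incur expected per-round suboptimality of order $\epsilon$ on a constant fraction of rounds, and optimizing $\epsilon$ to balance the packing cardinality against $T$ reproduces the stated rates in each regime. Under square loss, the variance-aware (local) Fano inequality is what sharpens the lower bound to $T^{1-2/(2+\alpha)}$ in the fast regime by exploiting that the Bayes risk under quadratic loss scales like $\epsilon^2$ rather than $\epsilon$.

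Finally, for Hölder balls $\C^s(\X)$ and Besov balls $B^s_{pq}(\X)$ with $s>d/p$, the identity $\alpha = d/s$ follows from the classical Birman--Solomyak / Kolmogorov entropy estimates $\log N(\epsilon, \mathcal{F}, L_\infty) \asymp \epsilon^{-d/s}$. The main obstacle is that the \emph{sequential} entropy can in principle exceed the classical $L_\infty$ entropy, so one must verify that the two scalings coincide for these spaces. This is handled by observing that compactly supported Hölder/Besov balls are totally bounded in $L_\infty$ under the condition $s>d/p$, so that the sequential $L_\infty$-cover on any binary tree of depth $T$ is controlled by an ordinary $L_\infty$-net of the same radius evaluated at the $O(2^T)$ tree nodes via a union bound, contributing only lower-order logarithmic factors (sometimes surfacing as the $\varepsilon$ slack in earlier entries of Table~\ref{table:rates_all}). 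Combining the matched chaining upper and Fano lower bounds with this entropy computation yields the table.
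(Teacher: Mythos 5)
The paper does not actually prove this proposition: it is stated verbatim as a citation of \cite{rakhlin2015online} and serves only as a reference point for the minimax rates claimed elsewhere (e.g.\ after Theorem~\ref{theorem:regret_besov} and Corollary~\ref{cor:regret_holder}). Your sketch is therefore not competing with an in-paper argument but reconstructing the cited one, and it does so correctly: the upper bounds via sequential symmetrization plus Dudley-type chaining (with the offset Rademacher complexity supplying the improvement $T^{1-2/(2+\alpha)}$ for square loss when $\alpha\le 2$, and the entropy term taking over to give $T^{1-1/\alpha}$ when $\alpha>2$), the lower bounds via a static stochastic adversary with a Fano/Assouad packing argument (which legitimately lower-bounds the adversarial value), and the entropy computation $\log N(\epsilon,\cdot,L_\infty)\asymp\epsilon^{-d/s}$ for H\"older and Besov balls with $s>d/p$ are exactly the ingredients of the Rakhlin--Sridharan analysis. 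One small imprecision: to compare sequential and classical entropy you do not need a union bound over the $O(2^T)$ nodes of the tree (which would cost a factor $T$ inside the logarithm); a uniform $L_\infty$-net of the ball is \emph{itself} a sequential cover at the same scale, since its elements are functions independent of the path, so $N_{\mathrm{seq}}(\epsilon,\mathcal F,T)\le N(\epsilon,\mathcal F,\|\cdot\|_\infty)$ with no loss at all. The matching direction (that the sequential complexity is not smaller) is supplied by your static packing construction, so the identification $\alpha=d/s$ is complete. As a sketch the proposal is sound; turning it into a full proof would of course require the martingale minimax-swap and the offset-chaining machinery of the cited work, which you correctly identify but do not re-derive.
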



\end{document}